%The SubmissionToJussieu file is exactly the same mathematical content as the SubmissionToJussieu, but formatted for arXiv

\documentclass[10pt]{amsart}
\usepackage{amsthm}
\usepackage{graphicx}
\usepackage{multicol,multirow}
\usepackage{amsmath,amssymb,amsfonts, amsthm}
\usepackage{latexsym}
\usepackage{xcolor}
\usepackage{enumitem}
\usepackage{tikz-cd}
\usepackage{mathtools}
\usepackage{hyperref}
\usepackage{comment}
\usepackage{geometry}

\geometry{
 a4paper,
 total={210mm,297mm},
 left=30mm,
 right=30mm,
 top=40mm,
 bottom=40mm,
 }
%Makes the links look normal
\hypersetup{
    colorlinks=true,
	allcolors=black,
	linkbordercolor={1 1 1}
}

% Standard varieties
\newcommand{\A}{\mathbb{A}} % Affine space
 % Projective space
\DeclareMathOperator{\Gr}{Gr} % Grassmannian

% Standard rings and fields
\newcommand{\N}{\mathbb{N}}
\newcommand{\Z}{\mathbb{Z}}

\newcommand{\R}{\mathbb{R}}
\newcommand{\C}{\mathbb{C}}
\newcommand{\F}{\mathbb{F}}
\newcommand{\K}{\mathrm{K}}

\newcommand{\HH}{\mathbb{H}}
\newcommand{\kbar}{\overline{k}}

% General category theory

% Categorical constructions

% Infinity categories

% Miscellaneous commands
\DeclareMathOperator{\Spec}{Spec}

\newcommand{\xto}{\xrightarrow}
\newcommand{\Gal}{\mathrm{Gal}}

\newcommand{\ra}{\rightarrow}
\newcommand{\lra}{\longrightarrow}

% Symbols from motivic homotopy theory
 % Category of linear varieties
\newcommand{\Var}{\mathrm{Var}} % Category of varieties
\newcommand{\Sym}{\mathrm{Sym}} % Category of symmetrisable varieties
 %Category of cellular varieties
\newcommand{\Et}{\mathrm{\acute{E}t}} % Category of étale varieties
\newcommand{\EtLin}{\mathrm{\acute{E}tLin}} % Category of étale linear varieties
\newcommand{\chimotc}{\chi_c^{\mathrm{mot}}} % Motivic Euler characteristic
\newcommand{\chimot}{\chi_c^{\mathrm{mot}}} % Motivic Euler characteristic
\newcommand{\grvar}[1]{\K_0(\Var_{#1})} % Grothendieck ring of varieties over field #1
\newcommand{\grvark}{\grvar{k}} % Grothendieck ring of varieties over field k

\DeclareMathOperator{\GW}{GW} % Grothendieck-Witt ring
\DeclareMathOperator{\SH}{\mathbf{SH}} % stable motivic homotopy category
\let\H\relax % Whatever \H previously did, we redefine it as follows:
\DeclareMathOperator{\H}{\mathbb{H}} % unstable motivic homotopy category
% Environments
\newtheorem{thm}{Theorem}[section]
\newtheorem{cor}[thm]{Corollary}
\newtheorem{lemma}[thm]{Lemma}

\newtheorem{propn}[thm]{Proposition}

\theoremstyle{definition}
\newtheorem{example}[thm]{Example}

\newtheorem{defn}[thm]{Definition}

\newtheorem{rem}[thm]{Remark}

\begin{document}

    \title[Euler characteristics of symmetric powers of cellular varieties]{Compactly supported $\A^1$-Euler characteristics of symmetric powers of cellular varieties}

    \author{Jesse Pajwani}
    \address{School of Mathematics, School of Mathematics, University of Bristol, Bristol, BS8
1TW, UK, and the Heilbronn Institute for Mathematical Research, Bristol, UK}\email{jesse.pajwani@bristol.ac.uk} 
    
    \author{Herman Rohrbach}
    \address{Fakultät für Mathematik, Universität Duisburg-Essen, Essen, Germany 45127}
    \email{hermanrohrbach@gmail.com}
   
    \author{Anna M. Viergever}
    \address{Fakult\"at f\"ur Mathematik, Leibniz Universit\"at Hannover, Hannover, Germany 30167}\email{viergever@math.uni-hannover.de}

    \keywords{motivic homotopy theory, refined enumerative geometry, Euler characteristics, symmetric powers, cellular varieties}

    \begin{abstract}The compactly supported $\mathbb{A}^1$-Euler characteristic, introduced by Hoyois and later refined by Levine and others, is an analogue in motivic homotopy theory of the classical Euler characteristic of complex topological manifolds.
    It is an invariant on the Grothendieck ring of varieties $\mathrm{K}_0(\mathrm{Var}_k)$ taking values in the Grothendieck-Witt ring $\mathrm{GW}(k)$ of the base field $k$.
    The former ring has a natural power structure induced by symmetric powers of varieties.
    In a recent preprint, the first author and P{\'a}l construct a power structure on $\mathrm{GW}(k)$ and show that the compactly supported $\mathbb{A}^1$-Euler characteristic respects these two power structures for $0$-dimensional varieties, or equivalently {\'e}tale $k$-algebras.
    In this paper, we define the class $\mathrm{Sym}_k$ of \emph{symmetrisable varieties} to be those varieties for which the compactly supported $\mathbb{A}^1$-Euler characteristic respects the power structures and study the algebraic properties of the subring $\mathrm{K}_0(\mathrm{Sym}_k)$ of symmetrisable varieties.
    We show that it includes all cellular varieties, and even linear varieties as introduced by Totaro.
    Moreover, we show that it includes non-linear varieties such as elliptic curves.
    As an application of our main result, we compute the compactly supported $\mathbb{A}^1$-Euler characteristics of symmetric powers of Grassmannians and certain del Pezzo surfaces.\end{abstract}
    
\setcounter{tocdepth}{1}

\maketitle
\tableofcontents

\section{Introduction}
    \label{section:introduction}

The compactly supported $\A^1$-Euler characteristic $\chimot$ was first introduced in work of Hoyois \cite{hoyois14-lefschetz}, later refined by Levine \cite{levine20-aspects} for smooth projective schemes and extended to general varieties over a field in characteristic zero by Arcila-Maya, Bethea, Opie, Wickelgren and Zakharevich \cite{wickelgren20-euler} and R{\"o}ndigs \cite{Rondigs}, and to general varieties in characteristic not $2$ by Levine, Pepin-Lehalleur and Srinivas in \cite{levine24-hypersurfaces}. 
It is an algebro-geometric invariant that refines both the real and complex Euler characteristic of topological manifolds, as well as some additional arithmetic data. 
As opposed to the classical Euler characteristic, which takes values in $\Z$, the compactly supported $\A^1$-Euler characteristic takes values in the Grothendieck-Witt ring $\GW(k)$ of the base field $k$, so it contains ``quadratic'' information. 
However, unlike the classical Euler characteristic, it can be difficult to compute $\chimotc(X)$ even when $X$ is a smooth projective variety. 
Papers such as \cite{levine24-hypersurfaces} and \cite{viergever2023quadratic} use the motivic Gau{\ss}-Bonnet Theorem of Levine-Raksit \cite{levine20-gaussbonnet} to compute the compactly supported $\A^1$-Euler characteristic of hypersurfaces in $\mathbb{P}^n$ and complete intersections of hypersurfaces of the same degree in $\mathbb{P}^n$, and Brazelton, McKean and Pauli computed the compactly supported $\A^1$-Euler characteristics of Grassmannians in \cite{brazelton23-bezoutians}, using $\mathbb{A}^1$-degrees. 
While this invariant can be difficult to work with, it has found use in enumerative geometry since it is analogous to the classical Euler characteristic of a manifold. 
We may use this invariant to obtain enumerative geometry counts which take values in $\GW(k)$ and papers such as \cite{pajwani22-YZ} by the first author and P\'al, and \cite{blomme2024bitangents} by Blomme, Brugall\'e and Garay, use the compactly supported $\A^1$-Euler characteristic to obtain arithmetic refinements of results in complex enumerative geometry, the first over a general base field and the second over the real numbers.
\medskip

This paper is concerned with the compactly supported $\A^1$-Euler characteristic of symmetric powers of varieties, which can be viewed as moduli spaces of effective zero-cycles.
These geometric objects are closely related to Hilbert schemes of points via the birational Hilbert-Chow morphism. They are of particular interest to people studying enumerative geometry, appearing for example in the G{\"o}ttsche formula for Euler characteristics of Hilbert schemes of surfaces (\cite[Theorem 0.1]{goettsche90-formula}, \cite[Corollary 8.18]{pajwani22-YZ}). 
Since symmetric powers of a variety $X$ are almost always singular if $\mathrm{dim}(X)\geq 2$, we cannot directly apply the motivic Gau{\ss}--Bonnet theorem of \cite{levine20-gaussbonnet} to them, and as such their compactly supported $\A^1$-Euler characteristics seem difficult to compute directly. However, symmetric powers of a variety furnish an additional structure on $\K_0(\mathrm{Var}_k)$, known as a \emph{power structure} (see Definition $\ref{def:power-structure}$).
Therefore, we instead use the results of \cite{pajwani23-powerstructures} to utilise a power structures defined on $\mathrm{GW}(k)$ in order to compute the motivic Euler characteristics of symmetric powers.
We give a formula for the compactly supported $\A^1$-Euler characteristic of symmetric powers of a class of varieties that we call \emph{$\K_0$-{\'e}tale linear}. Informally, $\K_0$-étale linear varieties are varieties whose class in $\grvark$  decomposes into a sum with terms $[\A^n_L]$, where $L/k$ is a finite separable extension (see Definition $\ref{def:etale-linear}$). These form a class of varieties containing many widely studied varieties, such as cellular varieties (Lemma $\ref{lem:cellularetlin}$), del Pezzo surfaces of degree $\geq 5$ (Theorem $\ref{thm:del-pezzo-deggeq5-symmetrisable}$), certain tori (Example $\ref{examples:k0etlin}$) and others. 
Our main result can be stated as follows:

\begin{thm}[Theorem $\ref{theorem:euler-char-symmetric-power-linear-variety}$]
    Let $X$ be a $\K_0$-étale linear variety over field $k$ of characteristic $\neq 2$ (see Definition $\ref{def:etale-linear}$), and for $n\in\mathbb{Z}_{\geq 0}$, write $X^{(n)} := \Sym^n(X)$. 
    Then $\chimotc(X^{(n)}) = a_n(\chimotc(X))$ for every $n$, where $a_n$ denotes the function defining the power structure on $\GW(k)$ as in Definition $\ref{def:power-structure-gw}$.
\end{thm}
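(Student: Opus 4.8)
The plan is to reduce the statement to the known 0-dimensional case via a dévissage argument using the additivity of $\chimot_c$ and the multiplicativity of the power structure. The key structural fact is that $\chimot_c$ is a ring homomorphism $\grvark \to \GW(k)$, and that both $\grvark$ (via symmetric powers) and $\GW(k)$ (via the $a_n$) carry power structures; the theorem asserts that $\chimot_c$ is a morphism of power-structure-equipped rings, but only when restricted to the subring generated by $\K_0$-étale linear classes. By definition of $\K_0$-étale linear, the class $[X] \in \grvark$ decomposes as a finite sum $\sum_i [\A^{n_i}_{L_i}]$ for finite separable extensions $L_i/k$. Since $\chimot_c$ respects the power structures already on 0-dimensional classes (the Pajwani–Pál result quoted in the abstract), and $[\A^1_k]$ has $\chimot_c(\A^1_k) = \langle -1 \rangle$ or the appropriate Lefschetz-type class, the first reduction is to show $\A^n_L$-classes are ``symmetrisable'' and that symmetrisability is preserved under the sum operation governing power structures.

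**First I would** recall the formalism of power structures: a power structure on a commutative ring $R$ assigns to a power series $A(t) = 1 + \sum_{i \geq 1} a_i t^i$ and an element $r \in R$ a new power series $(A(t))^r$, compatibly with the ring operations, so that in particular $(A(t))^{r+s} = (A(t))^r (A(t))^s$ and $(A(t))^{r}$ for $r = [X]$ has $i$-th coefficient $[\Sym^i(X)]$ when one feeds in $A(t) = (1-t)^{-1} = \sum [\Sym^i(\mathrm{pt})] t^i$; this is exactly the content encoded by the functions $a_n$ in Definition~\ref{def:power-structure-gw}. The statement $\chimot_c(X^{(n)}) = a_n(\chimot_c(X))$ is then, after assembling over all $n$ into generating functions, the assertion that $\chimot_c$ intertwines the two power structures on the element $[X]$. **Next I would** invoke the multiplicativity of power structures in the exponent: if the intertwining holds for classes $\alpha, \beta \in \grvark$ then it holds for $\alpha + \beta$, because $\Sym^n(X \sqcup Y) = \bigsqcup_{i+j=n} \Sym^i(X) \times \Sym^j(Y)$ on the geometric side, $\chimot_c$ is additive and multiplicative, and $a_n$ satisfies the analogous convolution identity by construction of the power structure on $\GW(k)$. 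Hence it suffices to treat a single building block $[\A^m_L]$.

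**The reduction to building blocks** then proceeds in two further steps. First, handle the field extension: for $L/k$ finite separable, $\Sym^n_k(\Spec L)$ and more generally $\Sym^n_k(\A^m_L)$ should be computed via a transfer/restriction-of-scalars identity, and the compatibility of $\chimot_c$ with the trace $\GW(L) \to \GW(k)$ together with the corresponding compatibility of the power structures is precisely (the engine behind) the Pajwani–Pál theorem for étale algebras; so the case $[\Spec L]$ is already known, and one leverages it. Second, handle the affine factor $\A^m$: here I would use that $[\A^m_L] = [\A^1_k]^m \cdot [\Spec L]$ (or rather $\mathbb{L}^m [\Spec L]$ where $\mathbb{L} = [\A^1_k]$), that $\chimot_c(\mathbb{L}) = \langle 1 \rangle + \langle -1 \rangle =: \HH$ the hyperbolic form — wait, more precisely $\chimot_c(\A^1) = -\langle -1\rangle$ in the compactly supported normalisation — and crucially that multiplication by $\mathbb{L}$ interacts predictably with the power structure. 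The behaviour of symmetric powers under multiplication by $\mathbb{L}$ is governed by the \emph{pre-}$\lambda$-ring / power-structure compatibility that $\Sym^n(\mathbb{L} \cdot X)$ relates to $\mathbb{L}^n \cdot \Sym^n(X)$ only up to lower-order corrections, so this is not literally multiplicative; the honest statement is that $a_n$ and the geometric $\Sym^n$ both satisfy the same ``$\mathbb{L}$-twisted'' recursion, which is where the cellular/linear structure of $\A^m$ pays off — one stratifies $\A^m$ and reduces again to points via additivity.

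**The main obstacle** I anticipate is exactly this last point: controlling $\chimot_c(\Sym^n(\A^m_L))$, or equivalently showing that the ``affine'' classes are symmetrisable and that symmetrisability propagates through the power-structure operations beyond mere additivity. Additivity of $\chimot_c$ handles disjoint unions cleanly, but symmetric powers of a sum mix the factors via products $\Sym^i \times \Sym^j$, so one genuinely needs that $\K_0(\Sym_k)$ — the subring of $\grvark$ on which $\chimot_c$ respects power structures — is closed under both addition \emph{and} the relevant multiplicative interactions, i.e. that it is a sub-power-structure-ring. Establishing that $[\A^1_k]$ lies in this sub-ring and that the sub-ring contains all $[\A^m_L]$ (hence all $\K_0$-étale linear classes) is, I expect, the technical heart; it presumably requires either an explicit generating-function identity matching $\sum_n \chimot_c(\Sym^n(\A^m_L)) t^n$ against $(A_0(t))^{\chimot_c(\A^m_L)}$ where $A_0(t) = (1-t)^{-1}$, proved by induction on $m$ using a Białynicki-Birula-type or affine-bundle stratification of $\Sym^n(\A^m)$, or an abstract argument that the symmetrisable classes form a $\lambda$-subring and that $\mathbb{L}$ is symmetrisable because $\A^1$ is cellular with a single nontrivial cell. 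Once $[\A^m_L] \in \K_0(\Sym_k)$ is established, the theorem follows formally from the definition of $\K_0$-étale linear and closure under sums.
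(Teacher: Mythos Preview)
Your overall architecture is exactly the paper's: reduce to building blocks $[\A^m_L] = [\A^1]^m[\Spec L]$, use that symmetrisability is closed under sums (via $\Sym^n(X\sqcup Y)=\bigsqcup_{i+j=n}\Sym^i(X)\times\Sym^j(Y)$), invoke Pajwani--P\'al for $[\Spec L]$, and then deal with the affine factor. Where you go astray is precisely at the step you flag as the ``main obstacle''. You write that $\Sym^n(\mathbb{L}\cdot X)$ relates to $\mathbb{L}^n\cdot\Sym^n(X)$ ``only up to lower-order corrections''. This is false, and its falsity is the entire content of the paper's key lemma. G\"ottsche's lemma (the paper's Lemma~\ref{lemma:motivic-invariance-symmetric-powers} and Corollary~\ref{cor:motivic-invariance-symmetric-powers}) gives the \emph{exact} identity
\[
[(X\times\A^l)^{(n)}] = [X^{(n)}]\cdot[\A^{ln}]
\]
in $\grvark$, proved by stratifying $X^{(n)}$ by partition type and showing the projection $(X\times\A^1)^{(n)}\to X^{(n)}$ restricts over each stratum to a Zariski locally trivial rank-$n$ vector bundle. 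Taking $X=\Spec k$ yields $[(\A^m)^{(n)}]=[\A^{mn}]$ on the nose, so $\chimotc((\A^m)^{(n)})=\langle(-1)^{mn}\rangle=a_n(\langle(-1)^m\rangle)$ and $\A^m$ is symmetrisable with no recursion or correction terms. Your instinct that an ``affine-bundle stratification of $\Sym^n(\A^m)$'' is the right tool is correct; you just did not commit to it and instead hedged toward a harder $\lambda$-ring argument that is not needed.

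A smaller point: $\chimotc(\A^1)=\langle-1\rangle$, not $\HH$ and not $-\langle-1\rangle$. This matters because the check $a_n(\langle(-1)^m\rangle)=\langle(-1)^{mn}\rangle$ (which is Lemma~\ref{lem:an-1} in the paper) is what closes the computation for $\A^m$. Finally, to go from ``$\A^m$ is symmetrisable'' and ``$\Spec L$ is symmetrisable'' to ``$\A^m_L$ is symmetrisable'' you need more than closure under sums: you need that $\K_0(\Sym_k)$ is a $\K_0(\Et_k)$-module (Corollary~\ref{cor:sym-is-etale-submodule}), which the paper proves separately from the Pajwani--P\'al zero-dimensional result by a Witt-invariant argument. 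You gesture at this with ``transfer/restriction-of-scalars'' but do not isolate it as a step.
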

The power of the above theorem lies in the fact that it is much easier to work with the power structure on $\GW(k)$ than it is to decompose the symmetric powers of $\K_0$-étale linear varieties in general. 

\medskip

 We say a variety $X$ is \emph{symmetrisable} if $\chimotc$ respects the power structure as in our main result, i.e., if $\chimotc(X^{(n)}) = a_n(\chimotc(X))$ for all $n$, see Definition \ref{defn:symmetrisable-variety}. Corollary \ref{nonsmoothcurves} and Lemma \ref{cor:positive-genus-not-etale-linear} show that the class of symmetrisable varieties contains curves of genus $1$ and that these are not $\K_0$-étale linear. 

\begin{thm}[Corollary $\ref{nonsmoothcurves}$ and Lemma $\ref{cor:positive-genus-not-etale-linear}$]
    Let $C$ be a curve of genus $1$, and let $k$ be a field of characteristic $0$. Then $C$ is symmetrisable, but it is not $\K_0$-étale linear.
\end{thm}

Additionally, we show in Theorem \ref{thm:basechangeodd} that a variety over $k$ must itself be symmetrisable if it becomes symmetrisable after base change to a finite extension $L/k$ of odd degree. We use this to show that even dimensional Severi--Brauer varieties are symmetrisable in Corollary \ref{cor:evensev} even though they may not be $\K_0$-étale linear. 
While we only show that curves of genus $\leq 1$ are symmetrisable, in \cite[Proposition 26]{broering24-curves}, Bröring and the third author show that all curves are symmetrisable using different techniques. Moreover, in \cite[Theorem 8.3, Theorem 8.6]{BroringTorus}, it is shown that for any smooth projective variety, $\chimotc(X^{(n)})=a_n(\chimotc(X))$ whenever $n \leq 3$, and the smooth projective assumption can be removed in characteristic $0$ (see \cite[Theorem 8.9]{BroringTorus}).

We apply our main result in Theorem \ref{thm:diagonal-cubic-surface-symmetrisable} to compute $\chimotc(X^{(3)})$ for $X$ a specific cubic surface; a computation which we believe would be difficult to do without using the power structure.
Similarly, we use it to compute a generating series for $\chimotc$ of the symmetric powers of a Grassmannian.

\begin{thm}[Corollary \ref{cor:generating-series-grassmannian}]
There is a generating series for the compactly supported $\A^1$-Euler characteristic of the symmetric power of a Grassmannian:
$$
\sum_{n=0}^\infty \chi_c^{\mathrm{mot}}(\mathrm{Gr}(d,r)^{(n)})t^n = (1-t)^{-e(d,r)} (1- (\langle -1 \rangle t))^{-o(d,r)} \in \GW(k)[[t]],
$$
where $e(d,r)$ is the $d$-th entry in the $r$-th row of Losanitsch's triangle, and $o(d,r)$ is given by $\binom{r}{d} - e(d,r)$.
\end{thm}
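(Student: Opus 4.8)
The plan is to combine three ingredients: the main theorem (Theorem \ref{theorem:euler-char-symmetric-power-linear-variety}), the known cellular decomposition of the Grassmannian, and the explicit formula for the power structure on $\GW(k)$ from Definition \ref{def:power-structure-gw}. First I would recall that $\Gr(d,r)$ is a cellular variety, with cells indexed by partitions fitting inside a $d\times(r-d)$ box (equivalently, by $d$-subsets of $\{1,\dots,r\}$); hence by Lemma \ref{lem:cellularetlin} it is $\K_0$-étale linear, and in fact $[\Gr(d,r)] = \sum_{\lambda} [\A^{|\lambda|}_k]$ in $\grvark$. Passing to $\chimotc$ and using $\chimotc(\A^n_k) = \langle 1\rangle$ if $n$ is even and $\langle -1\rangle$ if $n$ is odd (the compactly supported Euler characteristic of affine space), we get $\chimotc(\Gr(d,r)) = e(d,r)\langle 1\rangle + o(d,r)\langle -1\rangle$, where $e(d,r)$ counts the Schubert cells of even dimension and $o(d,r)$ those of odd dimension. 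The combinatorial identification of $e(d,r)$ with the entry of Losanitsch's triangle is exactly the statement that the number of $d$-subsets of an $r$-set with even "size" statistic is that entry; I would cite the relevant combinatorial fact (the generating function $\prod$ argument, or the known OEIS/Losanitsch description) rather than reprove it.

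Next, by Theorem \ref{theorem:euler-char-symmetric-power-linear-variety}, $\chimotc(\Gr(d,r)^{(n)}) = a_n\big(\chimotc(\Gr(d,r))\big) = a_n\big(e\langle 1\rangle + o\langle -1\rangle\big)$ with $e = e(d,r)$, $o = o(d,r)$. The key step is then to evaluate the generating series $\sum_{n\ge 0} a_n(e\langle 1\rangle + o\langle -1\rangle)\, t^n$. Here I would use the defining multiplicativity property of the power structure: for a sum of classes, $\sum_n a_n(\alpha+\beta)t^n = \big(\sum_n a_n(\alpha)t^n\big)\cdot\big(\sum_n a_n(\beta)t^n\big)$, and for the two basic generators one has $\sum_n a_n(\langle 1\rangle)t^n = (1-t)^{-1}$ and $\sum_n a_n(\langle -1\rangle)t^n = (1-\langle -1\rangle t)^{-1}$ — these are precisely the normalisations built into Definition \ref{def:power-structure-gw} (the power structure on $\GW(k)$ is designed so that $\langle 1\rangle$ behaves like a point and $\langle -1\rangle$ like the class of $\A^1$, whose symmetric powers $\Sym^n(\A^1) = \A^n$ contribute $\langle -1\rangle^n$). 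Since $e\langle 1\rangle$ is an $e$-fold sum of $\langle 1\rangle$ and $o\langle -1\rangle$ an $o$-fold sum of $\langle -1\rangle$, multiplicativity gives
\[
\sum_{n=0}^{\infty}\chimotc(\Gr(d,r)^{(n)})\,t^n = (1-t)^{-e(d,r)}\,(1-\langle -1\rangle t)^{-o(d,r)},
\]
which is the claimed formula.

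I expect the main obstacle to be purely bookkeeping on the combinatorial side: namely pinning down precisely that the even-dimensional Schubert cell count equals the Losanitsch-triangle entry $e(d,r)$ and that $o(d,r) = \binom{r}{d} - e(d,r)$ (the total number of cells being $\binom{r}{d} = \dim H^*(\Gr(d,r))$). This is a standard but slightly fiddly fact about the distribution of the statistic $|\lambda| \bmod 2$ over partitions in a rectangle; I would handle it via the $q$-binomial coefficient $\binom{r}{d}_q = \sum_\lambda q^{|\lambda|}$ evaluated at $q=-1$, since $\binom{r}{d}_{-1} = e(d,r) - o(d,r)$ recovers exactly the Losanitsch-type count, together with $\binom{r}{d}_1 = e(d,r)+o(d,r)$. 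Everything else — cellularity of the Grassmannian, $\K_0$-étale linearity, the values of $\chimotc$ on affine spaces, and the multiplicativity and normalisation of the power structure on $\GW(k)$ — is either classical or already established earlier in the paper, so the proof is essentially an assembly of these pieces.
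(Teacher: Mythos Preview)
Your proposal is correct and follows essentially the same route as the paper: establish that $\Gr(d,r)$ is $\K_0$-\'etale linear, compute $\chimotc(\Gr(d,r)) = e(d,r)\langle 1\rangle + o(d,r)\langle -1\rangle$, apply Theorem~\ref{theorem:euler-char-symmetric-power-linear-variety}, and then identify the generating series. The differences are minor bookkeeping: the paper derives $\chimotc(\Gr(d,r))$ via the recursion coming from the closed immersion $\Gr(d-1,r-1)\hookrightarrow\Gr(d,r)$ and matches it against the Losanitsch recurrences, whereas you go through the Schubert cell count and the $q$-binomial at $q=-1$; and the paper first proves the explicit coefficient formula $\chimotc(\Gr(d,r)^{(n)}) = \sum_i \binom{e+i-1}{i}\binom{o+n-i-1}{n-i}\langle(-1)^{n-i}\rangle$ (via Lemma~\ref{lemma:power-structure-on-basic-classes}) and then recognises the product of Taylor series, whereas you use the convolution identity $a_n(\alpha+\beta)=\sum_i a_i(\alpha)a_{n-i}(\beta)$ directly at the level of generating series together with $a_n(\langle\pm 1\rangle)=\langle(\pm 1)^n\rangle$. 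Your route is slightly more economical since it bypasses the intermediate closed formula, but both arguments are the same in substance.
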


The above result enriches the generating series of the classical Euler characteristic of symmetric powers of complex Grassmannians, as the rank map $\GW(k) \ra \Z$ sends the form $\langle -1 \rangle$ to $1$ and the sum $e(d,r) + o(d,r)$ is the binomial coefficient $\binom{r}{d}$. 
Applying the sign map $\langle -1 \rangle \mapsto -1$ to the formula of Theorem \ref{thm:euler-char-grassmannian} yields $0$ if $r$ is even and $d$ is odd and $\binom{\lfloor r/2 \rfloor}{\lfloor d/2 \rfloor}$ otherwise, which is precisely the classical Euler characteristic of the real Grassmannian $\Gr(d,r)/\R$, in concordance with \cite[Remark 2.3.1]{levine20-aspects}.

\medskip

In Section $\ref{section:preliminaries}$, we recall notions required for our paper. We first restate the definition of the compactly supported $\A^1$-Euler characteristic in Definition $\ref{defn:chimotc}$. To compute the compactly supported $\A^1$-Euler characteristics of symmetric powers of varieties, we use the notion of a power structure on a ring, see Definition \ref{def:power-structure}. We recall the existence of natural power structures on both $\grvark$ and $\GW(k)$ following \cite{Gusein-Zade06} and \cite{pajwani23-powerstructures}. We introduce the notion of a $\K_0$-étale linear variety in Section $\ref{subsection:linear-varieties}$ (Definition $\ref{def:etale-linear}$), and prove some of their basic properties. Section $\ref{section:symmetrisable-varieties}$ is concerned with proving the main theorem of this paper, using G\"ottsche's lemma for symmetric powers \cite[Lemma 4.4]{goettsche01-hilbertscheme}.
Section \ref{section:computations} then uses the main result to compute the Euler characteristics of Grassmannians and a sizeable class of del Pezzo surfaces.
Finally in Section $\ref{section: curves}$, we turn our attention to varieties which do not become $\K_0$-étale linear over any field, but are nonetheless symmetrisable.

\subsection*{Notation}
Fix $k$ to be a field of characteristic $\neq 2$. For a variety $X$ over $k$, i.e. a reduced separated scheme of finite type over $k$, and $n\in\mathbb{Z}_{\geq 0}$, let $X^{(n)}$ be the $n^{\text{th}}$ symmetric power of $X$, which is the quotient of $X^n$ by the action of the symmetric group on $n$ letters permuting the coordinates.

 \subsection*{Acknowledgements}
    This work was undertaken while the first author was funded by the Marsden grant ``Rational points and Anabelian Geometry", and he wishes to thank the University of Canterbury, New Zealand. The second author was supported by the ERC through the project QUADAG.  This paper is part of a project that has received funding from the European Research Council (ERC) under the European Union's Horizon 2020 research and innovation programme (grant agreement No. 832833). 

The authors thank Marc Levine for pointing out the elegant proof of Proposition \ref{propn:abel-jacobi} and some errors in an earlier version of this preprint, as well as H. Uppal for useful conversations about del Pezzo surfaces. We also thank Margaret Bilu for helpful disucssions on the paper \cite{bilu23-quadratic}, and Ambrus Pál for helpful discussions and worked examples which agree with the main theorem. We thank Louisa Br\"oring for helpful discussions on the positive characteristic case and for many useful comments on an earlier draft of this paper. We further thank Stefan Schreieder for proofreading an earlier draft of this paper.

\section{Compactly supported \texorpdfstring{$\A^1$}{A¹}-Euler Characteristics and Symmetric Powers}
    \label{section:preliminaries}
    
In this section we recall results concerning compactly supported $\A^1$-Euler characteristics of varieties, as well as the notion of a power structure on a ring.

\subsection{The compactly supported $\A^1$-Euler characteristic}
    \label{subsection:compactly-supported-quadratic-euler-characteristic}
\begin{defn} \label{def:grothendieck-ring-of-varieties}
Let $\Var_k$ be the category of varieties over $k$. The \emph{Grothendieck ring of varieties $\grvark$} is the free abelian group generated by isomorphism classes $[X]$ of varieties $X \in \Var_k$ modulo the relation $[X] = [Z] + [X \setminus Z]$ whenever $Z \rightarrow X$ is a closed immersion, together with the multiplication given on generators by $[X][Y] = [X \times_k Y]$.
Note that $1 = [\Spec k]$ and $0 = [\emptyset]$ in $\grvark$. Denote the subring of $\K_0(\Var_k)$ which is generated by dimension $0$ varieties by $\K_0(\Et_k)$.

Following \cite[Chapter 2, \S4.4]{MotivicIntegration} and \cite[\S5]{bejleri2025symmetricpowersnullmotivic}, we also define a modified version of $\K_0(\mathrm{Var}_k)$. Let $f: X \to Y$ be a morphism of varieties. We say that $f$ is a \emph{universal homeomorphism} if for every $Y' \to Y$, the induced map $f: X \times_Y Y' \to Y'$ is a homeomorphism of the underlying topological spaces of the schemes. Let $I^{uh}_k$ denote the ideal of $\K_0(\mathrm{Var}_k)$ generated by classes of the form $[X]-[Y]$ for any pair of varieties $X,Y$ such that there exists a universal homeomorphism between $X$ and $Y$. Define $\K_0^{uh}(\mathrm{Var}_k) := \K_0^{uh}(\mathrm{Var}_k)/I^{uh}_k$. 
\end{defn}

\begin{rem}\label{rem:char0uh}
As noted in \cite[Chapter 2, Corollary 4.4.7]{MotivicIntegration}, if $k$ has characteristic $0$, then $I^{uh}_k=\{0\}$. In particular, in characteristic $0$, $\K_0(\mathrm{Var}_k)= \K_0^{uh}(\mathrm{Var}_k)$. Results later in the paper often use a strategy of pushing down to $\K_0^{uh}(\mathrm{Var}_k)$, proving the analogous result in this ring, and pulling back to $\K_0(\mathrm{Var}_k)$, so in characteristic $0$, these operations can be ignored.
\end{rem}

\begin{defn} \label{def:gw-of-field}
The \emph{Grothendieck-Witt ring of $k$}, denoted by $\GW(k)$, is the Grothendieck group completion of isometry classes of non-degenerate symmetric bilinear forms on finite dimensional $k$-vector spaces.
\end{defn} 
By \cite[\S2, Theorem 4.1]{Lam}, $\GW(k)$ is generated by elements $\langle a \rangle$ for $a\in k^\times$, which are the classes of one-dimensional forms $(x,y) \mapsto axy$, subject to the relations
\begin{enumerate}[nolistsep]
    \item $\langle a \rangle = \langle ab^2 \rangle$ for $b\in k^\times$,
    \item $\langle a \rangle \langle b \rangle = \langle ab \rangle$ for $b\in k^\times$, 
    \item $\langle a \rangle + \langle - a\rangle = \langle 1 \rangle + \langle -1 \rangle$, and
    \item $\langle a \rangle + \langle b \rangle = \langle a + b \rangle + \langle ab(a + b) \rangle$ for $b, a+b\in k^\times$.
\end{enumerate}
Define $\HH := \langle 1 \rangle + \langle -1 \rangle$, which we call the \emph{hyperbolic form}. There is a canonical homomorphism $\mathrm{rank}: \GW(k) \to \Z$, given by sending $\langle a \rangle \mapsto 1$ for all $a \in k^\times$. Note that for all $q \in \GW(k)$, $q\cdot \HH = \mathrm{rank}(q)\HH$.

To define $\chimotc$, we follow \cite[Corollary 8.7]{levine20-gaussbonnet}, \cite[Section 5.1]{levine24-hypersurfaces} and \cite[Definition 1.4, Theorem 1.13]{wickelgren20-euler}. For $X$ a smooth projective scheme over $k$ of dimension $n$, define a quadratic form $\chi^{\mathrm{Hdg}}(X)\in \GW(k)$ as follows: 
\begin{itemize}
\item If $n$ is odd, we set $\chi^{\mathrm{\mathrm{\mathrm{Hdg}}}}(X) = m\cdot H$ where 
$$m = \sum_{i+j<n}(-1)^{i+j}\dim_k(H^i(X, \Omega^j_{X/k})) - \sum_{i<j, i+j=n}\dim_k(H^i(X, \Omega^j_{X/k})).$$
\item If $n=2p$ is even, we set $\chi^{\mathrm{\mathrm{\mathrm{Hdg}}}}(X) = m\cdot H + Q$ where $Q$ corresponds to the non-degenerate symmetric bilinear form given by 
$$
H^p(X, \Omega^p_{X/k}) \otimes H^{p}(X, \Omega^{p}_{X/k}) \xto{\cup} H^n(X, \Omega^n_{X/k}) \xrightarrow{\text{Trace}} k
$$
and 
$$m = \sum_{i+j<n}(-1)^{i+j}\dim_k(H^i(X, \Omega^j_{X/k})) + \sum_{i<j, i+j=n}\dim_k(H^i(X, \Omega^j_{X/k})).$$\end{itemize}

\begin{rem}
    We note that the above quadratic form comes from the composition of cup product and trace (defined using Serre duality) which one can define on the Hodge cohomology groups $H^i(X,\Omega^j_{X/k})$ of $X$. Most of this quadratic form is hyperbolic; all of it if $n$ is odd, and everything except $Q$ if $n$ is even. 
\end{rem}

By \cite[Theorem 1.13]{wickelgren20-euler} in characteristic zero and the discussion in \cite[Section 5.1]{levine24-hypersurfaces} for more general fields, there exists a unique ring homomorphism, the \emph{compactly supported $\A^1$-Euler characteristic}
\begin{align*}
    \chi^{\mathrm{mot}}_{c,k}: \grvark \to \GW(k),
\end{align*}
such that if $X$ is a smooth projective connected variety, $\chimotc([X]) = \chi^{\mathrm{\mathrm{\mathrm{Hdg}}}}(X)$. Moreover, by \cite[Corollary 5.4]{bejleri2025symmetricpowersnullmotivic}, $\chi^{mot}_{c,k}$ is $0$ on $I^{uh}_k$, so $\chi^{mot}_{c,k}$ factors through $\K_0^{uh}(\mathrm{Var}_k)$.
\begin{defn}\label{defn:chimotc}
    For a variety $X$ over $k$, the \textit{compactly supported $\mathbb{A}^1$-Euler characteristic} $\chimot(X)\in \GW(k)$ is the image of $[X]\in \grvark$ under the above map. 
\end{defn} 

\begin{rem} 
    When the base field is clear, we will drop the subscript $k$ and simply write $\chimotc(X)$ to mean $\chi^{\mathrm{mot}}_{c,k}([X])$. In \cite{wickelgren20-euler}, this invariant is denoted by $\chi^{\mathbb{A}^1}_c$ and in \cite{levine24-hypersurfaces}, it is denoted by $\chi_c$. 
\end{rem} 
\begin{rem}
    The compactly supported $\mathbb{A}^1$-Euler characteristic carries a lot of information: if $k\subset\R$ and $X$ is a smooth projective variety over $k$, then by \cite[Remark 2.3.1]{levine20-aspects} we have that the rank of $\chimot(X)$ is equal to the topological Euler characteristic of $X(\C)$ whereas the signature of $\chimot(X)$ is equal to the topological Euler characteristic of $X(\R)$. 
\end{rem}
\begin{rem}
    For $X$ smooth and projective, the motivic Gau{\ss}--Bonnet Theorem \cite[Theorem 1.3]{levine20-gaussbonnet} (which assumes for $k$ to be a perfect field, but also holds over a non-perfect base field by \cite[Remark 2.1(2)]{levine20-aspects}) implies that $\chimotc(X)$ is the \textit{quadratic Euler characteristic} of $X$. This is an invariant coming from motivic homotopy theory which was first studied by Hoyois in \cite{hoyois14-lefschetz}. One obtains this invariant by applying the categorical Euler characteristic construction as defined by Dold-Puppe \cite{dold80-categoricalEC} to the stable motivic homotopy category $\SH(k)$ introduced by Morel-Voevodsky, see \cite[Section 2]{levine20-aspects} for details. The quadratic Euler characteristic above is the motivation for the definition of the compactly supported $\A^1$-Euler characteristic in \cite{wickelgren20-euler} and \cite{levine24-hypersurfaces}. For this paper, we define $\chimotc$ in terms of Hodge cohomology for ease of use, however this invariant should be thought of as one coming from motivic homotopy theory.
\end{rem}

\subsection{Power structures}
    \label{subsection:power-structures}

In this section, we give a brief introduction to the power structures studied by Gusein-Zade, Luengo and Melle-Hern\'andez in \cite{Gusein-Zade06} and by the first author and P\'al in \cite{pajwani23-powerstructures}. Informally, a power structure on a ring $R$ is a way to make sense of the expression $f(t)^r$ for $r\in R$ and $f(t)\in 1 + tR[[t]]$, see e.g. \cite[Definition 2.1]{pajwani23-powerstructures} for a precise definition. By \cite[Proposition 1]{Gusein-Zade06}, under some finiteness assumptions it suffices to define functions $b_n:R\to R$ for $r\in R$, which should be thought of as defining $(1-t)^{-r} := \sum_{n\geq 0} b_n(r)t^n$, satisfying some conditions which we specify now. 
\begin{defn} \label{def:power-structure}
    Let $R$ be a ring. A \emph{finitely determined power structure} on $R$ is a collection of functions $b_n: R \to R$ for $n \in \Z_{\geq0}$ such that:
    \begin{enumerate}
        \item $b_n(0)=0$ and $b_n(1)=1$.
        \item $b_0(r) = 1$, $b_1(r) = r$ for all $r \in R$.
        \item $b_n(r+s) = \sum_{i=0}^n b_i(r)b_{n-i}(s)$ for all $r,s \in R$. 
    \end{enumerate}
    For the purposes of this paper, all power structures will be finitely determined. Suppose $R$ and $S$ are rings with power structures on them given by functions $b_i$ and $b_i'$ respectively, and let $f: R \to S$ be a ring homomorphism. 
    Then we say that $f$ \emph{respects the power structures} if $f(b_i(r)) = b_i'(f(r))$ for all $i > 0$ and $r \in R$.  
\end{defn}

\begin{rem} \label{rem:canonical-power-structure-var}
    Gusein-Zade, Luengo and Melle-Hern\'andez \cite[Page 3]{Gusein-Zade06} proved that there is a canonical power structure on $\grvark$, given on the level of quasiprojective varieties by functions $S_n$ such that $S_n([X]) = [X^{(n)}]$. Their paper works over base field $\mathbb{C}$, however the construction works over a general base field of characteristic zero. In characteristic $p \neq 2$ Bejleri--McKean proved in \cite[Lemma 6.7]{bejleri2025symmetricpowersnullmotivic} that the same is true in positive characteristic if we instead work in $\K^{uh}_0(\mathrm{Var}_k)$
\end{rem}

This paper is concerned with the following power structure on $\GW(k)$ from \cite[Corollary 3.26]{pajwani23-powerstructures}.

\begin{defn} \label{def:power-structure-gw}
    For $n \geq 0$, define functions $a_n: \GW(k) \to \GW(k)$ such that for $\alpha \in k^\times$
    \begin{equation*} \label{eq:power-structure-gw}
        a_n(\langle \alpha \rangle) = \langle \alpha^n \rangle + \frac{n(n-1)}{2}t_\alpha,    
    \end{equation*}
    where $t_\alpha = \langle 2 \rangle + \langle \alpha \rangle - \langle 1 \rangle - \langle 2\alpha \rangle$. Note that $t_\alpha$ is $2$-torsion in $\GW(k)$.
    These functions uniquely define a power structure on $\GW(k)$ by \cite[Corollary 3.26]{pajwani23-powerstructures}.
\end{defn}
 In fact it is shown in \cite[Corollary 3.14]{pajwani23-powerstructures} that if there is a power structure $b_n$ on $\GW(k)$ such that $\chimotc(X^{(n)}) = b_n(\chimotc(X))$ for $X=\Spec(L)$ where $L/k$ is a quadratic étale algebra, then $b_n = a_n$ for all $n$. In \cite[Corollary 3.27]{pajwani23-powerstructures}, it is deduced  that if there exists a power structure $b_n$ on $\GW(k)$ such that $\chimot(X^{(n)}) = b_n(\chimot(X))$ for all $n$ and all varieties $X$, i.e. if there exists a power structure $b_n$ on $\GW(k)$ such that if we give $\grvark$ the power structure given by symmetric powers, then $\chimot$ respects the power structures, then $b_n$ is necessarily equal to $a_n$. The power structure on $\GW(k)$ given by these $a_n$ functions is therefore of interest for computing the compactly supported $\A^1$-Euler characteristic of symmetric powers of varieties, since if $\chimot$ would respect power structures (with symmetric powers as the power structure on $\grvark$ and $a_n$ on $\GW(k)$), this would mean that $\chimotc(X^{(n)})=a_n(\chimotc(X))$ for every variety $X/k$. 
 It is currently an open question whether $\chimot$ does actually respect the power structures, however \cite[Corollary 4.30]{pajwani23-powerstructures} shows that this is true whenever $X$ is dimension $0$.
 We extend this result to $\K_0$-étale linear varieties in Theorem \ref{theorem:euler-char-symmetric-power-linear-variety}.

\begin{rem}\label{rem:talpha}
     We have that $t_\alpha = 0$ if and only if $[\alpha] \cup [2] = 0 \in H^2_{\Gal}(k, \Z/2\Z)$, so in particular, $t_{1} = t_{-1} = 0$. One way to see this, is to use that $t_\alpha = -(\langle 1\rangle - \langle 2 \rangle)(\langle 1 \rangle - \langle \alpha\rangle)$ is a product of Pfister forms, so under the isomorphism from the Milnor conjectures, we have that $t_\alpha$ is mapped to $[\alpha]\cup[2]$. Alternatively, an elementary proof of this statement is provided in \cite[Lemma 3.29]{pajwani23-powerstructures}. Therefore if $- \cup [2]$ is the zero map, then $a_n(\langle \alpha \rangle) = \langle \alpha^n \rangle$ for all $n$. In particular, the power structure defined by the $a_n$ functions will then agree with the non factorial symmetric power structure on $\GW(k)$ as defined by McGarraghy in \cite[Definition 4.1]{mcgarraghy05-symmetric}.
\end{rem}

We will use the following elementary results about this power structure later. 

\begin{lemma}\label{lem:an-1}
    Let $q \in \GW(k)$, and let $n$ be a positive integer. Then \[a_n(\langle -1 \rangle \cdot q) = \langle (-1)^n\rangle \cdot a_n(q).\]
\end{lemma}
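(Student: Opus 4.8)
The plan is to reduce to the case where $q = \langle\alpha\rangle$ is a rank-one form, $\alpha\in k^\times$, and then to bootstrap to arbitrary $q$ using the additivity axiom (3) of the power structure together with a short argument for hyperbolic classes.

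\emph{The generator case.} Here $\langle-1\rangle q = \langle-\alpha\rangle$, so Definition \ref{def:power-structure-gw} gives
\[
a_n(\langle-\alpha\rangle) = \langle(-\alpha)^n\rangle + \frac{n(n-1)}{2}\, t_{-\alpha},
\qquad
\langle(-1)^n\rangle\, a_n(\langle\alpha\rangle) = \langle(-\alpha)^n\rangle + \frac{n(n-1)}{2}\,\langle(-1)^n\rangle\, t_\alpha ,
\]
using $\langle(-1)^n\alpha^n\rangle=\langle(-\alpha)^n\rangle$. Thus it suffices to show $t_{-\alpha} = \langle(-1)^n\rangle\, t_\alpha$. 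A direct computation with relations (1)--(2) in $\GW(k)$ gives the factorisation $t_\alpha = (\langle1\rangle - \langle\alpha\rangle)\,u$ with $u := \langle2\rangle - \langle1\rangle$, and the vanishing $t_{-1}=0$ from Remark \ref{rem:talpha} translates into $\langle-1\rangle u = u$. From this, $\langle(-1)^n\rangle t_\alpha = (\langle1\rangle-\langle\alpha\rangle)\langle(-1)^n\rangle u = (\langle1\rangle-\langle\alpha\rangle)u = t_\alpha$, and likewise $t_{-\alpha} = (\langle1\rangle - \langle-1\rangle\langle\alpha\rangle)u = t_\alpha + \langle\alpha\rangle(\langle1\rangle-\langle-1\rangle)u = t_\alpha$, which settles this case.

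\emph{Bootstrapping.} Let $T\subseteq\GW(k)$ be the set of $q$ satisfying $a_n(\langle-1\rangle q) = \langle(-1)^n\rangle a_n(q)$ for all $n\ge 0$. Then $0\in T$, $\langle\alpha\rangle\in T$ for every $\alpha\in k^\times$ by the previous paragraph, and $T$ is closed under addition: if $q,q'\in T$, then axiom (3) together with $\langle(-1)^i\rangle\langle(-1)^{n-i}\rangle = \langle(-1)^n\rangle$ gives
\[
a_n\bigl(\langle-1\rangle(q+q')\bigr) = \sum_{i=0}^{n} a_i(\langle-1\rangle q)\, a_{n-i}(\langle-1\rangle q') = \langle(-1)^n\rangle \sum_{i=0}^{n} a_i(q)\, a_{n-i}(q') = \langle(-1)^n\rangle\, a_n(q+q').
\]
Hence $T$ contains the submonoid generated by the $\langle\alpha\rangle$, i.e.\ every class of the form $\sum_i\langle\alpha_i\rangle$. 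It also contains $-\mathbb{H}$: indeed $\langle-1\rangle(-\mathbb{H}) = -\mathbb{H}$, for odd $n$ the class $a_n(-\mathbb{H})$ is hyperbolic by Lemma \ref{oddhyperbolic} and so is fixed by $\langle-1\rangle$, and for even $n$ the twist is trivial. Since every element of $\GW(k)$ can be written as $q_0 - m\mathbb{H}$ with $q_0$ of the form $\sum_i\langle\alpha_i\rangle$ and $m\ge 0$ (using $-\langle b\rangle = \langle-b\rangle - \mathbb{H}$), closure under addition forces $T = \GW(k)$.

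\emph{Main obstacle.} The only delicate point is the last step: the defining identity is not manifestly preserved under subtraction, so $T$ need not be a subgroup a priori. We circumvent this by checking the identity directly on the single extra generator $-\mathbb{H}$ of $\GW(k)$ over the monoid generated by the $\langle\alpha\rangle$, which is exactly where Lemma \ref{oddhyperbolic} is used. Everything else — in particular the transformation law for the correction term $t_\alpha$ under the sign twist — is elementary once the factorisation $t_\alpha = (\langle1\rangle-\langle\alpha\rangle)(\langle2\rangle-\langle1\rangle)$ and the vanishing $t_{-1}=0$ are in hand.
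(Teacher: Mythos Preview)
Your proof is correct and follows the same two-step strategy as the paper: verify the identity on rank-one generators $\langle\alpha\rangle$, then extend using the additivity axiom for the $a_n$. The paper's proof is a terse two lines and leaves the extension step implicit; your version spells out the generator computation via the factorisation $t_\alpha = (\langle 1\rangle - \langle\alpha\rangle)(\langle 2\rangle - \langle 1\rangle)$ and is explicit about why the identity propagates to all of $\GW(k)$.

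The one place you diverge is in handling negative classes: you invoke Lemma~\ref{oddhyperbolic} to show $-\mathbb{H}\in T$ and then write every element as a sum of generators plus copies of $-\mathbb{H}$. This works, but is a detour. In fact $T$ is automatically a subgroup: if $r\in T$ and $r+s\in T$, then expanding $a_n(\langle-1\rangle(r+s))=\langle(-1)^n\rangle a_n(r+s)$ via axiom~(3), substituting the known identities for $r$, and inducting on $n$ forces $a_n(\langle-1\rangle s)=\langle(-1)^n\rangle a_n(s)$ (the $i=0$ term isolates it once the $i>0$ terms vanish by induction). So the appeal to Lemma~\ref{oddhyperbolic} is unnecessary, though certainly not wrong.
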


\begin{proof}
    First consider the case that $q = \langle \alpha \rangle$ is a one-dimensional quadratic form. First note, for all $\alpha\in k^\times$  
    \[
    \langle -\alpha\rangle + \langle 2\alpha\rangle = \langle \alpha\rangle + \langle -2\alpha^2\cdot x\rangle = \langle \alpha\rangle + \langle -2\alpha\rangle 
    \]
    and so $\langle -\alpha\rangle - \langle -2\alpha\rangle = \langle \alpha \rangle - \langle 2\alpha\rangle$. It follows that $t_{-\alpha} = t_\alpha = \langle -1\rangle t_\alpha$. Then
    \begin{align*} 
    a_n(\langle -1\rangle\langle \alpha\rangle) 
    &= a_n(\langle - \alpha\rangle)\\
    &= \langle (-\alpha)^n\rangle + \frac{n(n-1)}{2}t_{-\alpha}\\
    &= \langle (-1)^n\rangle\langle \alpha^n\rangle + \langle (-1)^n\rangle\frac{n(n-1)}{2}t_\alpha \\
    &= \langle (-1)^n\rangle a_n(\langle\alpha\rangle).
    \end{align*}
     
     The result now holds in general by the additive formulae for the $a_n$ functions.
\end{proof}

\begin{lemma} \label{lemma:power-structure-on-basic-classes}
   For all $m,n \in \N$, we have $a_n(m \langle (-1)^i \rangle) = \binom{m+n-1}{n} \langle (-1)^{in} \rangle$.
\end{lemma}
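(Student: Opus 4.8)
The plan is to handle the two parities of $i$ separately, reducing the case $i$ odd to the case $i$ even by means of Lemma \ref{lem:an-1}. For $i$ even, $\langle (-1)^i \rangle = \langle 1 \rangle$ and the assertion reads $a_n(m\langle 1 \rangle) = \binom{m+n-1}{n}\langle 1 \rangle$, which I would deduce purely from the power-structure axioms of Definition \ref{def:power-structure}.

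First observe that $\langle 1 \rangle = 1$ is the multiplicative unit of $\GW(k)$, so axiom (1) of Definition \ref{def:power-structure} gives $a_i(\langle 1 \rangle) = a_i(1) = 1$ for every $i \geq 0$. Passing to the generating series $A_q(t) := \sum_{n \geq 0} a_n(q)\, t^n \in \GW(k)[[t]]$, axiom (3) is precisely the assertion that $A_{q + q'}(t) = A_q(t)\, A_{q'}(t)$, so that
\[
A_{m\langle 1 \rangle}(t) = A_{\langle 1 \rangle}(t)^m = \Bigl( \sum_{n \geq 0} t^n \Bigr)^{\! m} = (1-t)^{-m}
\]
in $\GW(k)[[t]]$. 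Reading off the coefficient of $t^n$ from the binomial series for $(1-t)^{-m}$ gives $a_n(m\langle 1 \rangle) = \binom{m+n-1}{n}\langle 1 \rangle$. (Equivalently, one may avoid generating series and induct on $m$, using axiom (3) and the hockey-stick identity $\sum_{j=0}^n \binom{m+j-1}{j} = \binom{m+n}{n}$, with trivial base case $m \in \{0,1\}$; or one may note that this case is the statement that the $n$-th symmetric power of $m$ reduced points has $\binom{m+n-1}{n}$ points, combined with the fact that $0$-dimensional varieties are symmetrisable by \cite[Corollary 4.30]{pajwani23-powerstructures}.)

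For $i$ odd we have $\langle (-1)^i \rangle = \langle -1 \rangle$; writing $m\langle -1 \rangle = \langle -1 \rangle \cdot (m\langle 1 \rangle)$ and applying Lemma \ref{lem:an-1} gives $a_n(m\langle -1 \rangle) = \langle (-1)^n \rangle\, a_n(m\langle 1 \rangle) = \binom{m+n-1}{n}\langle (-1)^n \rangle$. Since $\langle (-1)^{in} \rangle$ equals $\langle (-1)^n \rangle$ for $i$ odd and $\langle 1 \rangle$ for $i$ even, the two cases combine into the single formula $a_n(m\langle (-1)^i \rangle) = \binom{m+n-1}{n}\langle (-1)^{in} \rangle$. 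I do not expect any genuine obstacle here: the only points needing care are the degenerate cases $m = 0$ or $n = 0$, where the binomial coefficient is read with the usual conventions and the identities checked by hand, and the formal manipulation of power series, which is legitimate because $1 - t$ is a unit in the commutative ring $\GW(k)[[t]]$. Both substantive ingredients — the computation of $A_{\langle 1 \rangle}(t)$ and the twisting formula of Lemma \ref{lem:an-1} — are already available.
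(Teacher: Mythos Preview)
Your proof is correct and uses the same two ingredients as the paper: Lemma~\ref{lem:an-1} to reduce to the case $\langle 1\rangle$, and the power-structure axioms to handle $a_n(m\langle 1\rangle)$. The difference is in packaging. The paper carries out a double induction on $m$ and $n$, invoking the hockey-stick identity twice; you instead observe that axiom~(1) gives $A_{\langle 1\rangle}(t)=(1-t)^{-1}$ and axiom~(3) makes $q\mapsto A_q(t)$ multiplicative, so $A_{m\langle 1\rangle}(t)=(1-t)^{-m}$ and the binomial coefficient drops out of the Taylor expansion. This is cleaner: it requires only the single induction on $m$ implicit in the multiplicativity, and the hockey-stick identity is absorbed into the well-known expansion of $(1-t)^{-m}$. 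Your parenthetical remark that one could alternatively induct with hockey-stick is exactly what the paper does, so you have in fact covered both routes.
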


\begin{proof}
    By Lemma $\ref{lem:an-1}$, $a_n(m \langle (-1)^i \rangle) = \langle (-1)^{in} \rangle a_n(m \langle 1 \rangle)$, so without loss of generality assume $i=0$. The proof is by double induction on $m$ and $n$. Note that the identity holds whenever $m = 1$ or $n = 1$ by Definitions \ref{def:power-structure} and \ref{def:power-structure-gw}. 
    
    Now fix $n,m \in \N$, and assume that the identity holds for all $M,N \in \N$ such that $M \leq m$ and $N \leq n$. Then
    \begin{equation*}
        a_n((m+1)\langle 1 \rangle)
             = \sum_{i=0}^{n} a_{i}(m\langle 1 \rangle)a_{n-i}(\langle 1 \rangle) 
             = \sum_{i=0}^{n} \binom{m + i - 1}{i}
             = \binom{m+n}{n},
    \end{equation*}
    where the last equality follows from the hockey-stick identity for binomial coefficients, so the identity also holds for $n$ and $m + 1$.
    Moreover,
    \begin{align*}
        a_{n+1}(m\langle 1 \rangle)
            & = a_{n+1}((m - 1) \langle 1 \rangle) + \sum_{i=0}^{n} a_{i}((m - 1)\langle 1 \rangle)a_{n+1-i}(\langle 1 \rangle) \\
            & = a_{n+1}((m - 1) \langle 1 \rangle) + \langle 1 \rangle \binom{m-1+n}{n} \\
            & = \langle 1 \rangle \sum_{i=0}^{m-1} \binom{n+i}{n} \\
            & = \langle 1 \rangle \binom{m+n}{n+1},
    \end{align*}
    where the last equality follows from the hockey-stick identity again.
    Hence the identity also holds for $n + 1$ and $m$, which completes the proof.
\end{proof}

\begin{lemma}\label{oddhyperbolic}
    Let $m \in \Z$. Then \[
        a_n(m\HH) = \begin{cases}
        0 &\text{ if } m=0\\
            \sum_{i=0}^n \binom{m+i-1}{m-1}\binom{m+n-i-1}{m-1}\langle -1\rangle^{n-i} &\text{ if } m > 0\\
            (-1)^n \sum_{i=0}^n\binom{m}{i}\binom{m}{n-i} \langle -1\rangle ^{n-i} &\text{ if } m < 0. 
        \end{cases}
    \]
    In particular, $a_n(m\H)$ is hyperbolic if $n$ is odd. 
\end{lemma}

We note that the above lemma is not new: the case $m\geq 0$ is due to McGarraghy \cite[Corollary 4.13 and 4.14]{mcgarraghy05-symmetric} and the negative case is \cite[Lemma 23 and Lemma 25]{broering24-curves}. We give the elementary proof here for convenience of the reader. 

\begin{proof}
    We start by noting that $m\mathbb{H} = m\langle 1 \rangle + m\langle -1 \rangle$ and $t_1 = t_{-1} = 0$. For $m>0$, we see that 
    \begin{align*}
        a_n(m\H) &= \sum_{i=0}^na_i(m\langle 1\rangle)a_{n-i}(m\langle -1\rangle) \\
        &= \sum_{i=0}^n \binom{m+i-1}{m-1}\binom{m+n-i-1}{m-1}\langle (-1)^{n-i}\rangle.
    \end{align*} 
    where we use Lemma $\ref{oddhyperbolic}$ to evaluate the terms $a_i(m\langle \pm1\rangle)$ appearin gin the sum. The term $\binom{m+i-1}{m-1}\binom{m+n-i-1}{m-1}$ is symmetric in the transformation $i\mapsto n-i$, so for $n$ odd, we see that for each term $\binom{m+i-1}{m-1}\binom{m+n-i-1}{m-1}\langle -1\rangle$, there is also a $\binom{m+i-1}{m-1}\binom{m+n-i-1}{m-1}\langle 1\rangle$, implying that the resulting form is hyperbolic. 

    The case of $m<0$ can be done in a very similar way. 
\end{proof}
  %Since $m\mathbb{H} = m\langle 1 \rangle + m\langle -1 \rangle$ and $t_1 = t_{-1} = 0$, we see $a_n(m\mathbb{H}) = S_n(m\mathbb{H})$, where $S_n$ is the non factorial symmetric power structure on $\GW(k)$ as in \cite[Definition 4.1]{mcgarraghy05-symmetric} so when $m \geq 0$ this follows by \cite[Corollary 4.13]{mcgarraghy05-symmetric}. For $m < 0$,
    
    %$$
%0 = a_n(m\mathbb{H} + (-m)\mathbb{H}) = \sum_{i=0}^n a_i(m\mathbb{H})a_{n-i}((-m)\mathbb{H}),
    %$$
    %so the result follows by induction using that $q\mathbb{H} = \mathrm{rank}(q) \cdot \mathbb{H}$ for any $q \in \GW(k)$.
%
%
%
%
%

\section{\texorpdfstring{$\K_0$}{K₀}-étale linear varieties}
    \label{subsection:linear-varieties}

In this section, we define $\K_0$-étale linear varieties and show that varieties of this class generalise cellular varieties in the sense of \cite{levine20-aspects}, and linear varieties in the sense of Joshua's paper \cite{joshua01-linearvarieties}.

\begin{defn} \label{def:etale-linear}
    Let $\K^{uh}_0(\EtLin_k)$ be the subring of $\K_0^{uh}(\mathrm{Var}_k)$ generated by the images of the classes $[\A^1_k]$ and classes of the form $[\Spec L]$ where $L$ is a finite étale algebra over $k$. Define $\K_0(\EtLin_k)$ to be the preimage of $\K^{uh}_0(\EtLin_k)$ under the natural quotient map $\K_0(\mathrm{Var}_k) \to \K_0^{uh}(\mathrm{Var}_k)$. 
     We say a variety $X$ is \emph{$\K_0$-étale linear} if the class $[X] \in \grvark$ lies in $\K_0(\EtLin_k)$.
\end{defn}

Since $[\mathbb{A}^1]^n = [\mathbb{A}^n]$, we see that $\mathbb{A}^n$ is $\K_0$-étale linear. More generally $X$  is $\K_0$-étale linear if we can write $[X] = \sum_{i=0}^n m_i [\mathbb{A}^i]\cdot [\Spec(L_i)]$ where $L_i/k$ is a finite étale algebra over $k$. As in Remark $\ref{rem:char0uh}$ $\K^{uh}_0(\EtLin_k) = \K_0(\EtLin_k)$ when $\mathrm{char}(k)=0$, and therefore in characteristic $0$, this is an if and only if. In positive characteristic however $\K_0(\EtLin_k)$ is strictly larger than the subring of $\K_0(\Var_k)$ generated by the classes $[\A^1]$ and $[\Spec(L)]$ for $L/k$ a finite étale algebra.
\begin{example}\label{examples:k0etlin}
    We give examples of some $\K_0$-étale linear varieties. 
    \begin{enumerate}
      \item Since there exists an open embedding $\mathbb{A}^1 \hookrightarrow \mathbb{P}^1$ with complement $\Spec(k)$, we have that $[\mathbb{P}^1] = [\mathbb{A}^1] + [\Spec(k)] \in \grvark$, therefore $\mathbb{P}^1$ is $\K_0$-étale linear. More generally,  $\mathbb{P}^n$ is $\K_0$-étale linear since $[\mathbb{P}^n] = \sum_{i=0}^n [\mathbb{A}^i]$.
        
        \item Let $G$ be a one dimensional torus, defined by the vanishing set of the equation $x^2 - \alpha y^2 = 1 \subseteq \mathbb{A}^2$. Then $G$ admits a compactification isomorphic to $\mathbb{P}^1_k$, with complement $\Spec(L)$, where $L=k[x]/(x^2-\alpha)$. Therefore, $[G] = [\mathbb{P}^1] - [\Spec(L)]$, and since $L/k$ is a finite étale algebra, we see that $G$ is $\K_0$-étale linear. Similarly, any torus which is a product of $1$-dimensional tori is $\K_0$-étale linear.
        
      \item Consider $C = \{xy=0\} \subseteq \mathbb{A}^2$. Note that $C \setminus \{(0,0)\} \cong \mathbb{G}_{m,k} \amalg \mathbb{G}_{m,k}$, so we may write $[C] = 2[\mathbb{G}_{m,k}] + [\Spec(k)]$, so $C$ is $\K_0$-étale linear. 
      \item Let $C$ denote the curve $y^2 z = x^3 \subseteq \mathbb{P}^2_{[x:y:z]}$. We see that $
      (C \setminus \{[0:0:1]\}) \cong \mathbb{A}^1$
      via the isomorphism $[x:y:z] \mapsto \frac{x}{y}$. Therefore $[C] = [\mathbb{A}^1] + [\Spec(k)]$, so $C$ is $\K_0$-étale linear. In particular, $\K_0$-étale linear varieties do not need to have smooth irreducible components.
    \end{enumerate}
\end{example}
\begin{thm}\label{thm:nonlinear}
    Suppose $k$ is a field of characteristic $0$. Let $X/k$ be a geometrically connected variety which is $\K_0$-étale linear. Then $X$ is geometrically stably rational. 
\end{thm}
\begin{proof}
    Suppose that $X$ is not geometrically stably rational.  Let $\kbar$ be an separable closure of $k$. Suppose that $[X] \in \K_0(\EtLin_k)$. After base changing to $\kbar$, we easily see that $[X_{\kbar}] \in \K_0(\EtLin_{\kbar})$, and since $k$ has characteristic $0$, $\K^{uh}_0(\EtLin_k)=\K_0(\EtLin_k)$, and $\K_0(\EtLin_k)$ is generated by varieties of dimension $0$ and the class $[\mathbb{A}^1]$. Since $\kbar$ is separably closed, all varieties of dimension $0$ are disjoint unions of $\Spec(\kbar)$, and therefore $\K_0(\EtLin_{\kbar})$ is simply the subring of $\K_0(\mathrm{Var}_k)$ generated by $[\mathbb{A}^1]$. This question therefore reduces to the case where $k$ is separably closed.

    Let $I$ denote the ideal of $\grvark$ generated by $[\mathbb{A}^1]$. Then by a result of Larsen and Lunts \cite[Proposition 2.7]{LarsenLunts}, we have an isomorphism $\K_0(\mathrm{Var}_k)/I \cong \Z[SB]$, where $\Z[SB]$ is the ring whose underlying additive group is the free abelian group generated by stable-birational classes of connected varieties. By \cite[Proposition 2.7]{LarsenLunts}, a connected smooth projective variety $X/k$ is stably rational if and only if $[X] \in \K_0(\mathrm{Var}_k)$ is congruent to $1 \pmod{I}$. By assumption, $X$ is not stably rational, so $[X] \neq 1 \in \Z[SB]$, and $X$ connected also means $[X] \neq n$ for any $n$. This means there is no element $Y \in I$ such that $Y+n[\Spec(k)] = [X]$ for any $n \in \Z$. Since every element of $\K_0(\EtLin_{\overline{k}})$ can be written in this manner, $[X]$ is not in $\K_0(\EtLin_{\kbar})$. This gives the result.
\end{proof}

\begin{defn}
    Following \cite[Page 2189]{levine20-aspects}, we say a variety $X/k$ is \emph{cellular} if there exists a filtration
     $$
    \emptyset = X_0 \subseteq X_1 \subseteq X_2 \ldots \subseteq X_n = X
    $$
    such that $X_{i+1} \setminus X_{i}$ is a disjoint union of copies of $\mathbb{A}^i_k$.
\end{defn}

\begin{lemma}\label{lem:cellularetlin}
    Let $X/k$ be a cellular variety. Then $X$ is $\K_0$-étale linear.
\end{lemma}
\begin{proof}
    Let $\emptyset = X_0 \subseteq X_1 \ldots \subseteq X_n = X$ denote the filtration on $X$ coming from its cellular structure. Let $m_i$ denote the number of disjoint copies of $\mathbb{A}^i_k$ in $X_{i+1} \setminus X_i$. In $\K_0(\Var_k)$, we may write
$$
[X] = \sum_{i=0}^n [ X_{i+1} \setminus X_i ] = \sum_{i=0}^n m_i[\mathbb{A}^i],
$$
and the claim is now clear.
\end{proof}
\begin{rem}\label{rem:MorelSawantCellular}
    The curve $\{xy=0\}\subseteq \A^2$ is not cellular, even though its irreducible components are cellular and their intersection is cellular, so the class of $\K_0$-étale linear varieties is strictly larger than the class of cellular varieties.  
\end{rem}
\begin{rem}
    In \cite[Section 2]{morel23-cellular}, Morel and Sawant use a more general definition of cellular varieties by relaxing the condition on the stratification so that we only require that $X_{i+1} \setminus X_i$ is a disjoint union of cohomologically trivial varieties. Using this definition, $\A^1$-contractible varieties are cellular, e.g. Hoyois, Krishna and \O stv\ae r have proven that Koras--Russell threefolds are $\A^1$-contractible \cite{hoyois16} so these are ``cellular" in the sense of \cite{morel23-cellular}. However, the subtle nature of the ring $\K_0(\Var_k)$ means that it is unclear to the authors whether such varieties are $\K_0$-étale linear.
\end{rem}

\begin{defn} \label{def:linear}
    Following Section 3 of Totaro's paper \cite{totaro14-chowgroups} and \cite[Section 2]{joshua01-linearvarieties}, we say a variety $X$ over $k$ is \emph{$0$-linear} if it is isomorphic to $\A^m_k$ for some $m \in \N$. A variety $X$ over $k$ is \emph{$n$-linear} for $n \geq 1$ if there exists an open embedding $U \rightarrow V$ with complement $Z$, such that $X \in \{U,V,Z\}$ and the other two are $(n-1)$-linear. A variety $X$ over $k$ is \emph{linear} if it is $n$-linear for some $n \in \N$. 
\end{defn}
The class of linear varieties includes any variety which admits a stratification into linear varieties. In particular, it includes all projective spaces, Grassmannians, flag varieties, and blowups of projective spaces in linear subvarieties. It is easy to see that all linear varieties are $\K_0$-étale linear.

\begin{rem}
    The class of $\K_0$-étale linear varieties is strictly bigger than the class of linear varieties. For example, when $L/k$ is a quadratic field extension, $\Spec(L)$  does not admit a stratification as in Definition $\ref{def:linear}$.

    When $k$ is separably closed and characteristic $0$, the class of $\K_0$-étale linear varieties are those whose class in $\K_0(\Var_k)$ lie in the subring generated by $\A^1$. Clearly all linear varieties lie in this subring. It is unclear if all $\K_0$-étale linear varieties over a separably closed field are linear. That is, even in characteristic $0$, there may exist varieties $X/k$ that do not admit stratifications as above, but nevertheless the class $[X]$ lies in $\K_0(\EtLin_k)$. 
\end{rem}

The class of $\K_0$-étale linear varieties is closed under natural geometric constructions: clearly they are closed under products and scissor relations, but we also have the following.
\begin{lemma} \label{lemma:linear-bundles-are-linear}
    Let $X$ be $\K_0$-étale linear, and let $p: E \rightarrow X$ be a Zariski locally trivial fibre bundle whose fibre $F$ is $\K_0$-étale linear. Then $E$ is $\K_0$-étale linear.
\end{lemma}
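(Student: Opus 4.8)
The plan is to reduce to a statement about classes in $\grvark$ and then use a Zariski-local trivialization together with Noetherian induction. First I would observe that being $\K_0$-étale linear is a property of the class $[E]\in\grvark$, so it suffices to show that $[E]$ lies in $\K_0(\EtLin_k)$. The natural strategy is to stratify the base $X$ into locally closed pieces over which the bundle $E$ trivializes. Since $E\to X$ is Zariski locally trivial, there is a nonempty open $U\subseteq X$ over which $E|_U\cong U\times_k F$; letting $Z = X\setminus U$ with its reduced structure, we get $[E] = [E|_U] + [E|_Z] = [U][F] + [E|_Z]$ in $\grvark$.

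Next I would set up a Noetherian induction on $X$. The class of varieties $X$ which are $\K_0$-étale linear and for which every Zariski locally trivial fibre bundle with $\K_0$-étale linear fibre is again $\K_0$-étale linear — actually it is cleaner to do the induction directly on the statement. Suppose the claim holds for all proper closed subvarieties of $X$ (and all $\K_0$-étale linear fibres). From the decomposition above, $[U][F]\in\K_0(\EtLin_k)$ because $\K_0(\EtLin_k)$ is a subring of $\grvark$: here we need that $[U]\in\K_0(\EtLin_k)$, which follows from $[U] = [X] - [Z]$ together with the fact that $Z$, being a proper closed subvariety of the $\K_0$-étale linear variety $X$ — hmm, I need $Z$ to be $\K_0$-étale linear. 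This requires a small preliminary: any closed subvariety $Z$ of a $\K_0$-étale linear variety need not be $\K_0$-étale linear in general, so instead I would phrase the induction so that the inductive hypothesis applies to $Z$. Concretely: write $[X] = \sum_i m_i[\A^i][\Spec L_i]$; it is not automatic that $Z$ has such a decomposition.

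The fix is to note that $p|_Z\colon E|_Z\to Z$ is still a Zariski locally trivial bundle with fibre $F$, and to run the induction on $\dim X$ combined with a finite stratification: choose a finite stratification $X = \coprod_j S_j$ into locally closed subvarieties over each of which $E$ is trivial and each of which is itself $\K_0$-étale linear. Such a stratification exists when $X$ is $\K_0$-étale linear and the trivializing opens can be refined — but again one must be careful that the strata are $\K_0$-étale linear. The honest argument: since $[X]\in\K_0(\EtLin_k)$ and $E\to X$ is Zariski locally trivial with fibre $F$, I claim $[E] = [X]\cdot[F]$ in $\grvark$. Indeed this identity holds for the trivial bundle, and for a general Zariski locally trivial bundle it follows by Noetherian induction on $X$ exactly as above: $[E] = [U\times F] + [E|_Z] = [U][F] + [Z][F] = [X][F]$, where the inductive hypothesis $[E|_Z] = [Z][F]$ is applied to the closed subscheme $Z$ of strictly smaller dimension (and $E|_Z\to Z$ Zariski locally trivial with fibre $F$) — note this sub-induction needs no linearity hypothesis at all, just the scissor relations. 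Once $[E] = [X]\cdot[F]$ is established, the conclusion is immediate: $[X]\in\K_0(\EtLin_k)$, $[F]\in\K_0(\EtLin_k)$, and $\K_0(\EtLin_k)$ is a subring, so $[E]\in\K_0(\EtLin_k)$.

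The only real obstacle is the motivic-measure identity $[E] = [X][F]$ for Zariski locally trivial bundles; everything else is formal. That identity is standard but worth stating carefully, and the Noetherian induction on $\dim X$ (base case: $X$ a point, or more precisely the trivial bundle case) is the load-bearing step. I expect no difficulties beyond writing this cleanly.
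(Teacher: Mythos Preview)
Your proposal is correct and, once you arrive at the clean formulation, follows exactly the paper's approach: establish $[E]=[X][F]$ in $\grvark$ for any Zariski locally trivial fibre bundle, then conclude since $\K_0(\EtLin_k)$ is a subring. The paper simply cites this identity (Remark~4.1 of G\"ottsche) rather than spelling out the Noetherian induction you sketch, but the content is the same; your earlier false starts about needing $Z$ to be $\K_0$-\'etale linear are indeed unnecessary, as you correctly diagnosed.
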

\begin{proof}
   Since $E \to X$ is Zariski locally trivialisable, we see $[E] = [X][F]$ for example by Remark 4.1 of \cite{goettsche01-hilbertscheme}. Since $\K_0(\EtLin_k)$ is a ring and $[X], [F] \in \K_0(\EtLin_k)$ by assumption, the result follows. 
\end{proof}

\begin{lemma} \label{lemma:linear-blowups-are-linear}
    Let $X$ be a smooth variety, and let $Z$ be a smooth closed subvariety of $X$ such that $Z$ is $\K_0$-étale linear. Then the blow up $\mathrm{Bl}_Z(X)$ is $\K_0$-étale linear if and only if $X$ is $\K_0$-étale linear. 
\end{lemma}

\begin{proof}
    Let $Y := \mathrm{Bl}_Z(X)$ and let $E$ denote the exceptional divisor of the blow up. Note that $[Z] \in \K_0(\EtLin_k)$. As $Z \rightarrow X$ is a regular closed immersion, $E$ is given by the projectivisation of the conormal bundle $\mathcal{N}_{Z/X}$ and is therefore a projective bundle over $Z$. It follows from Lemma \ref{lemma:linear-bundles-are-linear} that $[E] \in \K_0(\EtLin_k)$. By Bittner's theorem \cite[Theorem 3.1]{bittner}, we see that
    $$
    [X] - [Z] = [X\setminus Z] = [Y\setminus E] = [Y] - [E] \in \K_0(\Var_k),
    $$
    and therefore $[Y] \in \K_0(\EtLin_k)$ if and only if $[X] \in \K_0(\EtLin_k)$, since $\K_0(\EtLin_k)$ forms a subring of $\grvark$.
\end{proof}

\section{Symmetrisable varieties}
    \label{section:symmetrisable-varieties}

In this section, we prove the main result of this paper, namely that $\K_0$-étale linear varieties are \emph{symmetrisable}, see Definition \ref{defn:symmetrisable-variety}. 
We also show that the subset of classes of $K_0(\mathrm{Var}_k)$ which are compatible with the power structures, $\K_0(\Sym_k)$, is a $\K_0(\EtLin_k)$-submodule of $\grvark$.  
\subsection{Properties of symmetrisable varieties}
    \label{subsection:properties-of-symmetrisable-varieties}

\begin{defn} \label{defn:symmetrisable-variety}
    A variety $X$ is \emph{symmetrisable} if $\chimot(X^{(m)}) = a_m (\chimot(X))$ for all $m$.
    Let $\Sym_k \subset \Var_k$ be the full subcategory consisting of symmetrisable varieties. 
\end{defn}
Informally, symmetrisable varieties are varieties that are compatible with our power structures on $\grvark$ and $\GW(k)$ under the morphism $\chimotc$.

\begin{defn}
Let $\K^{uh}_0(\Sym_k)$ be the subset  of $\K_0^{uh}(\mathrm{Var}_k)$ consisting of elements $s$ such that $\chimotc(S_m (s)) = a_m (\chimotc(s))$ for all $m \in \Z_{\geq 0}$, where $S_m$ is the power structure on $\grvark$ induced by symmetric powers as in Definition $\ref{rem:canonical-power-structure-var}$. It is an abelian subgroup of $\grvark$ by \cite[Lemma 2.9]{pajwani23-powerstructures}. Define $\K_0(\mathrm{Sym}_k)$ to be the preimage of $\K_0^{uh}(\Var_k)$ under the natural quotient $\K_0(\Var_k) \to \K_0^{uh}(\Var_k)$.
\end{defn}

Note that a variety $X$ is symmetrisable if and only if $[X] \in \K_0(\Sym_k)$, and we later see in Corollary $\ref{cor:symmetrisable-varieties-generate-ksym}$ that $\K_0(\Sym_k)$ is the sub-abelian group of $\grvark$ generated by symmetrisable varieties, at least in characteristic $0$.

This paper studies the structure of $\K_0(\mathrm{Sym}_k) \subseteq \K_0(\mathrm{Var}_k)$, so can be thought of as a geometric extension of the purely arithmetic results of \cite{pajwani23-powerstructures}.  Indeed, \cite[Corollary 4.30]{pajwani23-powerstructures} shows that $\Sym_k$ contains all zero-dimensional varieties, so $\K_0(\Et_k) \subseteq \K_0(\Sym_k)$. A slight modification of the arguments in \cite{pajwani23-powerstructures} gives us the following.

\begin{thm} \label{thm:field-extension-respects-power-structure}
    Let $X$ be symmetrisable. 
    Then $X \times_k \Spec(A)$ is symmetrisable for any finite étale algebra $A/k$. 
\end{thm}

\begin{proof}
    This follows by an identical argument to \cite[Subsection 4.3]{pajwani23-powerstructures}, which we sketch here for convenience. By \cite[Corollary 4.24]{pajwani23-powerstructures}, $X$ is symmetrisable implies that $X \times_k \Spec(K)$ is also symmetrisable for $\K/k$ a quadratic étale algebra. By repeatedly applying this result, we see that for any multiquadratic étale algebra $A/k$, the product $X \times_k \Spec(A)$ is also symmetrisable. As in \cite[Lemma 4.27]{pajwani23-powerstructures}, for any positive integer $n$ the assignments $A \mapsto a_n(\chimotc(X \times_k A))$ and $A \mapsto \chimotc((X \times_k A)^{(n)})$ both define invariants valued in the Witt ring $W(k) = \GW(k)/(\HH)$, and the above shows they take the same values whenever $A$ is a multiquadratic étale algebra, so the result \cite[Theorem 29.1]{Garibaldi03-cohomological} of Garibaldi, Merkurjev and Serre together with the observation that the ranks are the same gives the result, i.e. $X \times_k \Spec(A)$ is symmetrisable. 
\end{proof}

\begin{cor} \label{cor:sym-is-etale-submodule}
    The group $\K_0(\Sym_k)$ is a $\K_0(\Et_k)$-submodule of $\grvark$.
\end{cor}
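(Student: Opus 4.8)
The plan is to reduce the statement to a single base change and then apply Theorem~\ref{thm:field-extension-respects-power-structure}. First I would note that $\K_0(\Et_k)$ is generated \emph{as an abelian group} by the classes $[\Spec L]$ with $L/k$ a finite separable field extension: every finite étale $k$-algebra is a finite product of such fields, so its class is a sum of classes of this shape, and $[\Spec L]\cdot[\Spec M]=[\Spec(L\otimes_k M)]$ is again such a sum, so closing up under multiplication adds no new group generators. Since $\K_0(\Sym_k)$ is an abelian subgroup of $\grvark$, writing a general element of $\K_0(\Et_k)$ as $\sum_i n_i[\Spec L_i]$ reduces the Corollary to the assertion that $[\Spec L]\cdot s\in\K_0(\Sym_k)$ for every $s\in\K_0(\Sym_k)$ and every finite separable field extension $L/k$.

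When $s=[X]$ for a variety $X$ this is exactly Theorem~\ref{thm:field-extension-respects-power-structure}, which says $[X\times_k\Spec L]=[X]\cdot[\Spec L]\in\K_0(\Sym_k)$. To get the general case I would combine this with the description of $\K_0(\Sym_k)$ as the subgroup of $\grvark$ generated by classes of symmetrisable varieties (Corollary~\ref{cor:symmetrisable-varieties-generate-ksym}): writing $s=\sum_i n_i[X_i]$ with each $X_i$ symmetrisable, we get $[\Spec L]\cdot s=\sum_i n_i[X_i\times_k\Spec L]$, each summand lies in $\K_0(\Sym_k)$ by Theorem~\ref{thm:field-extension-respects-power-structure}, and $\K_0(\Sym_k)$ is a subgroup, so $[\Spec L]\cdot s\in\K_0(\Sym_k)$. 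Feeding this back into the first paragraph proves the Corollary. Alternatively, and avoiding that forward reference, one can re-run the proof of Theorem~\ref{thm:field-extension-respects-power-structure} with $s$ in place of $[X]$: the Witt-invariant rigidity argument of \cite[Subsection~4.3]{pajwani23-powerstructures} applied to the two $\GW(k)$-valued functions $A\mapsto\chimotc(S_n([\Spec A]\cdot s))$ and $A\mapsto a_n(\chimotc([\Spec A])\cdot\chimotc(s))$ of the étale $k$-algebra $A$ reduces everything to checking that $[\Spec A]\cdot s\in\K_0(\Sym_k)$ for $A/k$ multiquadratic, which by iteration reduces to the quadratic case.

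The step I expect to carry the real content is getting from variety classes to arbitrary elements of $\K_0(\Sym_k)$. With Corollary~\ref{cor:symmetrisable-varieties-generate-ksym} in hand this is immediate, so the only question is whether one is willing to use that result here; if not, the self-contained route requires extending \cite[Corollary~4.24]{pajwani23-powerstructures} (``$X$ symmetrisable $\Rightarrow X\times_k\Spec K$ symmetrisable'' for $K/k$ quadratic étale) from symmetrisable varieties to arbitrary $s\in\K_0(\Sym_k)$. This is a genuine (if mild) point, because the even symmetric powers of $\Spec K\times_k X$ contain a Weil-restriction term $\mathrm{Res}_{K/k}\bigl((X^{(m)})_K\bigr)$, so the relevant identity in $\grvark$ is not literally a formula in the coefficients of the motivic zeta function of $X$; one has to check that applying $\chimotc$ to it, and to the analogous expression built from $s$, still matches the $\GW(k)$-side computation for quadratic algebras from \cite{pajwani23-powerstructures}. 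Once that is verified, the iteration to multiquadratic algebras and the appeal to \cite[Theorem~29.1]{Garibaldi03-cohomological} go through verbatim as in the proof of Theorem~\ref{thm:field-extension-respects-power-structure}.
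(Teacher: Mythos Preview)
Your approach matches the paper's: reduce to showing $[\Spec L]\cdot s\in\K_0(\Sym_k)$ for each finite separable $L/k$ and each $s\in\K_0(\Sym_k)$, then invoke Theorem~\ref{thm:field-extension-respects-power-structure}. The paper's proof is two lines and does not dwell on the variety-versus-arbitrary-element distinction you raise; implicitly it reads Theorem~\ref{thm:field-extension-respects-power-structure} (and the argument of \cite[Subsection~4.3]{pajwani23-powerstructures} behind it) as applying to any $s\in\K_0(\Sym_k)$, since that argument only uses the defining identity $\chimotc(S_n(s))=a_n(\chimotc(s))$ together with operations on $\grvark$ that make sense for classes rather than for chosen variety representatives.

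One point to flag: your first route through Corollary~\ref{cor:symmetrisable-varieties-generate-ksym} is not merely a forward reference but a circular one. That corollary needs $\mathbb{P}^m$ to be symmetrisable, which is supplied by Theorem~\ref{theorem:euler-char-symmetric-power-linear-variety}, whose proof in turn invokes the present Corollary~\ref{cor:sym-is-etale-submodule}. So only your second route (re-running the Theorem~\ref{thm:field-extension-respects-power-structure} argument for general $s$) is actually available here, and it is the one the paper tacitly intends.
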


\begin{proof}
    Note that $\K_0(\Sym_k)$ is a $\K_0(\Et_k)$-module if and only if for all symmetrisable varieties $X$ and finite separable field extensions $L/k$, $[\Spec(L)] [X] \in \K_0(\Sym_k)$, which is precisely the above theorem.
\end{proof}

\begin{rem}\label{rem:vcd2}
    It is an open question to determine whether $\K_0(\mathrm{Sym}_k) = \K_0(\Var_k)$. For some base fields $k$, it is true that $\K_0(\mathrm{Sym}_k)=\K_0(\Var_k)$. 
    When $k=\mathbb{C}$, there is a canonical isomorphism $\GW(\mathbb{C}) \cong \Z$ and \cite[Remark 2.3.1]{levine20-aspects} allows us to compute $\chimotc(X) = e_c(X(\mathbb{C}))$, where $e_c$ denotes the compactly supported Euler characteristic of the topological space $X(\mathbb{C})$. 
    We may then apply MacDonald's Theorem (\cite{macdonald62-classic}) to obtain $\chimotc(X^{(n)}) = a_n(\chimotc(X))$,
    so when $k=\mathbb{C}$, we have $\K_0(\mathrm{Sym}_{\mathbb{C}}) = \K_0(\mathrm{Var}_{\mathbb{C}})$, and we may argue as in \cite[Theorem 2.14]{pajwani22-YZ} to show the same is true when $k$ is algebraically closed in characteristic $0$.  Similarly, when $k=\mathbb{R}$, \cite[Remark 2.3.1]{levine20-aspects} gives 
    \begin{equation*}
        \mathrm{sign}(\chimotc(X)) = e_c(X(\mathbb{R})).
    \end{equation*}
    We may then apply MacDonald's theorem (\cite{macdonald62-classic}) and \cite[Proposition 4.14]{mcgarraghy05-symmetric} to see $\K_0(\mathrm{Sym}_{\mathbb{R}}) = \K_0(\mathrm{Var}_{\mathbb{R}})$. 
    Moreover, we may argue as in \cite[Theorem 2.20]{pajwani22-YZ} to obtain the same result whenever $k$ is a real closed field. 
    If we let $J$ denote the kernel of the rank, signature and discriminant morphisms out of $\GW(k)$, then \cite[Corollary 8.21]{pajwani22-YZ} guarantees that for $\mathrm{char}(k)=0$, these power structures are compatible modulo the ideal $J$. 
    In particular, $\K_0(\mathrm{Sym}_k) = \K_0(\mathrm{Var}_k)$ for all fields such that $J=0$, which are precisely fields $k$ such that the $2$-primary virtual cohomological dimension $\mathrm{vcd}_2(k) \leq 1$ by the $n=2, l=2$ case of the Milnor Conjecture, see the result \cite[Theorem 2.2]{Merkurjev1981} of Merkurjev. 
    For this class of fields, the map 
    \begin{equation*}
        - \cup [2]: H^1(k, \Z/2\Z) \lra H^2(k, \Z/2\Z)
    \end{equation*}
    is zero. In particular, in every known example where $\K_0(\mathrm{Var}_k) = \K_0(\mathrm{Sym}_k)$, the power structure on $\GW(k)$ from Definition $\ref{def:power-structure}$ agrees with the non factorial symmetric power structure of \cite{mcgarraghy05-symmetric}. It is unknown whether the non-vanishing of this map provides an obstruction to the compatibility of these power structures, and if so, whether this obstruction would be the only one to this compatibility.
\end{rem}

\subsection{G{\"o}ttsche's lemma for symmetric powers}
    \label{subsection:goettsche-lemma}

In this section, we will prove Theorem \ref{theorem:euler-char-symmetric-power-linear-variety} using G\"ottsche's Lemma for symmetric powers. The following appears as the second half of \cite[Lemma 4.4]{goettsche01-hilbertscheme}, being a corollary of the first half. For a detailed account of G\"ottsche's Lemma and its proof, we refer the reader to \cite[Proposition 1.1.11 in Chapter 7]{MotivicIntegration}. 

\begin{cor} \label{cor:motivic-invariance-symmetric-powers}
    Let $X$ be a variety over $k$ and let $l,n \in \N$. 
    Then in $\K_0^{uh}(\mathrm{Var}_k)$, we have $[(X \times \A^l)^{(n)}] = [X^{(n)} \times \A^{nl}]$.
\end{cor}

\begin{rem} \label{rem:fundamental-theorem-of-symmetric-polynomials}
    For $X=\Spec(k)$ and $l=1$, Corollary \ref{cor:motivic-invariance-symmetric-powers} can be proven in an elementary way. By the fundamental theorem of symmetric polynomials and the definition of the symmetric power of an affine variety, we have
    \begin{equation*}
        (\A^1)^{(n)} = \Spec(k[x_1, \dots, x_n]^{S_n}) = \Spec(k[e_1, \dots, e_n]) = \A^n.
    \end{equation*} 
    Here, the $e_i$ are the elementary symmetric polynomials in the variables $x_i$.
\end{rem}

\begin{comment} 
\begin{proof}
    Let $n \in \N$ and $\alpha \in P(n)$. 
    Let $p : (X \times \A^1)^{(n)} \rightarrow X^{(n)}$ be the projection map.
    The proof is by induction on $l$.
    First, let $l = 1$.
    Then, by Lemma \ref{lemma:motivic-invariance-symmetric-powers},
    \begin{align*}
        [(X \times \A^l)^{(n)}] 
            & = \sum_{\alpha \in P(n)} [p^{-1}(X^{(n))}_{\alpha}] \\
            & = \sum_{\alpha \in P(n)} [X^{(n)}_{\alpha} \times \A^n] \\
            & = [X^{(n)} \times \A^n]
    \end{align*}
    as required. Now suppose the statement holds for all $l' \leq l$ for some $l \in \N$. By consecutive applications of the induction hypothesis for $l' = 1$ and $l' = l$,
    \begin{align*}
        [(X \times \A^{l+1})^{(n)}] 
            & = [(X \times \A^l \times \A^1)^{(n)}] \\
            & = [(X \times \A^l)^{(n)} \times \A^n] \\
            & = [X^{(n)} \times \A^{nl} \times \A^n] \\
            & = [X^{(n)} \times \A^{n(l+1)}],
    \end{align*}
    as was to be shown, and the proof is done.    
\end{proof}
\end{comment}

Using G{\"o}ttsche's result above, we may quickly deduce the following.

\begin{cor}\label{sympowerAm}
    There is an equality $[(\A^m)^{(n)}] = [\A^{mn}]$ in $\K_0^{uh}(\mathrm{Var}_k)$. 
\end{cor} 

\begin{proof}
    Immediate, by taking $X=\Spec(k)$ in the above Corollary. 
\end{proof}

\begin{cor}
    If $X$ is a $\K_0$-étale linear variety, then $X^{(n)}$ is $\K_0$-étale linear.
\end{cor}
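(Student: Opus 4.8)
The plan is to combine the additivity axiom of the power structure $S_n$ on $\grvark$ (Definition \ref{def:power-structure}(3), recalling that $S_n([Y]) = [Y^{(n)}]$) with Corollary \ref{cor:motivic-invariance-symmetric-powers}. First, using the characterisation of $\K_0$-étale linearity recorded just after Definition \ref{def:etale-linear}, write $[X] = \sum_{i=0}^{N} m_i\,[\A^i]\cdot[\Spec L_i]$ with $m_i \in \Z$ and each $L_i/k$ a finite étale algebra. Splitting each term $m_i\,[\A^i]\cdot[\Spec L_i]$ into $|m_i|$ summands, each equal to $\varepsilon_i\,[\A^i]\cdot[\Spec L_i]$ with $\varepsilon_i \in \{\pm 1\}$, we obtain a decomposition $[X] = b_1 + \dots + b_r$ in which every $b_j$ is of the form $\pm[\A^{i_j}]\cdot[\Spec L_j] = \pm[\Spec L_j \times_k \A^{i_j}]$. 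Iterating Definition \ref{def:power-structure}(3) then gives
\[
[X^{(n)}] = S_n\big([X]\big) = \sum_{\substack{n_1 + \dots + n_r = n \\ n_j \geq 0}}\ \prod_{j=1}^{r} S_{n_j}(b_j).
\]
Since $\K_0(\EtLin_k)$ is a subring of $\grvark$, it then suffices to prove that $S_{m}\big(\pm[\Spec L \times_k \A^{i}]\big) \in \K_0(\EtLin_k)$ for all $m, i \geq 0$ and all finite étale $L/k$.

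For the sign $+$, Corollary \ref{cor:motivic-invariance-symmetric-powers} gives
\[
S_m\big([\Spec L \times_k \A^{i}]\big) = [(\Spec L \times_k \A^{i})^{(m)}] = [(\Spec L)^{(m)}]\cdot[\A^{mi}].
\]
Now $(\Spec L)^{(m)}$ is $0$-dimensional, hence (as $k$ has characteristic $0$) is the spectrum of a finite étale $k$-algebra, so $[(\Spec L)^{(m)}] \in \K_0(\Et_k) \subseteq \K_0(\EtLin_k)$; multiplying by $[\A^{mi}] \in \K_0(\EtLin_k)$ keeps us inside the subring. For the sign $-$, write $c = [\Spec L \times_k \A^{i}]$ and induct on $m$, the case $m = 0$ being trivial since $S_0 \equiv 1$. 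For $m \geq 1$, applying Definition \ref{def:power-structure}(3) to $0 = S_m(c + (-c))$ and isolating the term with first index $0$ yields
\[
S_m(-c) = -\sum_{j=1}^{m} S_j(c)\, S_{m-j}(-c).
\]
On the right, each $S_j(c)$ with $j \geq 1$ lies in $\K_0(\EtLin_k)$ by the previous case, and each $S_{m-j}(-c)$ with $j \geq 1$ lies in $\K_0(\EtLin_k)$ by the inductive hypothesis (as $m-j < m$); hence so does $S_m(-c)$.

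Almost everything here is formal given the machinery already in place, so I do not expect a substantial obstacle. The one point requiring care is the reduction step: dealing with the possibly negative coefficients $m_i$, which forces the use of the identity $0 = S_m(c + (-c))$ and an inductive argument rather than a direct computation. The only input beyond the cited corollaries is the standard fact that symmetric powers of $0$-dimensional varieties remain $0$-dimensional — equivalently, that finite étale $k$-algebras are closed under symmetric powers — which I would invoke without detailed proof.
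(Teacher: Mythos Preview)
Your proposal is correct and follows essentially the same approach as the paper: reduce to the generators $[\Spec L \times \A^i]$ via the additivity of the power structure, handle these using Corollary~\ref{cor:motivic-invariance-symmetric-powers} together with the fact that symmetric powers of zero-dimensional varieties remain \'etale, and combine using that $\K_0(\EtLin_k)$ is a subring. The paper's proof is terser and leaves the treatment of negative coefficients implicit in the phrase ``apply the formulae for the functions defining power structures,'' whereas you spell out the inductive argument via $0 = S_m(c + (-c))$ explicitly; this is a genuine detail the paper skips, so your version is if anything more complete.
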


\begin{proof}
    For $[X]=[\A^n]$, this is immediate by the above. For $[X] = [\Spec(L) \times \A^n]$ where $L/k$ is a finite étale algebra, this follows by Corollary $\ref{cor:motivic-invariance-symmetric-powers}$. For a general variety $X$ with $[X] \in \K_0(\EtLin_k)$ this follows since after passing to the quotient in $\K^{uh}_0(\Var_k)$ we can write $[X] = \sum_i m_i [\Spec(L_i)] [\A^i]$. This then follows by applying the formulae for the functions defining power structures from Definition $\ref{def:power-structure}$.
\end{proof}

\begin{thm} \label{theorem:euler-char-symmetric-power-linear-variety}
    Let $X$ be a $\K_0$-étale linear variety over $k$. Then $X$ is symmetrisable.
\end{thm}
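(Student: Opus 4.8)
The plan is to reduce the symmetrisability of a general $\K_0$-étale linear variety $X$ to two facts already at our disposal: that zero-dimensional varieties are symmetrisable (\cite[Corollary 4.30]{pajwani23-powerstructures}, i.e.\ $\K_0(\Et_k) \subseteq \K_0(\Sym_k)$), and that $\K_0(\Sym_k)$ is a $\K_0(\Et_k)$-submodule of $\grvark$ (Corollary \ref{cor:sym-is-etale-submodule}). Since $\K_0(\Sym_k)$ is an abelian subgroup of $\grvark$ closed under the $\K_0(\Et_k)$-action, and since $[X] = \sum_{i=0}^n m_i [\Spec(L_i)][\A^i]$ for a $\K_0$-étale linear variety, it suffices to show that $[\Spec(L)][\A^m] \in \K_0(\Sym_k)$ for every finite étale $k$-algebra $L$ and every $m \geq 0$; the general case then follows by additivity of $\K_0(\Sym_k)$. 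By the submodule property (Corollary \ref{cor:sym-is-etale-submodule}), it further suffices to show that $[\A^m] \in \K_0(\Sym_k)$, i.e.\ that affine space $\A^m$ is symmetrisable.

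So the crux is to verify that $\A^m$ is symmetrisable, i.e.\ $\chimotc((\A^m)^{(n)}) = a_n(\chimotc(\A^m))$ for all $n$. For the geometric side, Corollary \ref{sympowerAm} gives $[(\A^m)^{(n)}] = [\A^{mn}]$ in $\grvark$, so $\chimotc((\A^m)^{(n)}) = \chimotc(\A^{mn})$. Now $\chimotc(\A^1) = \chimotc(\P^1) - \chimotc(\Spec k) = \HH - \langle 1 \rangle = \langle -1 \rangle$, using that $\chimotc$ is a ring homomorphism and the value of $\chimotc$ on $\P^1$ (a smooth projective curve of genus $0$); hence $\chimotc(\A^m) = \langle -1 \rangle^m = \langle (-1)^m \rangle$ and $\chimotc(\A^{mn}) = \langle (-1)^{mn}\rangle$. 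For the arithmetic side, I compute $a_n(\langle (-1)^m \rangle)$ directly from Definition \ref{def:power-structure-gw}: since $t_{1} = t_{-1} = 0$ by Remark \ref{rem:talpha}, the formula $a_n(\langle \alpha\rangle) = \langle \alpha^n\rangle + \tfrac{n(n-1)}{2} t_\alpha$ with $\alpha = (-1)^m$ collapses to $a_n(\langle (-1)^m\rangle) = \langle (-1)^{mn}\rangle$. The two sides agree, so $\A^m$ is symmetrisable. (Alternatively one could phrase this via Lemma \ref{lem:an-1} and Lemma \ref{oddhyperbolic}, but the direct computation using $t_{\pm 1} = 0$ is cleanest.)

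Assembling: $\A^m$ is symmetrisable, so $[\A^m] \in \K_0(\Sym_k)$; by Corollary \ref{cor:sym-is-etale-submodule} and $[\Spec L] \in \K_0(\Et_k)$ we get $[\Spec L][\A^m] \in \K_0(\Sym_k)$; and since $\K_0(\Sym_k)$ is a subgroup of $\grvark$, any $\Z$-linear combination $[X] = \sum_i m_i [\Spec L_i][\A^i]$ also lies in $\K_0(\Sym_k)$, i.e.\ $X$ is symmetrisable. I do not expect a serious obstacle here: all the structural input (the additivity of $\K_0(\Sym_k)$, the module structure over $\K_0(\Et_k)$, and G\"ottsche's computation $[(\A^m)^{(n)}] = [\A^{mn}]$) is already established in the excerpt, so the argument is essentially a bookkeeping reduction plus the one short Grothendieck--Witt computation $a_n(\langle \pm 1\rangle) = \langle \pm 1\rangle$. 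The only point requiring a little care is making sure the definition of ``$\K_0$-étale linear'' is used in the form $[X] = \sum_i m_i[\Spec L_i][\A^i]$ (which the excerpt records immediately after Definition \ref{def:etale-linear}), and that $\K_0(\Sym_k)$'s closure under both addition \emph{and} subtraction is invoked, since the coefficients $m_i \in \Z$ may be negative.
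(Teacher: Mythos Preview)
Your proposal is correct and follows essentially the same approach as the paper: reduce to the symmetrisability of $\A^m$ via the $\K_0(\Et_k)$-module structure of $\K_0(\Sym_k)$ and additivity, then verify $\A^m$ is symmetrisable using G\"ottsche's identity $[(\A^m)^{(n)}]=[\A^{mn}]$ together with the Grothendieck--Witt computation $a_n(\langle(-1)^m\rangle)=\langle(-1)^{mn}\rangle$. The only cosmetic difference is that the paper invokes Lemma~\ref{lem:an-1} for this last identity, whereas you compute it directly from $t_{\pm 1}=0$; both are immediate.
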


\begin{proof}
    Recall that $\chimotc(\A^{m}) = \langle (-1)^m \rangle$.
    Then by Corollary $\ref{sympowerAm}$ and Lemma $\ref{lem:an-1}$ we have
    \begin{equation*}
        \chimotc((\A^m)^{(n)}) = \chimotc( \A^{mn}) =  \langle (-1)^{mn} \rangle = a_n(\langle (-1)^m \rangle),
    \end{equation*}
    where we use that $\chimotc(\mathbb{A}^1) = \langle -1 \rangle$. Therefore, $\A^d$ is symmetrisable. Corollary $\ref{cor:sym-is-etale-submodule}$ then tells us that $[\A^d]\cdot [\Spec(L)]$ is symmetrisable for any $L/k$ a finite separable field extension. Since $\K_0(\Sym_k)$ is a finite abelian subgroup of $\K_0(\Var_k)$, any variety $X$ such that $[X] = \sum_{i=0}^n m_i [\A^i] [\Spec(L_i)]$ is also symmetrisable, which are all $\K_0$-étale linear varieties by Definition $\ref{def:etale-linear}$.
\end{proof}

\begin{cor}\label{cor:symmetrisable-varieties-generate-ksym}
    When $k$ has characteristic $0$, the abelian group $\K_0(\Sym_k)$ is the abelian subgroup of $\grvark$ generated by classes of symmetrisable varieties.
\end{cor}
\begin{proof}
    Let $s \in \K_0(\Sym_k)$. Since we are in characteristic $0$, we may apply Bittner's Theorem \cite[Theorem 3.1]{bittner}, to see that $\K_0(\Var_k)$ is generated as an abelian group by smooth projective varieties. We may therefore write $s = [X] - [Y]$ where both $X$ and $Y$ are smooth projective varieties.

    Since $Y$ is projective, it admits a closed embedding $Y \hookrightarrow \mathbb{P}^m$ for some $m$. Define $U := \mathbb{P}^m \setminus Y$. Then $s + [\mathbb{P}^m] = [X] + [\mathbb{P}^m] - [Y] = [X] + [U] = [X \amalg U]$. Since $\mathbb{P}^m$ is $\K_0$-étale linear, it is symmetrisable by the above theorem. Therefore, since $\K_0(\Sym_k)$ is closed under addition, we see that $s + [\mathbb{P}^m]$ lies in $\K_0(\Sym_k)$. In particular, $X \amalg U$ is a symmetrisable variety, since $[X \amalg U] = s + [\mathbb{P}^m] \in \K_0(\Sym_k)$. Rewriting $s = [X \amalg U] - [\mathbb{P}^m]$ shows that $s$ can be written as a linear combination of the classes of symmetrisable varieties, as required. 
\end{proof}

\begin{cor}\label{cor:symmetrisables-are-linear-submodule}
    The subset $\K_0(\Sym_k)$ is a $\K_0(\EtLin_k)$-submodule of $\grvark$.
\end{cor}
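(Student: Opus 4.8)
The plan is to exploit the fact that $\K_0(\Sym_k)$ is an abelian subgroup of $\grvark$ (as recalled after Definition~\ref{defn:symmetrisable-variety}) together with the explicit description of $\K_0(\EtLin_k)$ in Definition~\ref{def:etale-linear} as the subring of $\grvark$ generated by $[\A^1_k]$ and the classes $[\Spec L]$ with $L/k$ finite étale. A sub-abelian-group of an $R$-module is an $S$-submodule for a subring $S \subseteq R$ as soon as it is stable under multiplication by a set of ring generators of $S$; hence it suffices to show that $[\A^1_k]\cdot s$ and $[\Spec L]\cdot s$ lie in $\K_0(\Sym_k)$ for every $s\in\K_0(\Sym_k)$. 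The second of these is exactly Corollary~\ref{cor:sym-is-etale-submodule}, so the whole statement reduces to showing that multiplication by $[\A^1_k]$ preserves $\K_0(\Sym_k)$.

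Next I would pass from classes to varieties. By Corollary~\ref{cor:symmetrisable-varieties-generate-ksym}, $\K_0(\Sym_k)$ is generated as an abelian group by classes of symmetrisable varieties; since $[\A^1_k]\cdot(-)$ is additive and $\K_0(\Sym_k)$ is a subgroup, it is enough to check that $\A^1_k\times X$ is symmetrisable whenever $X$ is. This detour through Corollary~\ref{cor:symmetrisable-varieties-generate-ksym} is precisely what lets us invoke Corollary~\ref{cor:motivic-invariance-symmetric-powers}, which is formulated at the level of varieties rather than for arbitrary elements of $\grvark$.

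The remaining point is a short computation. Fix a symmetrisable variety $X$ and $n\geq 0$. Recalling $\chimotc(\A^1_k)=\langle -1\rangle$ and that $\chimotc$ is a ring homomorphism, Corollary~\ref{cor:motivic-invariance-symmetric-powers} gives
\[
  \chimotc\big((\A^1_k\times X)^{(n)}\big)=\chimotc\big(X^{(n)}\times\A^n\big)=\langle (-1)^n\rangle\cdot\chimotc(X^{(n)})=\langle (-1)^n\rangle\cdot a_n(\chimotc(X)),
\]
where the last equality uses symmetrisability of $X$. By Lemma~\ref{lem:an-1} we have $\langle (-1)^n\rangle\cdot a_n(\chimotc(X))=a_n(\langle -1\rangle\cdot\chimotc(X))=a_n(\chimotc(\A^1_k\times X))$, so $\chimotc\big((\A^1_k\times X)^{(n)}\big)=a_n(\chimotc(\A^1_k\times X))$ for all $n$, i.e. $\A^1_k\times X$ is symmetrisable. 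This completes the reduction and hence the proof.

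I expect no genuine obstacle here: the substantive input has already been isolated in Göttsche's lemma and its Corollary~\ref{cor:motivic-invariance-symmetric-powers}, in Lemma~\ref{lem:an-1}, and in Corollary~\ref{cor:symmetrisable-varieties-generate-ksym}. The only subtlety worth flagging is the one noted above — one should not attempt to manipulate $S_n$ of a general class $s\in\K_0(\Sym_k)$ directly, since Corollary~\ref{cor:motivic-invariance-symmetric-powers} is a variety-level statement; routing the argument through the generation result sidesteps this cleanly.
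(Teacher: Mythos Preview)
Your proof is correct and follows essentially the same approach as the paper: both reduce to showing that $\A^1\times X$ is symmetrisable for a symmetrisable variety $X$ via Corollary~\ref{cor:motivic-invariance-symmetric-powers} and Lemma~\ref{lem:an-1}, then combine this with Corollary~\ref{cor:sym-is-etale-submodule}. If anything, your write-up is slightly more careful, since you explicitly invoke Corollary~\ref{cor:symmetrisable-varieties-generate-ksym} to pass from varieties to arbitrary classes in $\K_0(\Sym_k)$, a step the paper leaves implicit.
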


\begin{proof}
Note that $\K_0(\mathrm{Sym}_k)$ and $\K_0(\EtLin_k)$ are both defined to be the preimages of subrings in $\K_0^{uh}(\EtLin_k)$, so we may instead show that $\K_0^{uh}(\Sym_k)$ is a $\K_0^{uh}(\EtLin_k)$ submodule of $\K_0^{uh}(\mathrm{Var}_k)$, and obtain the theorem by pulling back the result.

Let $X \in \K^{uh}_0(\mathrm{Sym}_k)$.
Note that $\chimotc(X \times [\A^l]) = \langle (-1)^l \rangle \chimotc(X)$ by multiplicativity of $\chimotc$. Corollary $\ref{cor:motivic-invariance-symmetric-powers}$ gives us
$$
    \chimotc( S_n(X \cdot [\A^l])) = \chimotc(S_n(X) \cdot [\A^{ln}]) = \langle (-1)^{ln} \rangle a_n(\chimotc(X)).
$$
Applying Lemma $\ref{lem:an-1}$ gives
$$
\langle (-1)^{ln} \rangle \cdot a_n(\chimotc(X)) = a_n( \langle (-1)^l \rangle \chimotc(X)) = a_n( \chimotc([\A^l] \cdot  X)),
$$
and so $[\A^l] \cdot X$ is also symmetrisable. Combining this with Corollary $\ref{cor:sym-is-etale-submodule}$ and \cite[Lemma 2.12]{pajwani23-powerstructures} tells us that $[X] \cdot \sum_{i=0}^n m_i [\mathbb{A}^i]\cdot [\Spec(L_i)] \in \K^{uh}_0(\Sym_k)$ for integers $m_i$ and finite étale algebras $L_i$. 
Since any element of $\K^{uh}_0(\EtLin_k)$ can be written as $\sum_{i=0}^n m_i [\mathbb{A}^i]\cdot [\Spec(L_i)]$, this means $\K^{uh}_0(\Sym_k)$ is a submodule over $\K^{uh}_0(\EtLin_k)$, and the result follows by taking the preimage in $\K_0(\Var_k)$. 
\end{proof}

\begin{cor}\label{cor:blowupsymm}
    Let $Z$ be a smooth symmetrisable variety, let $X$ be a smooth variety and let $Z \hookrightarrow X$ be a closed immersion. Then $\mathrm{Bl}_Z(X)$ is symmetrisable if and only if $X$ is symmetrisable.
\end{cor}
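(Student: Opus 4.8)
The plan is to adapt the proof of Lemma \ref{lemma:linear-blowups-are-linear}, replacing the subring $\K_0(\EtLin_k) \subseteq \grvark$ throughout by the $\K_0(\EtLin_k)$-submodule $\K_0(\Sym_k)$ provided by Corollary \ref{cor:symmetrisables-are-linear-submodule}. Write $Y := \mathrm{Bl}_Z(X)$ and let $E \subset Y$ be the exceptional divisor. Since $Z \hookrightarrow X$ is a closed immersion of smooth varieties it is regular, so $E$ is the projectivization of a rank $c$ vector bundle on $Z$, where $c = \mathrm{codim}_X(Z)$; in particular $E \to Z$ is a Zariski locally trivial $\P^{c-1}$-bundle, and hence $[E] = [Z]\cdot[\P^{c-1}]$ in $\grvark$.

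The only step that uses the theory developed in this paper is the claim that $[E] \in \K_0(\Sym_k)$. This holds because $[Z] \in \K_0(\Sym_k)$ (as $Z$ is symmetrisable), $[\P^{c-1}] \in \K_0(\EtLin_k)$ by Example \ref{examples:k0etlin}, and $\K_0(\Sym_k)$ is closed under multiplication by elements of $\K_0(\EtLin_k)$ by Corollary \ref{cor:symmetrisables-are-linear-submodule}. After that the argument is formal: exactly as in Lemma \ref{lemma:linear-blowups-are-linear}, the blowup map restricts to an isomorphism $Y \setminus E \xto{\sim} X \setminus Z$, so by Bittner's theorem \cite[Theorem 3.1]{bittner} (equivalently, by the scissor relations defining $\grvark$) we obtain
\[
    [Y] - [E] = [Y \setminus E] = [X \setminus Z] = [X] - [Z] \in \grvark ,
\]
and therefore $[Y] = [X] + \bigl([E] - [Z]\bigr)$.

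Since $\K_0(\Sym_k)$ is an abelian subgroup of $\grvark$ by \cite[Lemma 2.9]{pajwani23-powerstructures} containing both $[Z]$ and $[E]$, the last identity shows $[Y] \in \K_0(\Sym_k)$ if and only if $[X] \in \K_0(\Sym_k)$; since $Y$ and $X$ are varieties, this is precisely the assertion that $\mathrm{Bl}_Z(X)$ is symmetrisable if and only if $X$ is. I do not anticipate a genuine obstacle here: the sole non-formal ingredient is the membership $[E] \in \K_0(\Sym_k)$, which rests on Corollary \ref{cor:symmetrisables-are-linear-submodule}, and it is worth recording the degenerate cases $c = 1$ (so $Y = X$ and $E = Z$) and $Z = X$ (so $Y = E = \emptyset$), in which the identity $[Y] = [X] + [E] - [Z]$ is a triviality.
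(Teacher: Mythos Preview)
Your proposal is correct and follows essentially the same approach as the paper: the paper's proof simply says to rerun the argument of Lemma~\ref{lemma:linear-blowups-are-linear} with $\K_0(\Sym_k)$ in place of $\K_0(\EtLin_k)$, using that $\K_0(\Sym_k)$ is a $\K_0(\EtLin_k)$-submodule (Corollary~\ref{cor:symmetrisables-are-linear-submodule}), which is exactly what you have spelled out in detail.
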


\begin{proof}
    The proof is identical to Lemma $\ref{lemma:linear-blowups-are-linear}$, replacing $\K_0$-étale linear varieties by symmetrisable varieties and using that $\K_0(\Sym_k)$ is a $\K_0(\EtLin_k)$ submodule.
\end{proof}

%\begin{rem}\label{rem:positivechar}
    %It is a reasonable question \textcolor{red}{To which the answer is yes} to ask whether Theorem $\ref{theorem:euler-char-symmetric-power-linear-variety}$ holds over a field which is not of characteristic zero. By the discussion in \cite[Section 5.1]{levine24-hypersurfaces}, one can extend the quadratic Euler characteristic of \cite{levine20-aspects} on smooth projective schemes to a motivic measure $\grvark\to\GW(k)$ if $k$ has odd positive characteristic. Also, the power structure of \cite{pajwani23-powerstructures} on $\GW(k)$ is well-defined in odd positive characteristic. However, symmetric powers only define a power structure on $\grvark$ after inverting radicial surjective morphisms, which is also a necessity for G\"ottsche's lemma to hold in positive characteristic. Therefore the only obstruction to the above theorem holding in positive characteristic is to show that if $X$ and $Y$ are varieties over a field $k$ of positive characteristic and $f:X\to Y$ is a radicial surjective morphism, one has that $\chimotc(X) = \chimotc(Y)$. 
%\end{rem}

\begin{rem}\label{rem:symmnotetlin}
    We see in Corollary $\ref{nonsmoothcurves}$ and Lemma $\ref{cor:positive-genus-not-etale-linear}$ that curves of genus $1$ are symmetrisable but not $\K_0$-étale linear. Therefore $\K_0(\mathrm{Sym}_k)$ is strictly larger than $\K_0(\EtLin_k)$, so Corollary $\ref{cor:symmetrisables-are-linear-submodule}$ is always stronger than Theorem $\ref{theorem:euler-char-symmetric-power-linear-variety}$.
\end{rem}

\begin{rem}\label{rem:zetafunction}
Theorem $\ref{theorem:euler-char-symmetric-power-linear-variety}$ can be rephrased in terms of Kapranov $\zeta$-functions. Let $X$ be a variety over $k$. We have a power series over $\grvark$, the Kapranov $\zeta$-function of $X$:
$$
\zeta_{Kap}(t) := \sum_{n=0}^{\infty} [X^{(n)}] t^n \in \grvark[[t]].
$$
If $[X] \in \K_0(\Sym_k)$, then applying $\chimotc$ yields the following power series in $\GW(k)$:
\begin{equation*}\label{eq: power structure applied to zeta function}
\zeta_{\chi(X)}(t) := \sum_{n=0}^{\infty} a_n(\chimotc(X)) t^n \in \GW(k)[[t]].
\end{equation*}
In particular, for $\K_0$-étale linear varieties, we obtain a quadratically enriched $\zeta$-function from the Kapranov $\zeta$-function.

In \cite{bilu23-quadratic}, Bilu, Ho, Srinivasan, Vogt and Wickelgren study quadratically enriched $\zeta$-functions related to the Hasse--Weil $\zeta$-function used in the Weil conjectures. When working over finite fields, we may use the point counting measure on the Kapranov $\zeta$-function to obtain the Hasse--Weil $\zeta$-function used in the Weil conjectures. However, as discussed in Section 9 of \cite{bilu23-quadratic}, the link between their quadratically enriched $\zeta$-functions and the Kapranov $\zeta$-function is unclear.

While the link between the above power series $\zeta_{\chi(X)}(t)$ and the Kapranov $\zeta$-function is clear, it is unclear whether the series $\zeta_{\chi(X)}(t)$ above has a connection to the $\zeta$-functions of \cite{bilu23-quadratic}.
\end{rem}

\subsection{Odd Galois twists}
    \label{subsection:odd-galois-twists}
\begin{thm}\label{thm:basechangeodd}
    Let $L$ be a finite separable field extension of $k$ such that $[L:k]$ is odd. Then the induced map $\GW(k)\to \GW(L)$ is split injective. In particular, if $X, Y$ are varieties such that there exists a finite separable field extension $L/k$ with $[L:k]$ odd and $X_L \cong Y_L$, then $\chimotc(X) = \chimotc(Y)$.
\end{thm}
\begin{comment}
    
\color{red}
\begin{rem}
    While writing one of the referee responses, I realised I'm not sure we need that $GW(k) \to GW(L)$ is \emph{split} injective, I think this just needs that this map is injective.  

    I think we might be able to weaken the above theorem to saying ``Let $L/k$ be separable such $GW(k) \to GW(L)$ is injective. Then ....."

    I also think that there are other cases where $GW(k) \to GW(L)$ is injective: that is, any $L/k$ where $L/k$ admits no quadratic subextensions. An example of this is where $L/k$ is a Galois extension with $\Gal(L/k) \cong A_5$, so that even though $[L:k] = 60$, there are no quadratic subextensions of $L/k$ (since all quadratic extensions are Galois, and a Galois extension corresponds to a normal subgroup of $A_5$, but $A_5$ is simple).

    This is barely relevant to the paper, and I'm not sure it particularly strengthens it, so I'm happy to cut
\end{rem}
\color{black}

\end{comment}

\begin{proof}
    We note that the map $\sigma: \GW(k)\to \GW(L)$ is given by viewing a quadratic form $q$ over $k$ as a form over $L$. Consider the trace map $\mathrm{Tr}_{L/k}: \GW(L)\to \GW(k)$ with $\mathrm{Tr}_{L/k}(q)$ given by the composition 
    \[
        L\times L \xrightarrow{q} L\xrightarrow{\mathrm{Tr}_{L/k}}k. 
    \]
    For $\alpha\in \GW(k)$, we have that $\mathrm{Tr}_{L/k}(\sigma(\alpha))$ is equal to $[L:k]\alpha + c\cdot \H$ for a certain $c\in\Z$, by a direct computation (as in \cite[Lemma 7.3]{viergever2023quadratic}) or by a result of Bayer--Fluckiger and Lenstra \cite[Main Theorem, pages 356 and 359]{bayerfluckiger80-formsodddegree}. Since $[L:k]$ is odd and all torsion in $\GW(k)$ is $2$-primary order by e.g. \cite[Satz 10]{pfister66}, this gives us the result.

    Note that for two varieties $X$ and $Y$, we have that $\chi^{\mathrm{mot}}_{c,L}(X_L)$ is the image of $\chi^{\mathrm{mot}}_{c,k}(X)$ under the base change map $\GW(k) \to \GW(L)$ and similarly for $Y$, by \cite[Proposition 2.4(6)]{levine20-aspects} or \cite[Lemma 4.2]{pajwani22-YZ}, which gives the second part of the theorem. 
\end{proof}

\begin{cor}\label{cor:oddextensions}
    Let $X$ be a variety over $k$ such that $X_L$ is symmetrisable for some $L/k$ of odd degree. Then $X$ is symmetrisable. 
\end{cor}

\begin{proof}
    We have that $a_n(\chi_{c,k}^{mot}(X)) = a_n(\chi_{c,L}^{mot}(X_L)) = \chi_{c,L}^{mot}(X_L^{(n)}) = \chi_{c,k}^{mot}(X^{(n)})$ and so $X$ is symmetrisable. 
\end{proof}

\begin{cor}\label{cor:evensev}
    Let $X/k$ be a Severi-Brauer variety of even dimension $n$. Then $X$ is symmetrisable.
\end{cor}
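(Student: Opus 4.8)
The plan is to reduce the statement to the already-established case of projective space by base-changing along a splitting field of odd degree. Recall that a Severi--Brauer variety $X$ of dimension $n$ over $k$ is classified by a central simple $k$-algebra $A$ of degree $n+1$, and that a finite field extension $L/k$ satisfies $X_L \cong \P^n_L$ precisely when $L$ is a splitting field of $A$. Writing $A \cong M_r(D)$ with $D$ a central division $k$-algebra of degree $d = \mathrm{ind}(A)$, one has $n+1 = rd$, so $d \mid n+1$. When $n$ is even, $n+1$ is odd, and hence $d$ is odd as well.

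The next step is to exhibit a \emph{separable} splitting field of odd degree. Here I would take a maximal separable subfield $L \subseteq D$: it satisfies $[L:k] = d$ and splits $D$, hence splits $A$, so that $X_L \cong \P^n_L$ with $[L:k] = d$ odd. (Equivalently, one may invoke the classical fact that the index of a central simple algebra is the greatest common divisor of the degrees of its finite separable splitting fields, and that this gcd is attained.)

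Finally, $\P^n_k$ is $\K_0$-étale linear by Example \ref{examples:k0etlin}, hence symmetrisable by Theorem \ref{theorem:euler-char-symmetric-power-linear-variety}. Since $X_L \cong \P^n_L \cong (\P^n_k)_L$ with $[L:k]$ odd, Corollary \ref{cor:oddextensions} applied with $Y = \P^n_k$ yields that $X$ is symmetrisable, which is the claim.

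The only genuine input beyond the results already in the paper is from the theory of central simple algebras, namely that $\mathrm{ind}(A) \mid \deg(A)$ and that a separable splitting field of degree equal to the index exists; both are classical, so I do not expect any real obstacle. It is worth noting that the parity hypothesis is essential: for odd-dimensional Severi--Brauer varieties (for instance, a conic without a rational point, where the index is $2$) there need be no odd-degree splitting field, and this argument breaks down — consistent with the fact that, in the excerpt, the symmetrisability of all curves is deferred to forthcoming work.
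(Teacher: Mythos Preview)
Your proof is correct and follows essentially the same route as the paper: both reduce to $\P^n$ via an odd-degree separable splitting field and then invoke Corollary~\ref{cor:oddextensions}. The only difference is that you spell out \emph{why} such a splitting field exists (via $\mathrm{ind}(A)\mid\deg(A)=n+1$ odd, and a maximal subfield of the underlying division algebra), whereas the paper simply asserts this; since $k$ has characteristic $0$ throughout, separability is automatic, so your extra care there is harmless but not strictly needed.
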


\begin{proof}
Since $X$ is split by an odd degree extension by \cite[Proposition 4.5.10]{Poonen17}, there exists an odd degree separable field extension $L/k$, such that $X_L \cong \mathbb{P}^n_L$. As $\mathbb{P}^n$ is $\K_0$-étale linear, it is symmetrisable by Theorem $\ref{theorem:euler-char-symmetric-power-linear-variety}$, so the result follows by the above.
\end{proof}

\begin{cor}
    Let $X/k$ be a variety and let $L/k$ be an odd dimensional extension. Suppose that $X$ is symmetrisable (or equivalently by Corollary $\ref{cor:oddextensions}$, $X_L$ is symmetrisable). Then the Weil restriction $\mathrm{Res}_{L/k}(X_L)$ is symmetrisable.
\end{cor}

\begin{proof}
    Note that $\mathrm{Res}_{L/k}(X_L)_L \cong \prod_{i=1}^{[L:k]} X_L$, and $X$ is symmetrisable if and only if $X_L$ is by Corollary \ref{cor:oddextensions}. The result follows instantly.
\end{proof}

\section{Computations of symmetric powers of symmetrisable varieties}
    \label{section:computations}
    
In this section, we first show that some natural classes of varieties are symmetrisable, and then compute the compactly supported $\A^1$-Euler characteristics of symmetric powers of varieties using the power structure on $\GW(k)$.

\subsection{Grassmannians}
    \label{subsection:examples-of-linear-varieties}
    
It is well known that Grassmannians are linear varieties in the sense of Definition \ref{def:linear}, see for example \cite[Example 2.2]{joshua01-linearvarieties}, and are therefore symmetrisable by Theorem $\ref{theorem:euler-char-symmetric-power-linear-variety}$. In this subsection, we compute the Euler characteristic of symmetric powers of Grassmannians. Brazelton, McKean and Pauli \cite[Theorem 8.4]{brazelton23-bezoutians} computed the $\A^1$-Euler characteristic of Grassmannians over a field $k$ which admits a real embedding $k \ra \R$ by using a theorem of Bachmann and Wickelgren \cite[Theorem 5.11]{bachmann23-eulerclasses}.
We give a purely combinatorial proof which works over any field of characteristic $\neq 2$, so without needing the condition that the field admits a real embedding. We will use Losanitsch's triangle, OEIS-sequence \href{https://oeis.org/A034851}{A034851}, which is a summand of Pascal's triangle.
It is a well-known combinatorial object constructed for example by Cigler \cite[Section 3]{CiglerLosanitsch}.
The $d$-th entry in the $r$-th row is denoted by $e(d,r)$ and we define $o(d,r) = \binom{r}{d} - e(d,r)$.
The numbers $e(d,r)$ and $o(d,r)$ satisfy the following recurrence relations:
\begin{enumerate}[nolistsep]
    \item if $d$ is even, then
    \begin{align*}
        e(d-1,r-1) + e(d,r-1) & = e(d,r) \\
        o(d-1,r-1) + o(d,r-1) & = o(d,r)\textnormal{; and}
    \end{align*}
    \item if $d$ is odd, then
    \begin{align*}
        e(d-1,r-1) + o(d,r-1) & = e(d,r) \\
        o(d-1,r-1) + e(d,r-1) & = o(d,r).
    \end{align*}
\end{enumerate}
Closed formulae for the entries of Losanitsch's triangle and its complement in Pascal's triangle are given by
\begin{align*}
    e(d,r) & = \frac{1}{2}\left(\binom{r}{d} +  \mathbf{1}_A(r,d)\binom{\lfloor r/2 \rfloor}{\lfloor d/2 \rfloor}\right) \\
    o(d,r) & = \frac{1}{2}\left(\binom{r}{d} -  \mathbf{1}_A(r,d)\binom{\lfloor r/2 \rfloor}{\lfloor d/2 \rfloor}\right),
\end{align*}
where $A \subseteq \N \times \N$ is the subset of pairs $(r,d)$ with either $r$ odd or $r$ and $d$ both even, and $\mathbf{1}_A$ is its indicator function.
The closed formula is proved by induction from the recurrence relations.

\begin{thm} \label{thm:euler-char-grassmannian}
    The compactly supported $\A^1$-Euler characteristic of the Grassmannian $\Gr(d,r)$ is
    \begin{equation}
        \chimot(\Gr(d,r)) = e(d,r) \langle 1 \rangle + o(d,r) \langle -1 \rangle,
    \end{equation}
    with $e(d,r)$ and $o(d,r)$ as above.
\end{thm}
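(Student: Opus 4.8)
The plan is to compute $\chimotc(\Gr(d,r))$ by decomposing the Grassmannian via its Schubert cell structure and tracking the Euler characteristics of the affine cells. Recall that $\Gr(d,r)$ admits a cellular decomposition into Schubert cells indexed by partitions $\lambda$ fitting in a $d \times (r-d)$ box, where the cell $\sigma_\lambda$ is isomorphic to $\A^{|\lambda|}_k$. Since $\chimotc$ is a motivic measure and $\chimotc(\A^m) = \langle (-1)^m \rangle = \langle 1 \rangle$ if $m$ is even and $\langle -1 \rangle$ if $m$ is odd, we obtain
\begin{equation*}
    \chimotc(\Gr(d,r)) = \sum_{\lambda \subseteq d \times (r-d)} \langle (-1)^{|\lambda|} \rangle
    = \#\{\lambda : |\lambda| \text{ even}\}\,\langle 1 \rangle + \#\{\lambda : |\lambda| \text{ odd}\}\,\langle -1 \rangle.
\end{equation*}
So the whole problem reduces to the combinatorial identity that the number of partitions inside a $d \times (r-d)$ box with $|\lambda|$ even equals $e(d,r)$, the $d$-th entry of the $r$-th row of Losanitsch's triangle, and similarly for the odd count and $o(d,r)$.

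To prove this combinatorial claim, I would argue by induction on $r$ using the recurrence relations for $e(d,r)$ and $o(d,r)$ recorded just above the theorem statement. Write $E(d,r)$ (resp. $O(d,r)$) for the number of partitions in a $d \times (r-d)$ box of even (resp. odd) size; note $E(d,r) + O(d,r) = \binom{r}{d}$, the total number of such partitions. Partitions in a $d \times (r-d)$ box split according to whether the first column has length $d$ (i.e. $\lambda_1' = d$) or length $< d$. Removing the first column when it has full length $d$ gives a bijection with partitions in a $d \times (r-1-d)$ box, but shifts $|\lambda|$ by $d$; partitions with first column of length $< d$ biject with partitions in a $(d-1) \times (r-d)$ box with no size shift. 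This yields
\begin{align*}
    E(d,r) &= E(d-1, r-1) + \big(\text{parity-shift by } d \text{ of } E(d, r-1)\big),\\
    O(d,r) &= O(d-1, r-1) + \big(\text{parity-shift by } d \text{ of } O(d, r-1)\big),
\end{align*}
where ``parity-shift by $d$'' means the term $E(d,r-1)$ when $d$ is even and $O(d,r-1)$ when $d$ is odd (and vice versa for the $O$ recurrence). These are exactly the recurrences defining $e(d,r)$ and $o(d,r)$, so matching base cases $r = d$ and $d = 0$ (a single empty partition, of even size) completes the induction.

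The main obstacle is bookkeeping the parity shift correctly: one must be careful that deleting a full column of height $d$ changes the size of the partition by $d$, so the even/odd count interchanges precisely when $d$ is odd, which is what produces the ``twisted'' branch of the Losanitsch recurrence. Everything else — that $\chimotc$ is additive on the locally closed cell stratification and multiplicative, that $\chimotc(\A^m)$ depends only on the parity of $m$ — is immediate from the properties of $\chimotc$ recalled in Section~\ref{subsection:compactly-supported-quadratic-euler-characteristic}, together with the fact that a cellular decomposition gives $[\Gr(d,r)] = \sum_\lambda [\A^{|\lambda|}]$ in $\grvark$ exactly as in the proof of Lemma~\ref{lem:cellularetlin}.
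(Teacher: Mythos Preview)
Your proof is correct and is essentially the paper's argument: both reduce to the same recurrence and match it against the Losanitsch relations. The only cosmetic difference is that the paper phrases the recurrence geometrically---using the closed immersion $\Gr(d-1,r-1)\hookrightarrow\Gr(d,r)$ with open complement an $\A^d$-bundle over $\Gr(d,r-1)$, giving $\chimotc(\Gr(d,r))=\chimotc(\Gr(d-1,r-1))+\langle(-1)^d\rangle\chimotc(\Gr(d,r-1))$---and runs the induction directly on $\chimotc$, whereas you first pass through the Schubert cells and carry out the identical induction on the partition-parity counts $E(d,r),O(d,r)$; your column-deletion bijection is exactly this geometric decomposition read off on the cells.
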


\begin{proof}
    The closed immersion $\Gr(d-1,r-1) \ra \Gr(d,r)$, yields the recursive formula
    \begin{equation*}
        \chimot(\Gr(d,r)) = \chimot(\Gr(d-1,r-1)) + \langle (-1)^d \rangle \chimotc(\Gr(d,r-1)),
    \end{equation*}
    If $d$ is even, then
    \begin{align*}
        \chimot(\Gr(d,r))
            & = \chimot(\Gr(d-1,r-1)) + \chimotc(\Gr(d,r-1)), \\
            & = (e(d-1,r-1) + e(d,r-1))\langle 1 \rangle \\
            & \quad + (o(d-1,r-1) + o(d,r-1))\langle -1 \rangle.
    \end{align*}
    If $d$ is odd, then
    \begin{align*}
        \chimot(\Gr(d,r))
            & = \chimot(\Gr(d-1,r-1)) + \langle -1 \rangle \chimotc(\Gr(d,r-1)), \\
            & = (e(d-1,r-1) + o(d,r-1))\langle 1 \rangle \\
            & \quad + (o(d-1,r-1) + e(d,r-1))\langle -1 \rangle.
    \end{align*}
    The result follows from the recurrence relations for $e(d,r)$ and $o(d,r)$ above.    
\end{proof}

\begin{thm} \label{thm:euler-char-symmetric-power-grassmannian}
    The compactly supported $\A^1$-Euler characteristic of the $n$-th symmetric power of the Grassmannian $\Gr(d,r)$ is given by
    \begin{equation*}
        \chimot(\Gr(d,r)^{(n)}) = \sum_{i=0}^{n} \binom{e(d,r) + i - 1}{i}\binom{o(d,r) + n - i - 1}{n - i} \langle (-1)^{n-i} \rangle.
    \end{equation*}
\end{thm}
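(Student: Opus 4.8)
The plan is to combine the two main results already established: Theorem~\ref{thm:euler-char-grassmannian}, which computes $\chimotc(\Gr(d,r)) = e(d,r)\langle 1\rangle + o(d,r)\langle -1\rangle$, and Theorem~\ref{theorem:euler-char-symmetric-power-linear-variety}, which says that since $\Gr(d,r)$ is a linear variety (hence $\K_0$-\'etale linear by Lemma~\ref{lineararek0etlin}), it is symmetrisable, i.e. $\chimotc(\Gr(d,r)^{(n)}) = a_n(\chimotc(\Gr(d,r)))$. So the whole problem reduces to evaluating $a_n\bigl(e(d,r)\langle 1\rangle + o(d,r)\langle -1\rangle\bigr)$ using only formal properties of the power structure on $\GW(k)$.

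First I would apply property (3) of Definition~\ref{def:power-structure} (the additivity/convolution formula) to the sum $e(d,r)\langle 1 \rangle + o(d,r)\langle -1\rangle$, obtaining
\begin{equation*}
    a_n\bigl(e(d,r)\langle 1\rangle + o(d,r)\langle -1\rangle\bigr) = \sum_{i=0}^n a_i\bigl(e(d,r)\langle 1\rangle\bigr)\, a_{n-i}\bigl(o(d,r)\langle -1\rangle\bigr).
\end{equation*}
Then I would invoke Lemma~\ref{lemma:power-structure-on-basic-classes} (the case $i=0$ for the first factor and $i=1$ for the second) to rewrite each term: $a_i(e(d,r)\langle 1\rangle) = \binom{e(d,r)+i-1}{i}\langle 1\rangle$ and $a_{n-i}(o(d,r)\langle -1\rangle) = \binom{o(d,r)+n-i-1}{n-i}\langle (-1)^{n-i}\rangle$. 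Multiplying these in $\GW(k)$ and using $\langle 1\rangle\cdot\langle (-1)^{n-i}\rangle = \langle (-1)^{n-i}\rangle$ (relation (2) for $\langle\cdot\rangle$) gives exactly the claimed summand, and summing over $i$ completes the computation.

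There is essentially no hard step here; the only things to be careful about are bookkeeping. One should note explicitly that the binomial coefficients $\binom{e(d,r)+i-1}{i}$ are integers, so the scalar multiplication by them in $\GW(k)$ is well-defined, and that Lemma~\ref{lemma:power-structure-on-basic-classes} was stated for all $m,n\in\N$ including the degenerate cases $m=0$ or $n=0$ (where both sides are $0$ or $\langle 1\rangle$ respectively, consistent with $\binom{-1}{0}=1$ and $\binom{m-1}{0}=1$), so no edge case is lost. The mildest subtlety is that $e(d,r)$ or $o(d,r)$ could be zero for small $d$ or $r$, but since Lemma~\ref{lemma:power-structure-on-basic-classes} already handles $m\in\N$ uniformly and $a_j(0)=0$ for $j>0$ by property (1) of Definition~\ref{def:power-structure}, the formula still holds verbatim. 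Hence the proof is just the two-line substitution above, and I would present it as such.
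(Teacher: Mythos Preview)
Your proposal is correct and follows essentially the same approach as the paper's proof: both use that Grassmannians are linear (hence $\K_0$-\'etale linear and symmetrisable), apply Theorem~\ref{thm:euler-char-grassmannian}, expand $a_n$ via the convolution formula, and then invoke Lemma~\ref{lemma:power-structure-on-basic-classes}. The only cosmetic difference is the choice of summation index (the paper writes $a_{n-i}(e(d,r)\langle 1\rangle)a_i(o(d,r)\langle -1\rangle)$, which is your sum reindexed by $i\mapsto n-i$), and your additional remarks on the $m=0$ edge case are a reasonable precaution not spelled out in the paper.
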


\begin{proof}
    Since Grassmannians are linear, they are $\K_0$-étale linear, so apply Theorem $\ref{theorem:euler-char-symmetric-power-linear-variety}$ to Theorem $\ref{thm:euler-char-grassmannian}$ to obtain
    \begin{align*}
        \chimot(\Gr(d,r)^{(n)}) & = a_n(\chimot(\Gr(d,r))) \\
            & = a_n \left( e(d,r) \langle 1 \rangle + o(d,r) \langle -1 \rangle \right) \\
            & = \sum_{i=0}^{n} a_{n-i}\left( e(d,r) \langle 1 \rangle \right) a_{i}\left( o(d,r) \langle -1 \rangle \right) \\
            & = \sum_{i=0}^{n} \binom{e(d,r) + n - i - 1}{n - i}\binom{o(d,r) + i - 1}{i} \langle (-1)^{i} \rangle,
    \end{align*}
    where we use Lemma \ref{lemma:power-structure-on-basic-classes} in order to go to the last line.
\end{proof}

\begin{cor} \label{cor:generating-series-grassmannian}
The generating series for the compactly supported $\A^1$-Euler characteristic of symmetric powers of $\Gr(d,r)$ is given by
\begin{equation*}
    \sum_{n=0}^\infty \chi_c^{\mathrm{mot}}(\mathrm{Gr}(d,r)^{(n)})t^n = (1-t)^{-e(d,r)} (1- \langle -1 \rangle t)^{-o(d,r)} \in \GW(k)[[t]].
\end{equation*}    
\end{cor}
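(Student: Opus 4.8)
The plan is to package the explicit formula from Theorem~\ref{thm:euler-char-symmetric-power-grassmannian} into a generating function by recognising the two inner binomial sums as coefficients of the respective factors. Concretely, recall the elementary identity
\begin{equation*}
    (1-t)^{-m} = \sum_{j=0}^{\infty} \binom{m+j-1}{j} t^j \in \Z[[t]],
\end{equation*}
valid for any integer $m \geq 0$, which is just the generalised binomial theorem. Applying this with $m = e(d,r)$ gives the series $(1-t)^{-e(d,r)} = \sum_{j} \binom{e(d,r)+j-1}{j} t^j$, and the analogous computation with the variable $\langle -1 \rangle t$ in place of $t$ and exponent $o(d,r)$ gives
\begin{equation*}
    (1-\langle -1 \rangle t)^{-o(d,r)} = \sum_{i=0}^{\infty} \binom{o(d,r)+i-1}{i} \langle -1 \rangle^i t^i = \sum_{i=0}^{\infty} \binom{o(d,r)+i-1}{i} \langle (-1)^i \rangle t^i,
\end{equation*}
where the last step uses $\langle -1 \rangle^i = \langle (-1)^i \rangle$ in $\GW(k)$ from the relation $\langle a \rangle \langle b \rangle = \langle ab \rangle$. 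Both of these are genuine elements of $\GW(k)[[t]]$ since $\GW(k)$ is a commutative ring containing $\Z$.

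Next I would multiply these two power series together in $\GW(k)[[t]]$ and read off the coefficient of $t^n$:
\begin{equation*}
    \left[ (1-t)^{-e(d,r)} (1-\langle -1 \rangle t)^{-o(d,r)} \right]_{t^n} = \sum_{i=0}^{n} \binom{e(d,r)+n-i-1}{n-i} \binom{o(d,r)+i-1}{i} \langle (-1)^i \rangle.
\end{equation*}
This is exactly the expression for $\chimot(\Gr(d,r)^{(n)})$ given in Theorem~\ref{thm:euler-char-symmetric-power-grassmannian}, so equating coefficients of $t^n$ for every $n \geq 0$ yields the claimed identity of power series. (One should note that the exponents are non-negative integers: $e(d,r) \geq 0$ trivially, and $o(d,r) = \binom{r}{d} - e(d,r) \geq 0$ because $e(d,r)$ is a summand of the binomial coefficient $\binom{r}{d}$, as recorded in the discussion of Losanitsch's triangle preceding Theorem~\ref{thm:euler-char-grassmannian}.)

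There is essentially no hard step here: the only mild subtlety is making sure the formal manipulation of the binomial series is legitimate over the commutative ring $\GW(k)$ rather than over $\Z$, but since $(1-t)^{-m}$ and $(1-\langle -1 \rangle t)^{-m}$ are defined by their series expansions and the power structure's $a_n$ functions are built precisely so that $\sum_n a_n(q) t^n$ behaves multiplicatively in $q$, everything goes through formally. Alternatively, and perhaps more conceptually, one could avoid Theorem~\ref{thm:euler-char-symmetric-power-grassmannian} entirely and argue directly: by Theorem~\ref{theorem:euler-char-symmetric-power-linear-variety} the generating series equals $\sum_n a_n(\chimot(\Gr(d,r))) t^n$, which by the defining axiom $a_n(r+s) = \sum_{i} a_i(r) a_{n-i}(s)$ of the power structure (Definition~\ref{def:power-structure}) factors as the product of $\sum_n a_n(e(d,r)\langle 1 \rangle) t^n$ and $\sum_n a_n(o(d,r)\langle -1 \rangle) t^n$; Lemma~\ref{lemma:power-structure-on-basic-classes} then identifies these two series as $(1-t)^{-e(d,r)}$ and $(1-\langle -1 \rangle t)^{-o(d,r)}$ respectively. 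I would present this second route as the main proof, as it makes transparent why the answer is a product of two factors of the expected shape.
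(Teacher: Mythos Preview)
Your proposal is correct and your first route is exactly the paper's approach: the paper simply says the corollary follows from Theorem~\ref{thm:euler-char-symmetric-power-grassmannian} by taking the Taylor series expansions of $(1-t)^{-e(d,r)}$ and $(1-\langle -1\rangle t)^{-o(d,r)}$. Your alternative route via Theorem~\ref{theorem:euler-char-symmetric-power-linear-variety}, the additivity axiom, and Lemma~\ref{lemma:power-structure-on-basic-classes} is also fine, but it is essentially just the proof of Theorem~\ref{thm:euler-char-symmetric-power-grassmannian} folded into the argument rather than a genuinely different approach.
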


\begin{proof}
    This follows immediately from Theorem \ref{thm:euler-char-symmetric-power-grassmannian} by taking the Taylor series expansion of the terms $(1-t)^{-e(d,r)}$ and $(1- \langle -1 \rangle t)^{-o(d,r)}$.
\end{proof}
\begin{cor}
     Take $k=\mathbb{R}$ to be our base field, and consider $\mathrm{Gr}(d,r)/k$. For $T$ a CW-complex, write $e_c(T)$ to mean the compactly supported Euler characteristic of $T$. Then compactly supported Euler characteristics of the complex and real points of $\mathrm{Gr}(d,r)^{(n)}$ respectively fit into the following power series in $\Z[[t]]$
\begin{align*}
        \sum_{n=0}^\infty e_c(\mathrm{Gr}(d,r)^{(n)})(\mathbb{C}))^n &= (1-t)^{n \choose d},\\
        \sum_{n=0}^\infty e(\mathrm{Gr}(d,r)^{(n)}(\mathbb{R}))t^n &= (1-t)^{-e(d,r)} (1+t)^{-o(d,r)}.
\end{align*}
\end{cor}
\begin{proof}
   The first result follows by applying the homomorphism $\mathrm{rank}: \GW(\mathbb{R}) \to \Z$ and using \cite[Remark 2.3.1]{levine20-aspects} to see that $\mathrm{rank}(\chi_c^{\mathrm{mot}}(X)) = e(X(\mathbb{C}))$ (see \cite[Theorem 2.14]{pajwani22-YZ} for the extension of this result to the compactly supported case). The result about the real points follows by instead applying $\mathrm{sign}: \GW(\mathbb{R}) \to \Z$ and using \cite[Remark 2.3.1]{levine20-aspects} which shows that $\mathrm{sign}(\chi_c^{\mathrm{mot}}(X)) = e_c(X(\mathbb{R}))$ (see also \cite[Theorem 2.20]{pajwani22-YZ} for the extension of this result to the compactly supported case).
\end{proof}

\subsection{del Pezzo surfaces}
    \label{subsection:del-pezzo-surfaces}

In this subsection, we use the techniques from Section \ref{section:symmetrisable-varieties} to show a large class of del Pezzo surfaces are symmetrisable. 
These are a class of surfaces of arithmetic interest, famous for the fact that over the complex numbers they contain a finite number of exceptional curves lying on them. This number is well known from enumerative geometry, for example: smooth projective cubic surfaces are del Pezzo surfaces of degree $3$, the exceptional curves are precisely the lines on the surface, and it is well known that exactly $27$ lines lie on the surface over the complex numbers. Quadratically enriched counts of the number of exceptional curves on del Pezzo surfaces were achieved by Kass-Wickelgren \cite[Theorem 2]{wickelgren21-lines} in degree $3$, by Darwin \cite[Theorem 1.2]{Darwin} in degree $4$, and these were generalised to the degree $\geq 3$ case in \cite{Kass23-enrichedcount} and \cite{Kasss23-orientation} by Kass, Levine, Solomon and Wickelgren.
\begin{defn}
    A \emph{del Pezzo surface} is a smooth projective variety of dimension $2$ whose anticanonical bundle is ample. The \emph{degree} of a del Pezzo surface is the self intersection number of the anticanonical class. An \emph{exceptional curve} on a del Pezzo surface is a curve with self intersection number $-1$. 
\end{defn}
The following classification of del Pezzo surfaces is due to Manin over algebraically closed fields, which we can weaken to separably closed fields by a result of Coombes. 
\begin{thm}[Theorem 24.4 of \cite{Manin}, Theorem 1 of \cite{Coombes}]
    Let $\kbar$ be a separably closed field and let $X/\kbar$ be a del Pezzo surface of degree $d$. Then $1 \leq d \leq 9$ and either:
    \begin{enumerate}
        \item $X \cong \mathbb{P}^1 \times \mathbb{P}^1$ and $d=8$.
        \item $X$ is isomorphic to the blow up of $\mathbb{P}^2$ at $9-d$ points in general position.
    \end{enumerate}
\end{thm}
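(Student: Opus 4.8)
This is the classical structure theorem for del Pezzo surfaces, and the plan is to deduce it from the theory of rational surfaces, so I will only indicate the main steps; throughout $X/\kbar$ is smooth projective of dimension $2$ with $-K_X$ ample, and $d := K_X^2$. Since the self-intersection of an ample divisor on a surface is positive, $d \geq 1$. In characteristic $0$, Kodaira vanishing applied to $\mathcal{O}_X = K_X \otimes (-K_X)$ gives $h^i(X,\mathcal{O}_X) = 0$ for $i > 0$, hence $\chi(\mathcal{O}_X) = 1$; a similar positivity argument shows $mK_X$ is not effective for $m \geq 1$, so $q(X) = P_2(X) = 0$ and $X$ is rational by Castelnuovo's criterion, whence $b_1(X) = 0$ and $b_2(X) = \mathrm{rk}\,\mathrm{Pic}(X) \geq 1$. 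Noether's formula $12\chi(\mathcal{O}_X) = K_X^2 + e(X)$ then reads $12 = d + 2 + b_2(X)$, so $d \leq 9$. Next, if $X$ has a $(-1)$-curve $E$, contract it by Castelnuovo's criterion to get $\pi \colon X \to X'$ with $X'$ smooth projective; using $K_X = \pi^*K_{X'} + E$ one checks $(-K_{X'})^2 = d+1 > 0$ and $-K_{X'} \cdot C' > 0$ for every irreducible curve $C' \subset X'$, so $-K_{X'}$ is ample by Nakai--Moishezon, i.e. $X'$ is del Pezzo of degree $d+1$. As the degree is at most $9$, iterating terminates at a minimal del Pezzo surface $X_0$.

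\emph{Identifying the minimal model and reconstructing $X$.} The minimal rational surfaces are $\P^2$ and the Hirzebruch surfaces $\F_n$ with $n = 0$ or $n \geq 2$; of these $-K$ is ample only for $\P^2$ (degree $9$) and $\F_0 = \P^1 \times \P^1$ (degree $8$), since for $n \geq 2$ the section $C_0 \subset \F_n$ has $-K \cdot C_0 = 2-n \leq 0$. Hence $X_0 \in \{\P^2, \P^1 \times \P^1\}$. If $X_0 = \P^1 \times \P^1$ and $X \neq X_0$, then at least one point has been blown up, and the classical isomorphism $\mathrm{Bl}_p(\P^1 \times \P^1) \cong \mathrm{Bl}_{q_1, q_2}(\P^2)$ lets us rewrite the blow-down sequence $X \to \cdots \to X_0$ as one starting from $\P^2$. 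So either $X \cong \P^1 \times \P^1$ (and $d = 8$), or $X$ is the blow-up of $\P^2$ at $N$ points, a priori possibly infinitely near; since $K_X^2 = 9-N$ we get $N = 9-d \leq 8$.

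\emph{General position.} It remains to show the $9-d$ centres can be taken distinct and in general position: no three collinear, no six on a conic, and, when $d=1$, no eight on a cubic with a node at one of them. Each forbidden configuration yields an \emph{effective} class $C \in \mathrm{Pic}(X)$ with $C^2 = -2$ and $K_X \cdot C = 0$: writing $h, e_1, \dots, e_N$ for the standard basis, three collinear points give $C = h - e_{i_1} - e_{i_2} - e_{i_3}$, six on a conic give $2h - e_{i_1} - \cdots - e_{i_6}$, a point $p_j$ infinitely near $p_i$ gives $e_i - e_j$, and eight on a nodal cubic give $3h - 2e_1 - e_2 - \cdots - e_8$; in every case $-K_X \cdot C = 0$, contradicting ampleness of $-K_X$. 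This forces general position, completing the classification.

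\emph{Main obstacle.} The heaviest black boxes are the classification of minimal rational surfaces and the elementary-transformation reduction in the $\P^1 \times \P^1$ case. The genuinely delicate step, though not strictly needed for the statement as phrased, is the converse of the previous paragraph: that general position of the $9-d$ points in fact makes $-K_X$ ample. Establishing this requires a Nakai--Moishezon check in which one must list all irreducible curve classes with $-K_X \cdot C \leq 0$ and rule them out from the genericity hypotheses --- a finite but intricate case analysis. For our purposes we simply cite Demazure's \emph{Théorème 1}.
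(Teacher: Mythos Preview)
The paper does not prove this theorem at all: it is stated purely as a citation of Demazure's result and is used as a black box in the subsequent discussion of del Pezzo surfaces. There is therefore no ``paper's proof'' to compare against; you have supplied an argument where the authors supply only a reference.

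That said, your sketch is the standard one and is essentially correct in the paper's setting (characteristic $0$, which you explicitly invoke for Kodaira vanishing). The logical flow --- rationality via Castelnuovo, the degree bound via Noether's formula, reduction to a minimal model by contracting $(-1)$-curves, identification of the minimal del Pezzo surfaces among $\P^2$ and the Hirzebruch surfaces, the elementary transformation $\mathrm{Bl}_p(\P^1\times\P^1)\cong\mathrm{Bl}_{q_1,q_2}(\P^2)$, and finally ruling out non-general configurations by exhibiting effective $(-2)$-classes orthogonal to $-K_X$ --- is the textbook route. Two small remarks: first, Demazure's theorem holds over any algebraically closed field, so your reliance on Kodaira vanishing is a genuine restriction, though one consistent with the paper's standing hypothesis; second, your closing paragraph rightly flags that the converse (general position $\Rightarrow$ $-K_X$ ample) is the more delicate direction, but since the theorem as stated only asserts the structure of a given del Pezzo surface, you do not need it here.
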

While del Pezzo surfaces can be arithmetically very complicated, once we base change to the separable closure of our field, they are linear, so we would expect large classes of del Pezzo surfaces to be symmetrisable. 

\begin{thm} \label{thm:del-pezzo-deggeq5-symmetrisable}
    Let $X/k$ be a del Pezzo surface of degree $\geq 5$ such that $X(k) \neq \emptyset$. Then $X$ is symmetrisable.
\end{thm}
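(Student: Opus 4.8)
The plan is to reduce the claim to the fact, established earlier in the paper, that $\K_0$-étale linear varieties are symmetrisable (Theorem \ref{theorem:euler-char-symmetric-power-linear-variety}), together with the blow-up stability result (Corollary \ref{cor:blowupsymm}). The strategy is to show that a del Pezzo surface $X/k$ of degree $d \geq 5$ with $X(k) \neq \emptyset$ is in fact $\K_0$-étale linear, or at worst can be built from $\P^2_k$ by a sequence of blow-ups along smooth closed subschemes that are themselves $\K_0$-étale linear (hence symmetrisable).

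First I would recall the classical structure theory of del Pezzo surfaces of degree $\geq 5$ over a non-closed field $k$. Over $\kbar$ such a surface is either $\P^1 \times \P^1$ (degree $8$) or the blow-up of $\P^2$ at $9-d \leq 4$ points in general position. When $X(k) \neq \emptyset$, one can run the minimal model program / use the Enriques--Manin--Iskovskikh classification to show that $X$ is $k$-rational; more concretely, for $d \geq 5$ a $k$-point allows one to realise $X$ as the blow-up of $\P^2_k$ (or of a minimal del Pezzo surface that is already $\P^2_k$ or $\P^1\times\P^1$ over $k$) along a closed subscheme $Z$ which is a disjoint union of $\Spec$ of finite étale $k$-algebras — the Galois orbits of the $9-d$ blown-up points. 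I would handle the cases $d=5,6,7,8,9$ individually: $d=9$ gives $X \cong \P^2_k$ (a $k$-point plus the degree-$9$ del Pezzo condition forces this via $\mathrm{Pic}$); $d=8$ gives either $\P^1\times\P^1$ or $\mathrm{Bl}_{\mathrm{pt}}\P^2$ over $k$, both linear; and $d \in \{5,6,7\}$ gives $\mathrm{Bl}_Z \P^2_k$ with $Z$ a zero-dimensional étale $k$-scheme of length $9-d$ in general position, using the $k$-point to descend the blow-up structure.

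Once $X$ is exhibited as $\mathrm{Bl}_Z(Y)$ with $Y \in \{\P^2_k, \P^1_k \times \P^1_k\}$ and $Z$ a smooth zero-dimensional $\K_0$-étale linear subscheme, the conclusion is immediate: $Y$ is $\K_0$-étale linear (Example \ref{examples:k0etlin}, Lemma \ref{lemma:linear-bundles-are-linear}), hence symmetrisable by Theorem \ref{theorem:euler-char-symmetric-power-linear-variety}; the centre $Z$ is symmetrisable since $\K_0(\Et_k) \subseteq \K_0(\Sym_k)$; and then $\mathrm{Bl}_Z(Y)$ is symmetrisable by Corollary \ref{cor:blowupsymm}. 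Alternatively, and more cleanly, I can simply observe $[X] = [\mathrm{Bl}_Z Y] = [Y] - [Z] + [E] \in \grvark$ where $E$ is a $\P^1$-bundle over $Z$, so $[X] \in \K_0(\EtLin_k)$ directly, and apply Theorem \ref{theorem:euler-char-symmetric-power-linear-variety} — showing $X$ is in fact $\K_0$-étale linear, not merely symmetrisable.

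The main obstacle is the descent/geometry step: proving that a $k$-rational point on a degree $\geq 5$ del Pezzo surface actually produces a $k$-rational blow-down to $\P^2_k$ (or $\P^1_k\times\P^1_k$) with centre defined over $k$ as an étale scheme. This requires some care with the Galois action on the Picard group and the configuration of exceptional curves — in particular that for $d \geq 5$ the relevant linear systems are defined over $k$ once there is a rational point, so that the blow-down is a genuine $k$-morphism. I expect to cite the standard references (e.g. the del Pezzo chapters of Manin's *Cubic Forms* or Várilly-Alvarado's survey) for this classical input rather than reprove it, and then the Grothendieck-ring bookkeeping is routine given the machinery already set up in Section \ref{subsection:linear-varieties}.
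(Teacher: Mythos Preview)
Your overall strategy---case-by-case structure theory of del Pezzo surfaces combined with blow-up stability (Corollary~\ref{cor:blowupsymm})---is exactly what the paper does. The degree $9$, $8$, and $7$ cases work as you describe.

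However, your claim that for $d \in \{5,6\}$ a $k$-point forces $X \cong \mathrm{Bl}_Z(\P^2_k)$ with $Z$ a length-$(9-d)$ \'etale $k$-scheme is too strong, and the standard references you plan to cite do not say this. For $d=6$ the six $(-1)$-curves form a hexagon with exactly two triples of pairwise disjoint curves; if the Galois action swaps these triples there is no Galois-stable triple to contract, so $X$ is \emph{not} a blow-up of $\P^2_k$ over $k$. A similar obstruction occurs for $d=5$ when the Galois action on the Petersen configuration is sufficiently large. The paper handles this by a more delicate argument: for $d=6$ it cases on whether the given $k$-point lies on zero, one, or two exceptional curves, and in each case blows up or blows down to reach a degree $5$, $7$, or $8$ surface already treated; for $d=5$ it invokes \cite[Proposition~9.4.20, Theorem~9.4.29]{Poonen17} to get a chain of blow-ups \emph{and} blow-downs connecting $X$ to $\P^2_k$, each step having symmetrisable centre. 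Since Corollary~\ref{cor:blowupsymm} is an ``if and only if'', blow-downs transfer symmetrisability just as well as blow-ups, and the argument goes through. Your proposal is thus correct in spirit but needs this two-directional birational bookkeeping in degrees $5$ and $6$; a single blow-up of $\P^2_k$ does not suffice.
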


\begin{proof}
     This proceeds by checking on a case by case basis that the conditions we have already established for $X$ to be symmetrisable for such del Pezzo surfaces. We appeal to \cite[Section 9.4]{Poonen17}, and we sketch the main results here.
    
    Suppose $X$ is a del Pezzo surface of degree $9$. Then $X$ is an even dimensional Severi Brauer variety, and since $X(k) \neq \emptyset$, we see $X \cong \mathbb{P}^2$, so $X$ is $\K_0$-étale linear and so symmetrisable by Theorem $\ref{theorem:euler-char-symmetric-power-linear-variety}$.

    Suppose $X$ is degree $8$. By \cite[Proposition 9.4.12]{Poonen17}, we see $X$ is either $\mathrm{Res}_{L/k}(C)$ where $L/k$ is a quadratic étale algebra and $C$ is a conic, or $\mathbb{P}^2$ blown up at a point. In the latter case the result holds by Corollary $\ref{cor:blowupsymm}$. Suppose $X$ is the Weil restriction of a conic. Then $X(k) \neq \emptyset$ implies that $C(L) \neq \emptyset$ so $C_L \cong \mathbb{P}^1_L$. Therefore $\mathrm{Res}_{L/k}(C) = \mathbb{P}^1 \times \mathbb{P}^1$, which is symmetrisable by Theorem $\ref{theorem:euler-char-symmetric-power-linear-variety}$.

    Suppose $X$ is degree $7$. Then by \cite[Proposition 9.4.17]{Poonen17}, $X$ is isomorphic to the blow up of $\mathbb{P}^2$ at a closed subscheme isomorphic to a finite étale algebra of degree $2$, so the result holds by Corollary $\ref{cor:blowupsymm}$. Note that this Proposition does not require $X$ to have a $k$ point.

    Suppose $X$ has degree $5$. Then by \cite[Proposition 9.4.20]{Poonen17}, if there exists a $k$-point of $X$ lying on none of the exceptional curves in $X$, we may obtain $X$ through blowing up and and blowing down $\mathbb{P}^2$. By \cite[Theorem 9.4.29]{Poonen17}, this is always the case unless $k=\F_2, \F_4$ or $\F_3$. Therefore $X$ is symmetrisable under our assumptions on $k$ by Corollary $\ref{cor:blowupsymm}$ unless $k=\F_3$, since we are assuming $k$ has characteristic not $2$. It only remains to tackle the case that $k=\F_3$ and all $k$ points of $X$ lie on the exceptional curves in $X$. In this case $X$ is isomorphic to $\mathbb{P}^2_k$ blown up at $4$ points by \cite[Theorem 9.4.29]{Poonen17}, which gives the result  by Corollary $\ref{cor:blowupsymm}$.

    Suppose $X$ is degree $6$, and let $x \in X(k)$. Using \cite[Proposition 9.4.20]{Poonen17}, if $x$ lies on exactly one exceptional curve $E_i$, then we may blow down $X$ to obtain a del Pezzo surface of degree $7$. Therefore, $X$ is symmetrisable by the degree $7$ case and Corollary $\ref{cor:blowupsymm}$. If $x$ lies on an intersection of exceptional curves, then we may blow down two other exceptional curves to obtain a del Pezzo surface of degree $8$, so $X$ is symmetrisable by Corollary \ref{cor:blowupsymm} and the degree $8$ case. Finally, if $x$ does not lie on any exceptional curves, we blow up $X$ at $x$ to obtain a del Pezzo surface of degree $5$, so $X$ is symmetrisable by the degree $5$ case and Corollary $\ref{cor:blowupsymm}$.
\end{proof}

\begin{rem}
Even without the assumption that $X(k) \neq \emptyset$, if $X$ is a del Pezzo surface of degree $5$ or $7$, then the above work and \cite[Theorem 9.4.29]{Poonen17} tells us that $X$ is birational to $\mathbb{P}^2$. If $k$ is infinite, the $k$ points of $\mathbb{P}^2$ are Zariski dense, and so we automatically have that $X(k) \neq \emptyset$. If $X$ is a del Pezzo surface of degree $9$, then it is a Severi--Brauer surface. We also see del Pezzo surfaces of degree $6$ which are given by blow ups of Severi--Brauer surfaces at a point are also symmetrisable by Corollary $\ref{cor:blowupsymm}$, so the above also holds for certain del Pezzo surfaces with no $k$-point.

    The reason we restrict to the degree $\geq 5$ case above is that for del Pezzo surfaces $X$ of degree $\geq 5$ such that $X(k) \neq \emptyset$, we have that $X$ is birational to $\mathbb{P}^2$, by the steps taken in the proof above. We can therefore obtain all such del Pezzo surfaces by iterating blow-ups and blow-downs, which do not change symmetrisability by Corollary $\ref{cor:blowupsymm}$. 
    
    When $X$ has degree $\leq 4$, this is no longer true: there exist del Pezzo surfaces of degree $\leq 4$ which are not birational to $\mathbb{P}^2$. However we can obtain similar results using Corollary $\ref{cor:blowupsymm}$ for any del Pezzo surface of degree $\leq 4$ which can be obtained via iterating blow ups and blow downs of $\mathbb{P}^2$.
\end{rem}

We can also use our results to show the following. 

\begin{thm} \label{thm:diagonal-cubic-surface-symmetrisable}
    Let $X/k$ be a diagonal cubic surface. Then $X$ is symmetrisable. 
\end{thm}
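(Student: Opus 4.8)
The plan is to show $[X]\in\K_0(\Sym_k)$ by decomposing $[X]$ in $\grvark$ into classes that are either $\K_0$-étale linear or classes of genus-$1$ curves (which are symmetrisable by Corollary~\ref{nonsmoothcurves}), using throughout that $\K_0(\Sym_k)$ is an abelian subgroup of $\grvark$ containing $\K_0(\EtLin_k)$, that it is a $\K_0(\EtLin_k)$-submodule (Corollary~\ref{cor:symmetrisables-are-linear-submodule}), and that it is stable under blow-ups in smooth symmetrisable centres (Corollary~\ref{cor:blowupsymm}).

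Write $X=\{a_0x_0^3+a_1x_1^3+a_2x_2^3+a_3x_3^3=0\}$ with all $a_i\neq0$. Intersecting with the hyperplane $\{x_2=0\}$ peels off a smooth plane cubic (genus $1$), and on the open complement $X$ is the affine diagonal cubic surface $Y=\{a_0u^3+a_1v^3+a_3t^3=-a_2\}\subset\A^3$; fibring $Y$ over the $u$-line, the fibres over the finite set $\{a_0u^3+a_2=0\}$ (an étale $k$-algebra) are the constant affine cone $\{a_3t^3+a_1v^3=0\}$, which is $\K_0$-étale linear, while over the complement one gets an isotrivial family of affine plane cubics, all of genus $1$, over $\A^1$ minus a finite set. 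Equivalently, projecting $X$ from $[0{:}0{:}0{:}1]\notin X$ exhibits it as a cyclic triple cover $p\colon X\to\mathbb{P}^2$, étale away from the smooth plane cubic $B=\{a_0y_0^3+a_1y_1^3+a_2y_2^3=0\}$ over which $p$ is an isomorphism, so $[X]=[B]+[P]$ with $[B]\in\K_0(\Sym_k)$ and $P\to\mathbb{P}^2\setminus B$ a $\mu_3$-torsor; the task is to show $[P]\in\K_0(\Sym_k)$. To this end I would stratify the base so as to split $P$ into products of strata with a fixed étale algebra together with genus-$1$ curve classes, exploiting the $\mathbb{G}_m$-bundle $\A^3\setminus\widehat E\to\mathbb{P}^2\setminus E$ (with $E\subset\mathbb{P}^2$ a plane cubic of genus $1$ and $\widehat E$ its affine cone), whose total space has class $([\A^1]-1)([\mathbb{P}^2]-[E])\in\K_0(\Sym_k)$ and along which the cubing map $\mathbb{G}_m\to\mathbb{G}_m$ factors, to control the fibres of the relevant triple covers.

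In parallel, one can reduce to the case $X(k)\neq\emptyset$: a generic $k$-line meets $X$ in a closed point of degree $1$ or $3$, and the transfer argument in the proof of Theorem~\ref{thm:basechangeodd} shows that $\GW(k)\to\GW(L)$ is injective when $[L:k]$ is odd, so it is enough to prove $X_L$ is symmetrisable for such an $L$, over which $X$ has a rational point. Moreover the $27$ lines of $X$ cannot all lie in even-size Galois orbits, so $X$ carries a line defined over an extension of odd degree; passing to it, $X$ becomes a conic bundle over $\mathbb{P}^1$ with five degenerate fibres, and the aim would be to blow down successively, via Corollary~\ref{cor:blowupsymm}, to a del Pezzo surface of degree $\geq5$ carrying a rational point, which is symmetrisable by Theorem~\ref{thm:del-pezzo-deggeq5-symmetrisable}.

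The hard part is exactly this residual $\mu_3$-torsor $P$ — equivalently, the possibly-minimal intermediate del Pezzo surfaces that carry a rational point but admit no Galois-stable blow-down to $\mathbb{P}^2$. The torsor is geometrically non-trivial (its class in $H^1(\mathbb{P}^2\setminus B,\mu_3)$ has non-zero Picard component), so $[P]$ is not simply $3\,[\mathbb{P}^2\setminus B]$, and one must genuinely exhibit a decomposition of $[P]$ into $\K_0$-étale linear classes and classes of genus-$1$ curves. A further complication is that descent along even-degree extensions is unavailable — only the odd-degree case of Theorem~\ref{thm:basechangeodd} — so every reduction must be arranged either over an odd-degree extension or within the $\K_0(\EtLin_k)$-module structure, in particular to absorb the primitive cube root of unity occurring in the splitting fields of the line configurations.
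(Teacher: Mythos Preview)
Your proposal outlines two strategies but completes neither; you explicitly flag the ``hard part'' (the residual $\mu_3$-torsor $P$, or equivalently the minimal intermediate del Pezzo surfaces) and do not resolve it. So as written this is not a proof.

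The missing idea is much simpler than either route you sketch. The Fermat cubic
\[
Y:\ x^3+y^3+z^3=t^3\ \subset\ \mathbb{P}^3
\]
contains two \emph{skew lines defined over $k$ itself}, namely $L_1=\{x=t,\ y=-z\}$ and $L_2=\{x=-t,\ y=z\}$ --- no cube roots of unity are needed. Blowing these down gives a del Pezzo surface of degree $5$ with a $k$-point, which is symmetrisable by Theorem~\ref{thm:del-pezzo-deggeq5-symmetrisable}; hence $Y$ is symmetrisable by Corollary~\ref{cor:blowupsymm}. Now an arbitrary diagonal cubic $a_0x_0^3+a_1x_1^3+a_2x_2^3+a_3x_3^3=0$ becomes isomorphic to $Y$ over $L=k(\sqrt[3]{a_1/a_0},\sqrt[3]{a_2/a_0},\sqrt[3]{a_3/a_0})$, an extension of degree a power of $3$, and Corollary~\ref{cor:oddextensions} finishes.

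This bypasses entirely the $\mu_3$-torsor decomposition and the question of Galois-stable blow-downs on a general twist. Your conic-bundle approach (find a line over an odd extension, then try to reach degree $\geq 5$) might be made to work, but the step ``blow down successively'' is exactly where minimality obstructions live and you have not addressed them; the point is that for the \emph{specific} Fermat model the required $k$-rational skew lines are already visible, so one never has to confront that issue.
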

\begin{proof}
    We first claim that the diagonal cubic surface $Y$ defined by the equation
$$
Y: x^3 + y^3 + z^3 = t^3 \subseteq \mathbb{P}^3_{[x:y:z:t]}
$$
is $\K_0$-étale linear. Note that $Y$ contains $2$ skew lines defined over $k$: namely, the lines $L_1: \{ x=t, y=-z\}$ and $L_2: \{x=-t, y=z\}$.
Therefore we may blow $Y$ down at these $2$ lines to obtain a del Pezzo surface of degree $5$. This del Pezzo surface will have a $k$-point, since the skew lines are defined over $k$, so is symmetrisable by Theorem $\ref{thm:del-pezzo-deggeq5-symmetrisable}$, and therefore $Y$ is symmetrisable by Corollary $\ref{cor:blowupsymm}$. 

For the general case, a diagonal cubic surface $X$ is defined by an equation
$$
X: a_1x^3 + a_2y^3 + a_3z^3 = t^3 \subseteq \mathbb{P}^3_{[x:y:z:t]}, 
$$
for some $a_1, a_2, a_3 \in k^\times$. Therefore, $X$ and $Y$ become isomorphic once we base change to the field $L := k(\sqrt[3]{a_1}, \sqrt[3]{a_2}, \sqrt[3]{a_3})$. Note that $[L:k] = 3^i$ where $i \in \{0,1,2,3\}$. In particular, we can apply Theorem $\ref{thm:basechangeodd}$ to obtain the result. 
\end{proof}

To demonstrate the computational utility of Theorem $\ref{theorem:euler-char-symmetric-power-linear-variety}$, we give an explicit computation of the compactly supported $\A^1$-Euler characteristic of the third symmetric power of a class of cubic surfaces. Let $\alpha, \beta, \gamma \in k^\times$ be non squares. Let $Y$ be a closed embedding of $\Spec(k(\sqrt{\alpha})) \amalg \Spec(k(\sqrt{\beta})) \amalg \Spec(k(\sqrt{\gamma}))$ into $\mathbb{P}^2$ such that the six points of $Y_{\kbar}(\kbar)$ lie in general position in $\mathbb{P}_{\kbar}^2(\kbar)$. Let $X := \mathrm{Bl}_Y(\mathbb{P}^2)$, so $X/k$ is a smooth cubic surface which is symmetrisable by Corollary $\ref{cor:blowupsymm}$.

\begin{propn}
    We see that
    $$\chimotc(X) = 2 \langle 1 \rangle + 4 \langle -1 \rangle + \langle -\alpha \rangle + \langle - \beta \rangle + \langle -\gamma \rangle.$$
\end{propn}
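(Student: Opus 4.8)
The plan is to reduce the statement to a computation of $[X] \in \grvark$ and then push forward along the ring homomorphism $\chimotc : \grvark \to \GW(k)$. First I would express $[X] = [\mathrm{Bl}_Y(\mathbb{P}^2)]$ explicitly in terms of classes of affine spaces and of $Y$ using Bittner's theorem, and then apply $\chimotc$, using that it is additive and multiplicative together with the known values of $\chimotc$ on $\mathbb{P}^2$, on $\mathbb{A}^1$, and on the quadratic étale algebras $k(\sqrt{\delta})$.

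For the $\grvark$-computation, note that $Y \hookrightarrow \mathbb{P}^2$ is a regular closed immersion of a smooth $0$-dimensional subscheme, so the exceptional divisor $E \subset X$ is the projectivization of the rank-$2$ conormal bundle $\mathcal{N}_{Y/\mathbb{P}^2}$, hence a Zariski-locally trivial $\mathbb{P}^1$-bundle over $Y$; as in Lemma \ref{lemma:linear-bundles-are-linear} this gives $[E] = [\mathbb{P}^1][Y]$ in $\grvark$. Applying Bittner's theorem \cite[Theorem 3.1]{bittner} exactly as in the proof of Lemma \ref{lemma:linear-blowups-are-linear} yields $[X] - [E] = [\mathbb{P}^2] - [Y]$, and therefore, using $[\mathbb{P}^1] = [\mathbb{A}^1] + 1$,
\[
  [X] \;=\; [\mathbb{P}^2] \;+\; [\mathbb{A}^1]\,[Y] \qquad \text{in } \grvark .
\]
Since $Y \cong \Spec(k(\sqrt{\alpha})) \amalg \Spec(k(\sqrt{\beta})) \amalg \Spec(k(\sqrt{\gamma}))$, we have $[Y] = [\Spec(k(\sqrt{\alpha}))] + [\Spec(k(\sqrt{\beta}))] + [\Spec(k(\sqrt{\gamma}))]$, so everything is now expressed in terms of classes on which $\chimotc$ is known.

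For the second step, $\chimotc$ is a ring homomorphism, so $\chimotc(X) = \chimotc(\mathbb{P}^2) + \chimotc(\mathbb{A}^1)\,\chimotc(Y)$. Here $\chimotc(\mathbb{P}^2) = \langle 1\rangle + \langle -1\rangle + \langle 1\rangle = 2\langle 1\rangle + \langle -1\rangle$ from the cell decomposition of $\mathbb{P}^2$ together with $\chimotc(\mathbb{A}^m) = \langle (-1)^m\rangle$; $\chimotc(\mathbb{A}^1) = \langle -1\rangle$; and $\chimotc(\Spec(k(\sqrt{\delta}))) = \langle 1\rangle + \langle \delta\rangle$ for each $\delta \in \{\alpha,\beta,\gamma\}$ (cf.\ \cite[Proposition 5.2]{hoyois14-lefschetz}), so $\chimotc(Y) = 3\langle 1\rangle + \langle\alpha\rangle + \langle\beta\rangle + \langle\gamma\rangle$. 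Multiplying out in $\GW(k)$ using $\langle a\rangle\langle b\rangle = \langle ab\rangle$ and $\langle -1\rangle^2 = \langle 1\rangle$ gives $\chimotc(\mathbb{A}^1)\,\chimotc(Y) = 3\langle -1\rangle + \langle -\alpha\rangle + \langle -\beta\rangle + \langle -\gamma\rangle$, and adding $\chimotc(\mathbb{P}^2)$ yields $\chimotc(X) = 2\langle 1\rangle + 4\langle -1\rangle + \langle -\alpha\rangle + \langle -\beta\rangle + \langle -\gamma\rangle$. There is no genuine obstacle beyond keeping track of the $\GW(k)$-arithmetic and correctly recalling the value of $\chimotc$ on a quadratic étale algebra; the general position hypothesis on $Y_{\kbar}$ is needed only so that $X$ really is a smooth cubic surface, and plays no role in the $\grvark$-computation. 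As a sanity check, the rank of the answer is $2 + 4 + 3 = 9$, the topological Euler characteristic of a cubic surface.
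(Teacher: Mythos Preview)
Your proof is correct and follows essentially the same route as the paper: both use the blow-up relation $[X] = [\mathbb{P}^2] + [E] - [Y]$ with $[E] = [\mathbb{P}^1][Y]$, then evaluate $\chimotc$ on each piece using $\chimotc(\mathbb{P}^2) = 2\langle 1\rangle + \langle -1\rangle$ and the trace-form value $\chimotc(\Spec(k(\sqrt{\delta}))) = \langle 1\rangle + \langle \delta\rangle$ from \cite[Proposition 5.2]{hoyois14-lefschetz}. The only cosmetic difference is that you first simplify to $[X] = [\mathbb{P}^2] + [\mathbb{A}^1][Y]$ in $\grvark$ and then apply $\chimotc$, whereas the paper works directly in $\GW(k)$ via $\chimotc(E) - \chimotc(Y) = \langle -1\rangle\,\chimotc(Y)$.
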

\begin{proof}
    By the blow up formula for $\chimotc$, we see that
$$
\chimotc(X) = \chimotc(\mathbb{P}^2) + \chimotc(E) - \chimotc(Y),
$$
where $E$ is the exceptional divisor of the blow up. The exceptional divisor is a $\mathbb{P}^1$ bundle over $Y$, so $\chimotc(E) = \chimotc(\mathbb{P}^1) \cdot \chimotc(Y)$. Note that $\chimotc(\mathbb{P}^1) = \mathbb{H}$, so $\chimotc(E) - \chimotc(Y) = \langle -1 \rangle \cdot \chimotc(Y)$.  By \cite[Proposition 5.2]{hoyois14-lefschetz}, we see that if $L/k$ is a finite field extension, then $\chimotc(\Spec(L)) = [\mathrm{Tr}_{L/k}]$, where $[\mathrm{Tr}_{L/k}]$ is the trace form on $L$. If $L = k(\sqrt{\alpha})$, then computing the trace form in the basis $1, \sqrt{\alpha}$ gives $\chimotc(\Spec(L)) = \langle 2 \rangle + \langle 2 \alpha \rangle$. Additivity of $\chimotc$ implies
$$
\chimotc(Y) = 3 \langle 2 \rangle + \langle 2\alpha \rangle + \langle 2\beta \rangle + \langle 2\gamma \rangle = 2\langle 1 \rangle + \langle 2 \rangle + \langle 2\alpha \rangle + \langle 2\beta \rangle + \langle 2\gamma \rangle
$$
Finally, $\chimotc(\mathbb{P}^2) = 2\langle 1\rangle +\langle -1 \rangle$. Put together, this gives
$$
\chimotc(X) = 2 \mathbb{H} + \langle -1 \rangle + \langle -2 \rangle + \langle -2\alpha \rangle + \langle - 2\beta \rangle + \langle -2\gamma \rangle.
$$
\end{proof}
\begin{cor}
    For $X/k$ our cubic surface as above, the compactly supported $\A^1$- Euler characteristic of its third symmetric power is given by
    \begin{align*}
   \chimotc(X^{(3)}) &= 60\mathbb{H} + 11 \langle -1 \rangle + 3\langle -2 \rangle \\
&+  7\left( \langle -2\alpha \rangle + \langle -2\beta \rangle + \langle -2\gamma \rangle \right) + \left(\langle -\alpha \rangle + \langle -\beta \rangle + \langle -\gamma \rangle \right) \\& + \left( \langle 1 \rangle + \langle 2 \rangle \right)\left( \langle \alpha\beta \rangle + \langle \alpha\gamma \rangle + \langle \beta\gamma \rangle \right)\\& 
+  \langle -2\alpha\beta\gamma \rangle +  t_{\alpha \beta} + t_{\beta \gamma} + t_{\alpha \gamma}
    \end{align*}
\end{cor}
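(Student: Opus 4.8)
The plan is to pass to the power structure on $\GW(k)$ and then expand mechanically. The cubic surface $X = \mathrm{Bl}_Y(\mathbb{P}^2)$ of the preceding discussion is symmetrisable (by Corollary \ref{cor:blowupsymm}, since $Y$ is zero-dimensional hence symmetrisable and $\mathbb{P}^2$ is symmetrisable), so $\chimotc(X^{(3)}) = a_3(\chimotc(X))$; and by the preceding Proposition $\chimotc(X) = q_0 + q_1$ with $q_0 = 2\langle 1\rangle + 4\langle -1\rangle$ and $q_1 = \langle -\alpha\rangle + \langle -\beta\rangle + \langle -\gamma\rangle$. I would start from the additive identity of Definition \ref{def:power-structure}(3),
\[
    a_3(q_0 + q_1) = \sum_{i=0}^{3} a_i(q_0)\, a_{3-i}(q_1),
\]
so that the problem reduces to evaluating $a_i$ on $q_0$ and on $q_1$ for $i \le 3$ and then forming products in $\GW(k)$.

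For the $q_0$-factor I would split $2\langle 1\rangle$ off from $4\langle -1\rangle$ with one more use of additivity and then apply Lemma \ref{lemma:power-structure-on-basic-classes}; this expresses each $a_i(q_0)$ as an explicit sum of binomial coefficients times $\langle 1\rangle$ and $\langle -1\rangle$ with no torsion, which I would immediately rewrite as $m\mathbb{H} + n\langle -1\rangle$ using $\langle 1\rangle + \langle -1\rangle = \mathbb{H}$. For the $q_1$-factor I would expand $a_j(q_1)$ over the three summands by additivity, evaluate each $a_\ell(\langle -\alpha\rangle) = \langle (-\alpha)^\ell\rangle + \tfrac{\ell(\ell-1)}{2} t_{-\alpha}$ via Definition \ref{def:power-structure-gw}, and simplify using $\langle c^2 d\rangle = \langle d\rangle$ and $\langle a\rangle\langle b\rangle = \langle ab\rangle$ (so that products of $\langle -\alpha\rangle, \langle -\beta\rangle, \langle -\gamma\rangle$ collapse to $\langle \alpha\beta\rangle$, $\langle -\alpha\beta\gamma\rangle$, and so on), that $t_{\pm 1} = 0$ (Remark \ref{rem:talpha}) and that each $t_\bullet$ is $2$-torsion, together with the auxiliary identities $t_{-\alpha} = t_\alpha$ and, more generally, $\langle b\rangle\, t_\alpha = t_{\alpha b} - t_b$ — both of which drop straight out of the definition of $t_\alpha$ and the relations in $\GW(k)$, the first also using $q\mathbb{H} = \mathrm{rank}(q)\mathbb{H}$.

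Finally I would multiply out $\sum_i a_i(q_0)\, a_{3-i}(q_1)$ and collect terms: every pair $\langle c\rangle + \langle -c\rangle$ is absorbed into a copy of $\mathbb{H}$, and $q\mathbb{H} = \mathrm{rank}(q)\mathbb{H}$ then totals the hyperbolic part into the single multiple $60\mathbb{H}$; the $\langle -1\rangle$-, $\langle -\alpha\rangle$-, $\langle -\alpha\beta\rangle$- and $\langle -\alpha\beta\gamma\rangle$-contributions are read off directly; and the torsion terms are brought to normal form using $2t_\bullet = 0$ and $\langle b\rangle t_\alpha = t_{\alpha b} - t_b$. The only real obstacle is organisational bookkeeping: there are many cross-terms, and one must track carefully which of them carry torsion — only the $a_\ell(\langle -\alpha\rangle)$, $a_\ell(\langle -\beta\rangle)$, $a_\ell(\langle -\gamma\rangle)$ with $\ell \ge 2$ do, since $t_{\pm 1} = 0$ — and how those torsion contributions behave after multiplication by the rank-nine form $q_0$ and by $q_1$ (many double up and then vanish by $2$-torsion). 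To keep this manageable I would first compute the torsion-free part, which is a pure $\GW(k)$-of-basic-classes calculation entirely parallel to the proof of Theorem \ref{thm:euler-char-symmetric-power-grassmannian}, and only afterwards add in the torsion contributions separately; assembling the two pieces gives the stated formula.
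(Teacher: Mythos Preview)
Your proposal is correct and follows essentially the same route as the paper: both invoke symmetrisability to reduce to $a_3(\chimotc(X))$, split $\chimotc(X)$ into $q_0 = 2\langle 1\rangle + 4\langle -1\rangle$ and $q_1 = \langle -\alpha\rangle + \langle -\beta\rangle + \langle -\gamma\rangle$, expand $a_3(q_0+q_1)$ by additivity, and then assemble the four cross-terms. The paper simply cites \cite[Lemmas 3.18 and 3.19]{pajwani23-powerstructures} for the ``standard computations'' where you spell out the identities $t_{-\alpha}=t_\alpha$ and $\langle b\rangle t_\alpha = t_{\alpha b}-t_b$ explicitly, and it records the four intermediate values $a_3(q_0)$, $a_2(q_0)q_1$, $q_0\,a_2(q_1)$, $a_3(q_1)$ separately before summing, which is a cleaner way to organise the bookkeeping you rightly flag as the main hazard.
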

\begin{proof}
    Since $X$ is symmetrisable, we see that $\chimotc(X^{(3)}) = a_3(\chimotc(X))$.
    We may compute $a_3(\chimotc(X))$, using the additive formula for the $a_n$s, to obtain
\begin{align*}
\chimotc(X^{(3)}) &= a_3( 2 \langle 1 \rangle + 3 \langle -1 \rangle + \langle -2 \rangle) \\
&+ a_2( 2 \langle 1 \rangle + 3 \langle -1 \rangle + \langle -2 \rangle) \cdot \left( \langle -2\alpha \rangle + \langle -2\beta \rangle + \langle -2\gamma \rangle \right) \\
&+ ( 2 \langle 1 \rangle + 3 \langle -1 \rangle + \langle -2 \rangle) \cdot a_2(\langle -2\alpha \rangle + \langle -2\beta \rangle + \langle -2\gamma \rangle) \\
&+ a_3(\langle -2\alpha \rangle + \langle -2\beta \rangle + \langle -2\gamma \rangle).
\end{align*}
Write $\phi = \langle -2\alpha \rangle + \langle -2\beta \rangle + \langle -2\gamma \rangle$ to ease notation.
Standard computations utilising \cite[Lemmas 3.20 and 3.21]{pajwani23-powerstructures} give us
\begin{align*}
    a_3(  2\mathbb{H} +  \langle -1 \rangle + \langle -2 \rangle) &= 24\mathbb{H} + 8 \langle -1 \rangle  \\
    a_2(  2\mathbb{H} +  \langle -1 \rangle + \langle -2 \rangle)\phi &= 24 \mathbb{H}  + +4 \phi + \langle 2\rangle \phi  \\
    ( 2 \langle 1 \rangle + 3 \langle -1 \rangle + \langle -2 \rangle) \cdot a_2(\phi) &=
    12\mathbb{H} + (\langle -1 \rangle + \langle -2 \rangle) \left( 3\langle 1 \rangle +   \langle -\alpha \beta \rangle + \langle  
 - \beta \gamma \rangle + \langle - \alpha\gamma \rangle\right) \\
    a_3(\phi) &= 3\phi + \langle - 2\alpha \beta \gamma \rangle + t_{\alpha \beta} + t_{\beta\gamma} + t_{\alpha \gamma}.
\end{align*}
Compiling these terms and substituting our value back for $\phi$ gives
\begin{align*}
\chimotc(X^{(3)}) &= 60\mathbb{H} + 11 \langle -1 \rangle + 3\langle -2 \rangle \\
&+  7\left( \langle -2\alpha \rangle + \langle -2\beta \rangle + \langle -2\gamma \rangle \right) + \left(\langle -\alpha \rangle + \langle -\beta \rangle + \langle -\gamma \rangle \right) \\& + \left( \langle 1 \rangle + \langle 2 \rangle \right)\left( \langle \alpha\beta \rangle + \langle \alpha\gamma \rangle + \langle \beta\gamma \rangle \right)\\& 
+  \langle -2\alpha\beta\gamma \rangle +  t_{\alpha \beta} + t_{\beta \gamma} + t_{\alpha \gamma}.
\end{align*}
\end{proof}

\begin{rem}
    It should be noted that Pepin-Lehalleur and Taelman have work in progress which computes $X^{(n)}$ for a very general class of surfaces $X$. Moreover, Section 8 of \cite{bejleri2025symmetricpowersnullmotivic} to derives a G{\"o}ttsche style formula for computing the compactly supported $\A^1$-Euler characteristic of Hilbert schemes of points of symmetrisable surfaces. This result is of particular interest if we can show that $\K3$ surfaces are symmetrisable, when $X$ is a $\K3$-surface, this formula would generalise \cite[Corollary 8.18]{pajwani22-YZ}. In characteristic $0$ we see that $\K3$ surfaces are not in general geometrically stably rational (for example: the unramified Brauer group of a variety is a stable birational invariant, and we may find examples of $\K3$ surfaces whose unramified Brauer group is non-trivial). Therefore $\K3$ surfaces are not in general $\K_0$-étale linear by Theorem $\ref{thm:nonlinear}$, and so the results of this paper do not apply to give such a formula. However, we may apply Theorem 8.7 of \cite{bejleri2025symmetricpowersnullmotivic}, to obtain a G{\"o}ttsche formula for symmetrisable surface, and in particular, the results of our paper implies that the formula in Theorem 8.7 of \cite{bejleri2025symmetricpowersnullmotivic} holds whenever $X/k$ is a del Pezzo surface of degree $\geq 5$ with $X(k) \neq \emptyset$.
\end{rem}

\section{Curves of genus \texorpdfstring{$\leq 1$}{≤1}} 
    \label{section: curves}

In this section we show that curves of geometric genus $\leq 1$ are symmetrisable, but curves of geometric genus $>0$ are not $\K_0$-étale linear in characteristic $0$. Therefore, curves of genus $1$ give examples to show the inclusion $\K_0(\EtLin_k) \subseteq \K_0(\Sym_k)$ is strict.

\begin{rem}
    The following proposition was initially proven only in characteristic $0$ and before the paper \cite{broering24-curves}, which shows that all curves are symmetrisable, both in characteristic $0$ and in characteristic $>2$. We keep the below argument in this paper for completeness, however, we use an argument from \cite{broering24-curves} to extend our original argument to characteristic $p$.
\end{rem}
\begin{propn}\label{propn:odd-powers}
    Let $C$ be a smooth projective curve, and let $n$ be odd. Then $\chimotc(C^{(n)}) = a_n(\chimotc(C))$.
\end{propn}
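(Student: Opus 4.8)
The plan is to reduce the identity to a statement about ranks, exploiting the fact that both sides are hyperbolic forms once $n$ is odd. We may assume $C$ is connected, since $\K_0(\Sym_k)$ is a subgroup of $\grvark$ and the power structures $S_n$, $a_n$ are additive. The symmetric power $C^{(n)}$ is then a smooth, projective, connected variety of dimension $n$ (symmetric powers of smooth curves are smooth), so $\chimotc(C^{(n)}) = \chi^{Hdg}(C^{(n)})$; since $n$ is odd, the defining recipe for $\chi^{Hdg}$ on odd-dimensional smooth projective varieties outputs a multiple of $\HH$, so $\chimotc(C^{(n)})$ is hyperbolic. Likewise $C$ has odd dimension $1$, so $\chimotc(C) = (1-g)\HH$ where $g$ is the genus of $C$; by Lemma \ref{oddhyperbolic}, $a_n((1-g)\HH)$ is hyperbolic because $n$ is odd. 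As a hyperbolic form $N\HH \in \GW(k)$ is determined by its rank $2N$, it suffices to show the two sides have equal rank.

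For the rank computation I would use two ingredients. First, $\mathrm{rank} \colon \GW(k) \to \Z$ respects power structures, where $\Z$ carries the standard power structure $a^{\Z}_n(r) = \binom{r+n-1}{n}$: on generators $\mathrm{rank}(a_n(\langle\alpha\rangle)) = \mathrm{rank}(\langle\alpha^n\rangle + \tfrac{n(n-1)}{2}t_\alpha) = 1 = a^{\Z}_n(1)$ since $t_\alpha$ has rank $0$, and the general case follows from the additive formulas of Definition \ref{def:power-structure} (it can also be read off from Lemma \ref{lemma:power-structure-on-basic-classes}). Hence $\mathrm{rank}(a_n(\chimotc(C))) = a^{\Z}_n(\mathrm{rank}(\chimotc(C))) = a^{\Z}_n(2-2g)$. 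Second, $\mathrm{rank}\circ\chimotc \colon \grvark \to \Z$ is the compactly supported Euler characteristic measure $e_c$: indeed, for $X$ smooth projective the rank of $\chimotc(X)$ is the topological Euler characteristic $\sum_{i,j}(-1)^{i+j}\dim_k H^i(X,\Omega^j_{X/k})$ (by Hodge symmetry and Serre duality in characteristic $0$), which agrees with $e_c([X])$ on generators of $\grvark$, cf. Remark \ref{rem:vcd2}. Therefore $\mathrm{rank}(\chimotc(C^{(n)})) = e_c([C^{(n)}]) = e_c(S_n([C]))$.

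It remains to show $e_c(S_n([C])) = a^{\Z}_n(e_c([C]))$, i.e.\ that $e_c$ respects the two power structures; since $e_c([C]) = \mathrm{rank}(\chimotc(C)) = 2-2g$, this matches the previous step and finishes the proof. Over $\C$ this is exactly Macdonald's theorem \cite{macdonald62-classic}: $C^{(n)}(\C) = C(\C)^{(n)}$ and $\sum_{n\ge 0} e_c(C(\C)^{(n)})t^n = (1-t)^{-e_c(C(\C))} = \sum_{n\ge 0} a^{\Z}_n(e_c(C(\C)))t^n$; the general characteristic-$0$ case follows since $e_c$ and symmetric powers are both compatible with base change (cf.\ Remark \ref{rem:vcd2}). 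Combining the three steps, $\chimotc(C^{(n)})$ and $a_n(\chimotc(C))$ are hyperbolic forms of equal rank, hence equal in $\GW(k)$. The one step that needs genuine care is the middle one — verifying that $\mathrm{rank}$ is a morphism of power structures and that $\mathrm{rank}\circ\chimotc$ really is $e_c$, which is where characteristic $0$ (Hodge symmetry / comparison with singular cohomology) enters; once that is in place, the oddness of $n$ forces the rest through Lemma \ref{oddhyperbolic}.
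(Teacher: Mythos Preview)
Your approach is essentially the same as the paper's: both sides are hyperbolic (via odd dimension and Lemma~\ref{oddhyperbolic}), so it suffices to match ranks, which is done by reducing to Macdonald's theorem over $\mathbb{C}$ using base-change invariance of rank. One small caveat: your opening reduction to connected $C$ is not quite valid as stated (knowing the identity only for odd $n$ on each component does not formally give it for odd $n$ on the disjoint union, since the additive formula mixes even and odd indices), but this step is unnecessary anyway---$C^{(n)}$ is smooth projective of odd dimension and $\chimotc(C)$ is hyperbolic regardless of connectedness, so the rest of your argument goes through without it.
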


\begin{proof}
    Note that $C^{(n)}$ is smooth projective of odd dimension, so $\chimotc(C^{(n)})$ is hyperbolic by \cite[Corollary 3.2]{levine20-aspects}.
    Since the rank is invariant under base change of fields, we may assume $k$ is algebraically closed by base changing to the algebraic closure of $k$. 
    
    If $\mathrm{char}(k)=0$ we may argue as in the proof of \cite[Theorem 2.14]{pajwani22-YZ} to reduce the result to the case where $k=\mathbb{C}$, where the result holds by a result of MacDonald \cite[(4.4)]{macdonald62-symmetric}.  In positive characteristic, we may argue as in \cite[Lemma 9, Proposition 10]{broering24-curves} to reduce to the case where $k$ has characteristic $0$.
    
\end{proof}

\begin{cor}\label{cor:genus-zero-curves}
    Smooth projective curves of genus $0$ are symmetrisable.
\end{cor}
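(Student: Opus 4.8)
The plan is to reduce the claim for genus $0$ curves to the case of $\P^1$ itself, and then invoke the structure theory already developed. First I would recall that a smooth projective curve $C$ of genus $0$ over $k$ is exactly a Severi--Brauer variety of dimension $1$, i.e.\ a conic, and so it is split by a separable field extension of degree dividing $2$. The subtlety is that the extension could be of even degree, so Corollary~\ref{cor:oddextensions} does not apply directly; instead I would separate into two cases according to whether $C(k)=\emptyset$ or not.

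If $C(k)\neq\emptyset$, then $C\cong\P^1_k$, which is $\K_0$-étale linear by Example~\ref{examples:k0etlin}(1), hence symmetrisable by Theorem~\ref{theorem:euler-char-symmetric-power-linear-variety}, and we are done. If $C(k)=\emptyset$, then $C$ has no rational point, but it acquires one over some quadratic extension $L/k$, where $C_L\cong\P^1_L$. Here I would use Proposition~\ref{propn:odd-powers} to dispose of all odd symmetric powers at once: since $C$ is a smooth projective curve, $\chimotc(C^{(n)})=a_n(\chimotc(C))$ for every odd $n$, without any hypothesis on $C(k)$. So it only remains to treat the even symmetric powers $C^{(2m)}$.

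For the even case, the key point is that a genus $0$ curve without a rational point still has $\chimotc(C)=\HH$: indeed $C$ is smooth projective of even dimension $0$\dots no, dimension $1$, so by \cite[Corollary 3.2]{levine20-aspects} (or directly, since the Hodge numbers of a conic match those of $\P^1$ after base change and the rank/discriminant determine a hyperbolic class here) one gets $\chimotc(C)=\HH$. Wait — dimension $1$ is odd, so $\chimotc(C)$ is hyperbolic of rank $2$, hence $\chimotc(C)=\HH$. Then I would observe that $C^{(2)}$ for a conic is well understood: $C^{(2)}\cong\P^2_k$ in fact (the symmetric square of a conic is a $\P^2$, classically — the linear system of degree $2$ divisors), so $\chimotc(C^{(2)})=2\langle1\rangle+\langle-1\rangle$, and one checks $a_2(\HH)=a_2(\langle1\rangle+\langle-1\rangle)=a_2(\langle1\rangle)+a_1(\langle1\rangle)a_1(\langle-1\rangle)+a_2(\langle-1\rangle)=\langle1\rangle+\langle-1\rangle+\langle1\rangle=2\langle1\rangle+\langle-1\rangle$ using $t_{\pm1}=0$. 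More generally I would show $[C^{(n)}]\in\K_0(\EtLin_k)$ for all $n$: since $[C]=[\P^1]=[\A^1]+[\Spec k]$ would require $C(k)\neq\emptyset$, this is false in general, but one can instead argue via Corollary~\ref{cor:oddextensions} applied to $C^{(n)}$ versus $(\P^1)^{(n)}=\P^n$ — these become isomorphic over $L$, but $[L:k]=2$ is even, which is again the obstruction. The clean route is: combine Proposition~\ref{propn:odd-powers} (odd $n$) with a direct verification that $C^{(2m)}$ is symmetrisable, using that $\chimotc(C)=\HH$ together with Theorem~\ref{thm:field-extension-respects-power-structure} and the fact that $\chimotc$ of the even symmetric powers of a conic can be computed from those of $\P^1$ since the difference $[C^{(n)}]-[(\P^1)^{(n)}]=[C^{(n)}]-[\P^n]$ is killed by $\chimotc$ (both being smooth projective, and $a_n(\chimotc(C))=a_n(\HH)=a_n(\chimotc(\P^1))=\chimotc(\P^n)$ by $t_{\pm1}=0$ and Theorem~\ref{theorem:euler-char-symmetric-power-linear-variety}), so it suffices to see $\chimotc(C^{(n)})=\chimotc(\P^n)$, which follows from $C^{(n)}\cong\P^n_L$ after base change and the rank being a base-change invariant while the class is hyperbolic for odd $n$ and\dots

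I expect the main obstacle to be the even symmetric powers $C^{(2m)}$ when $C(k)=\emptyset$: Proposition~\ref{propn:odd-powers} only handles odd $n$, and the even extension $L/k$ blocks a naive application of the odd-Galois-twist machinery. The cleanest fix is probably to invoke $\chimotc(C)=\HH$ (since $\dim C=1$ is odd, $\chimotc(C)$ is hyperbolic of rank $2$) and then note $a_n(\chimotc(C))=a_n(\HH)=a_n(\chimotc(\P^1))$ because $t_{\pm1}=0$, so that by Theorem~\ref{theorem:euler-char-symmetric-power-linear-variety} the target value is $\chimotc(\P^n)$; it then remains only to show $\chimotc(C^{(n)})=\chimotc(\P^n)$ for even $n$, which one gets from $C^{(n)}_{L}\cong\P^n_L$ together with the fact that both sides are smooth projective so their difference lies in $\ker(\GW(k)\to\GW(L))$, and this kernel is $2$-torsion annihilated by the trace form argument as in Theorem~\ref{thm:basechangeodd} — but since $[L:k]=2$ is even that argument fails, so instead one computes $\chimotc(C^{(n)})$ directly via a stratification of the conic ($C=\A^1_?\sqcup\dots$ after removing a closed point of degree $2$, giving $[C]=[\Spec L']+[\Gm]$-type decompositions) showing $[C^{(n)}]\in\K_0(\EtLin_k)$ after all. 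I would flag this last point as the one requiring care and would present the argument that $C$ itself, despite possibly lacking a rational point, has $[C]\in\K_0(\EtLin_k)$: removing a closed point $P$ of degree $2$ gives $C\setminus P\cong\A^1_{k(P)}$ up to a twist — in fact $C\setminus\{P\}$ is a torsor under $\Gm$ or is $\A^1$ over $k(P)$, so $[C]=[\A^1_{k(P)}]-[\text{something étale}]+[\Spec k(P)]\in\K_0(\EtLin_k)$, whence symmetrisability follows from Theorem~\ref{theorem:euler-char-symmetric-power-linear-variety} directly and uniformly in $n$.
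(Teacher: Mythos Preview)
Your reduction is correct and matches the paper: Proposition~\ref{propn:odd-powers} handles all odd $n$, and if $C(k)\neq\emptyset$ then $C\cong\P^1$ is $\K_0$-étale linear. For the remaining case --- $n=2m$ even and $C(k)=\emptyset$ --- you never actually close the argument. You correctly observe that $C^{(2)}\cong\P^2$ via the complete linear system of degree~$2$ divisors, and this is precisely the idea that finishes the proof in general: since $\mathrm{Pic}(C)$ is generated by the degree-$2$ class $\mathcal{O}_C(1)$ (there being no degree-$1$ line bundle when $C(k)=\emptyset$), every effective divisor of degree $2m$ lies in $|\mathcal{O}_C(m)|\cong\P(H^0(C,\mathcal{O}_C(m)))\cong\P^{2m}$, whence $C^{(2m)}\cong\P^{2m}$ and $\chimotc(C^{(2m)})=\chimotc(\P^{2m})=a_{2m}(\HH)$. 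The paper reaches the same conclusion by a slightly different route: $C^{(n)}$ is a $k$-form of $(\P^1)^{(n)}=\P^n$, hence a Severi--Brauer variety; for $n$ even the closed subscheme $\Spec(L)^{(n)}\subset C^{(n)}$ (with $L/k$ quadratic splitting $C$) contains a copy of $\Spec(k)$, so $C^{(n)}(k)\neq\emptyset$ and Ch\^atelet's theorem gives $C^{(n)}\cong\P^n$. You had this in hand at $n=2$ but abandoned it instead of generalising.

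Your fallback route --- showing $[C]\in\K_0(\EtLin_k)$ by removing a degree-$2$ closed point $P$ --- is where the genuine gap lies. The complement $C\setminus P$ is a $k$-form of $\Gm$, but it is neither $\A^1_{k(P)}$ (wrong geometric type: $(\A^1_{k(P)})_{\kbar}$ is two copies of $\A^1_{\kbar}$, not $\Gm$) nor a $\Gm$-torsor (those are all trivial by Hilbert~90). Concretely, for $C:x^2+y^2+z^2=0$ over $\Q$ and $P$ the point at $z=0$, the complement is the affine curve $x^2+y^2=-1$, a nontrivial torsor under the norm-one torus $R^1_{\Q(i)/\Q}\Gm$ with no rational point; you give no argument that such torsors lie in $\K_0(\EtLin_k)$, and the paper neither needs nor addresses this. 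Drop this thread and extend your linear-system observation from $n=2$ to all even $n$.
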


\begin{proof}
    Let $C$ be a smooth projective curve of genus $0$. By the above proposition, we only need to show  $\chimotc(C^{(n)}) = a_n(\chimotc(C))$ when $n$ is even. If $C=\mathbb{P}^1$, then $C$ is linear, so the result follows by Theorem $\ref{theorem:euler-char-symmetric-power-linear-variety}$. Assume therefore that $C$ is a conic with $C(k) = \emptyset$, so there exists a quadratic extension $L/k$ with $C(L) \neq \emptyset$. In particular, we see there is a closed embedding $\Spec(L) \hookrightarrow C$. We then see that $\Spec(L)^{(n)}$ is a closed subvariety of $C^{(n)}$. Moreover, since $C^{(n)}$ is a $k$-form of $\mathbb{P}^1$, we see that $C^{(n)}$ will be given by a $k$-form of $(\mathbb{P}^1)^{(n)} = \mathbb{P}^n$, so is also a Severi--Brauer variety. Note that $\Spec(L)^{(n)}$ is given by the disjoint union of $\Spec(k)$ and $\frac{n}{2}$ copies of $\Spec(L)$, so in particular, $\Spec(k)$ embeds as a closed subvariety into $C^{(n)}$, so $C^{(n)}(k) \neq \emptyset$. Since $C^{(n)}$ is a Severi--Brauer variety however, we see $C^{(n)} = \mathbb{P}^n$ again by Ch\^atelet's Theorem (\cite[Proposition 4.5.10]{Poonen17}). The result is now clear.
\end{proof}

\begin{propn}\label{propn:abel-jacobi}
    Let $C$ be a smooth projective curve of genus $g>0$. Suppose that $n > 2g-2$. Then $\chimotc(C^{(n)}) = 0$.
\end{propn}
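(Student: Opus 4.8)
The plan is to realise $C^{(n)}$ as a projective bundle over a torsor under the Jacobian and then invoke the motivic Gauss--Bonnet theorem of Levine--Raksit \cite{levine20-gaussbonnet}. The geometric input is classical: since $n > 2g-2$, Riemann--Roch gives $h^0(C,L) = n-g+1$ and $h^1(C,L)=0$ for every line bundle $L$ of degree $n$ on $C$, so the Abel--Jacobi morphism $u\colon C^{(n)} \to \mathrm{Pic}^n(C)$ sending an effective divisor to its class has all fibres isomorphic to $\mathbb{P}^{n-g}$; in particular $u$ is smooth and proper of relative dimension $n-g$. The base $\mathrm{Pic}^n(C)$ is a torsor under the Jacobian $J := \mathrm{Pic}^0(C)$, which is an abelian variety of dimension $g \geq 1$; since the $J$-action is simply transitive, differentiating it produces a canonical isomorphism $T_{\mathrm{Pic}^n(C)} \cong \mathcal{O}_{\mathrm{Pic}^n(C)} \otimes_k \mathrm{Lie}(J)$, so $T_{\mathrm{Pic}^n(C)}$ is free of rank $g$.

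From here I would argue as follows. Because $\mathrm{Pic}^n(C)$ is smooth projective with trivial tangent bundle of positive rank, its Euler class vanishes (a trivial bundle of positive rank has a nowhere-vanishing section), so $\chimotc(\mathrm{Pic}^n(C)) = 0$ by motivic Gauss--Bonnet. To pass to $C^{(n)}$, consider the relative tangent sequence of the smooth morphism $u$,
\[
0 \longrightarrow T_u \longrightarrow T_{C^{(n)}} \xrightarrow{\ du\ } u^{*}T_{\mathrm{Pic}^n(C)} \longrightarrow 0,
\]
in which $u^{*}T_{\mathrm{Pic}^n(C)} \cong \mathcal{O}_{C^{(n)}}^{\oplus g}$ is again trivial of positive rank. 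By the Whitney sum formula for Euler classes,
\[
e\bigl(T_{C^{(n)}}\bigr) = e(T_u)\cdot u^{*}e\bigl(T_{\mathrm{Pic}^n(C)}\bigr) = e(T_u)\cdot u^{*}e\bigl(\mathcal{O}^{\oplus g}\bigr) = 0,
\]
and since $C^{(n)}$ is smooth projective of dimension $n$, motivic Gauss--Bonnet yields $\chimotc(C^{(n)}) = \deg_{C^{(n)}/k} e(T_{C^{(n)}}) = 0$. Alternatively, when $C(k)\neq\emptyset$ a rigidified Poincar\'e bundle exists, $u$ is a Zariski-locally trivial $\mathbb{P}^{n-g}$-bundle, and hence $[C^{(n)}] = [\mathbb{P}^{n-g}]\cdot[\mathrm{Pic}^n(C)]$ in $\grvark$, so $\chimotc(C^{(n)}) = \chimotc(\mathbb{P}^{n-g})\cdot\chimotc(\mathrm{Pic}^n(C)) = 0$ with no reference to Euler classes at all.

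The essential content is just that $T_{C^{(n)}}$ admits a trivial quotient bundle of positive rank pulled back from the abelian variety downstairs; the step requiring the most care is turning this into the vanishing of $e(T_{C^{(n)}})$, which either uses the Whitney sum formula for Chow--Witt Euler classes (where one must track the determinant twists attached to the Euler classes), or a relative form of Gauss--Bonnet over $\mathrm{Pic}^n(C)$, or — when a rational point is available — the scissor-relation computation above. The remaining points (the Riemann--Roch bounds on $h^0$ and $h^1$, smoothness of $u$, and triviality of the tangent bundle of a torsor under an abelian variety) are routine, so I expect the whole argument to be short.
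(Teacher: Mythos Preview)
Your proposal is correct and shares the same geometric setup as the paper---the Abel--Jacobi map $u\colon C^{(n)} \to \mathrm{Pic}^n(C)$ with $\mathrm{Pic}^n(C)$ a $J$-torsor of trivial tangent bundle---but the mechanism for extracting the vanishing differs. The paper asserts (citing \cite[Theorem~7.33]{mustata11-zetafunctions}) that $u$ is Zariski locally trivial with Severi--Brauer fibre $B^{n-g}$ even without assuming $C(k)\neq\emptyset$, so that $\chimotc(C^{(n)}) = \chimotc(B^{n-g})\cdot\chimotc(\mathrm{Pic}^n(C))$, and then quotes \cite[Theorem~5.48]{pajwani22-YZ} for $\chimotc(\mathrm{Pic}^n(C))=0$. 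Your alternative route is exactly this argument specialised to the case $C(k)\neq\emptyset$, where the fibre is honestly $\mathbb{P}^{n-g}$; your \emph{primary} route---pulling back the trivial tangent bundle through the relative tangent sequence and using Whitney sum for Chow--Witt Euler classes together with motivic Gauss--Bonnet---is genuinely different, since it sidesteps the local-triviality claim entirely and needs only smoothness of $u$. The paper's version is shorter and stays within $\grvark$ and previously cited results, while your Euler-class version is more self-contained (it recovers $\chimotc(\mathrm{Pic}^n(C))=0$ en route rather than citing it) at the price of the twist-tracking you already flag.
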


\begin{proof}
    Let $k^{p^{-\infty}}$ denote the perfect closure of $k$, which is the field given by adjoining all $p^n$ roots of elements in $k$ for every $n$ (see \cite[2.]{perfectclosure}). Since we are not in characteristic $2$, the map $\mathrm{GW}(k) \to \GW(k^{perf})$ is an isomorphism, so without loss of generality, by base changing to $k^{perf}$, we may assume our base field is perfect.

    As in the proof of \cite[Theorem 7.33]{mustata11-zetafunctions}, define a morphism
    $C^{(n)}\to \text{Pic}^n(C)$ which is defined on points over the algebraic closure $\bar{k}$ of $k$ as follows. A point of $C^{(n)}$ over $\bar{k}$ is a divisor $D$ on $C$ of degree $n$ and we send $D$ to $\mathcal{O}_C(D)$. If $n>2g-2$, this map makes $C^{(n)}$ into a Zariski locally trivial bundle of degree $n-g$ over $\text{Pic}^n(C)$ whose fibre $B^{n-g}$ is a Severi--Brauer variety of dimension $n-g$. We now find that 
    \begin{align*}
        \chimotc(C^{(n)}) &= \chimotc(B^{n-g})\chimotc(\text{Pic}^n(C)).
    \end{align*}
    Since $\chimotc(\text{Pic}^n(C)) = 0$ by \cite[Theorem 5.29]{pajwani22-YZ}, this gives the result. The result \cite[Theorem 5.29]{pajwani22-YZ} is stated in characteristic $0$, but an identical argument holds over perfect fields of characteristic $\neq 2$.
\end{proof}
\begin{cor}\label{nonsmoothcurves}
    Let $C$ be a curve of geometric genus $\leq 1$. Then $C$ is symmetrisable.
\end{cor}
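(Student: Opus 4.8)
The plan is to reduce the statement to smooth projective curves and then combine the results already proved for them with the observation that $\chimotc$ vanishes on a genus-one curve.

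\emph{Reduction to smooth projective curves.} Since $\K_0(\Sym_k)$ is an abelian subgroup of $\grvark$ containing $\K_0(\Et_k)$ (all $0$-dimensional varieties are symmetrisable by \cite[Corollary 4.30]{pajwani23-powerstructures}; cf.\ Corollary \ref{cor:sym-is-etale-submodule}), it is enough to work modulo classes of $0$-dimensional varieties. Writing a curve $C$ as $C^{\mathrm{sm}} \sqcup C^{\mathrm{sing}}$ with $C^{\mathrm{sing}}$ finite, decomposing $C^{\mathrm{sm}}$ into its connected components, and replacing each such component by its (unique, as $\mathrm{char}\,k = 0$) smooth projective model $\bar D$ up to a finite correction, we obtain $[C] \equiv \sum_j [\bar D_j] \pmod{\K_0(\Et_k)}$, where each $\bar D_j$ is a smooth projective connected curve of the same geometric genus as the corresponding irreducible component of $C$, hence of geometric genus $\le 1$. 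So it suffices to treat a smooth projective connected curve $C$ with geometric genus $\le 1$.

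\emph{The two cases.} If the geometric genus is $0$, this is Corollary \ref{cor:genus-zero-curves}. If it is $1$, then since $\dim C = 1$ is odd we have $\chimotc(C) = m\HH$, where $m = \dim_k H^0(C,\mathcal{O}_C) - \dim_k H^0(C,\Omega^1_{C/k})$ by the defining formula for $\chi^{Hdg}$; in geometric genus $1$ these two dimensions agree, so $m = 0$ and $\chimotc(C) = 0$. Hence $a_n(\chimotc(C)) = a_n(0) = 0$ for every $n \ge 1$ and $a_0(0) = 1$. On the other hand $2g - 2 = 0$ when $g = 1$, so Proposition \ref{propn:abel-jacobi} applies to all $n \ge 1$ and gives $\chimotc(C^{(n)}) = 0$, while for $n = 0$ both sides equal $1$. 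Thus $\chimotc(C^{(n)}) = a_n(\chimotc(C))$ for all $n$, i.e.\ $C$ is symmetrisable; note that Proposition \ref{propn:odd-powers} is not even needed, since all the relevant classes vanish.

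\emph{Main obstacle.} The one delicate point is the reduction step when $\bar D_j$ is connected but not geometrically connected, i.e.\ has a nontrivial field of constants $k'/k$. The vanishing $\chimotc(\bar D_j) = 0$ in the genus-one case survives (the two relevant $k$-dimensions still coincide, both being $[k':k]$), but the genus of $\bar D_j$ over $k$ becomes $[k':k]$, so Proposition \ref{propn:abel-jacobi} only yields $\chimotc(\bar D_j^{(n)}) = 0$ for $n > 2[k':k] - 2$. Bridging this requires either a descent argument along $k'/k$ in the spirit of Theorem \ref{thm:field-extension-respects-power-structure}, or the observation that the Abel--Jacobi argument of Proposition \ref{propn:abel-jacobi} still works after replacing $\mathrm{Pic}^n(\bar D_j)$ by the Weil restriction $\mathrm{Res}_{k'/k}\mathrm{Pic}^n(\bar D_j)$, which remains a positive-dimensional abelian-variety torsor and hence has vanishing $\chimotc$ by \cite[Theorem 5.48]{pajwani22-YZ}. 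I expect this handling of the field of constants to be where essentially all the work lies; the rest is formal given the cited results.
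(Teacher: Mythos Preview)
Your proof is correct and follows essentially the same route as the paper: reduce to the smooth projective case via normalisation/compactification (using that $\K_0(\Sym_k)$ contains $\K_0(\Et_k)$), then invoke Corollary~\ref{cor:genus-zero-curves} for genus $0$ and Proposition~\ref{propn:abel-jacobi} for genus $1$. The paper's version is terser---it simply asserts that Proposition~\ref{propn:abel-jacobi} gives $\chimotc(C^{(n)})=a_n(\chimotc(C))$ for $n>0$, leaving implicit both the computation $\chimotc(C)=0$ and the $n=0$ check that you make explicit.

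Regarding your ``main obstacle'': the paper does not address the field-of-constants issue at all; its reduction step tacitly treats $\tilde C$ as if Corollary~\ref{cor:genus-zero-curves} and Proposition~\ref{propn:abel-jacobi} apply directly, which presumes the components are geometrically connected (the standard reading of ``smooth projective curve of genus $g$''). So you are being more careful than the paper here, not less; your proposed fixes via Weil restriction or descent are reasonable, but the paper itself offers no guidance on this point.
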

\begin{proof}
    We first reduce to the case where $C$ is smooth and projective. Let $\overline{C}$ be a compactification of $C$, and let $\tilde{C}$ be a normalisation of $\overline{C}$, so that $\tilde{C}$ is smooth and projective. Then $\tilde{C}$ and $C$ are birational and dimension $1$, so the rational map $\tilde{C} \dashrightarrow C$ allows us to realise $[\tilde{C}] - [A]= [C] - [A']$, where $A, A'$ are dimension $0$ varieties. Since $\K_0(\mathrm{Sym}_k)$ is an sub-abelian group of $\K_0(\mathrm{Var}_k)$ and dimension $0$ varieties are symmetrisable by \cite[Corollary 4.30]{pajwani23-powerstructures}, $[C]$ is an element of $\K_0(\mathrm{Sym}_k)$ if and only if $[\tilde{C}]$ is. Therefore, replacing $C$ with $\tilde{C}$ if necessary, assume $C$ is smooth and projective.

    If $g(C)=0$, we appeal to Corollary $\ref{cor:genus-zero-curves}$. If $g(C) = 1$, Proposition $\ref{propn:abel-jacobi}$ guarantees that $\chimotc(C^{(n)}) = 0$ for all $n > 0$. Since $C^{(1)}=C$, the result for $n=1$ gives us 
    $$
    \chimotc(C^{(n)})=0=a_n(0)=a_n(\chimotc(C)),
    $$
    so $[C] \in \K_0(\mathrm{Sym}_k)$ as required.
\end{proof}

\begin{lemma} \label{cor:positive-genus-not-etale-linear}
    Let $X$ be a geometrically connected curve of geometric genus $> 0$, and suppose $\mathrm{char}(k)=0$. Then $X$ is not $\K_0$-étale linear.
\end{lemma}
\begin{proof}
    As above, we may reduce to the case where $k$ is algebraically closed and $X$ is connected, and as in Corollary $\ref{nonsmoothcurves}$, we may reduce to the case where $X$ is smooth and projective. Suppose $X$ is stably rational, so it is unirational. Since $X$ is a curve, this would imply $X$ is rational by a result of Luroth (\cite{Luroth}). By the Riemann--Hurwitz formula $g(X)=0$. For $X$ not genus $0$, it is not geometrically stably rational, so we can apply Theorem $\ref{thm:nonlinear}$ to get the result.
\end{proof}

As mentioned in Remark $\ref{rem:symmnotetlin}$, this shows that the inclusion $\K_0(\EtLin_k)\subseteq \K_0(\mathrm{Sym}_k)$ is always strict in characteristic 0. In \cite[Proposition 26]{broering24-curves}, it is shown that all curves are symmetrisable. Since $\K_0(\mathrm{Sym}_k)$ is a module over $\K_0(\EtLin_k)$, we use this result to obtain large classes of symmetrisable varieties which are not $\K_0$-étale linear, for example, the product of a curve and any $\K_0$-étale linear variety.

\begin{rem}
    For other invariants in motivic homotopy theory, we may obtain obstructions to nice behaviour for varieties which are not $\A^1$-connected. For example, \cite[Definition 2.30]{Kass23-enrichedcount} introduces the notion of a global $\A^1$-degree of a morphism $f: X \to Y$, which descends to an element of $\GW(k)$ only when $Y$ is $\A^1$-connected. 
    Therefore, one might expect $\A^1$-connectedness to be a necessary condition for symmetrisability.
    However, when $k$ is a number field, there exist smooth projective curves of genus $\geq 1$ that are not $\A^1$-connected. It is therefore not true that over a number field $\K_0(\Sym_k)$ is given by the Grothendieck group of (geometrically)-$\A^1$-connected varieties.
\end{rem}

\bibliography{main}
\bibliographystyle{alpha}

\end{document}